\definecolor{webgreen}{rgb}{0,.5,0}
\definecolor{webbrown}{rgb}{.8,0,0}
\definecolor{emphcolor}{rgb}{0.95,0.95,0.95}
\renewcommand{\theequation}{\thesection.\arabic{equation}}
\numberwithin{equation}{section}
\newtheorem {prop}{Proposition}[section]
\newtheorem {lemma}{Lemma}[section]
\newtheorem {cor}{Corollary}[section]
\newtheorem {rem}{Remark}[section]
\newcommand{\R}{\mathbb R}
\newcommand{\N}{\mathbb N}
\renewcommand{\P}{\mathbb P}
\newcommand{\E}{\mathbb E}
\newcommand{\F}{\mathbb F}
\renewcommand{\bar}{\overline}
\renewcommand{\emptyset}{\varnothing}
\newcommand{\Fc}{\mathcal F}
\newcommand{\vP}{\vec{\Pi} }
\newcommand{\vp}{\vec{\pi} }
\newcommand{\vx}{\vec{x} }
\newcommand{\eps}{\varepsilon}
\newcommand{\e}{\mathrm{e}}
\author{Michael Ludkovski}
\address[M.\ Ludkovski]{Department of Statistics and Applied Probability, University of California Santa Barbara, CA 93106-3110}
\email{ludkovski@pstat.ucsb.edu}
\author{Semih O.\ Sezer}
\address[S.\ O.\ Sezer]{School of Engineering and Applied Sciences, Sabanci University, Istanbul}
\email{sezer@sabanciuniv.edu}
\title{Finite Horizon Decision Timing with Partially Observable Poisson Processes}
\keywords{Markov-modulated Poisson processes, Bayesian sequential analysis,
 optimal stopping, decision making}
\subjclass[2000]{Primary 62L10; Secondary 62L15, 62C10, 60G40}
\begin{document}

\begin{abstract}
We study decision timing problems on finite horizon with Poissonian information arrivals. In
our model, a decision maker wishes to optimally time her action in order to maximize her expected reward. The reward depends on an unobservable
Markovian environment, and information about the environment is collected through a (compound)
Poisson observation process. Examples of such systems arise in investment timing, reliability
theory, Bayesian regime detection and technology adoption models.
We solve the problem by studying an optimal stopping problem for a piecewise-deterministic process which gives the posterior likelihoods of the unobservable environment.
Our method lends itself to simple numerical
implementation and we present several illustrative numerical examples.
\end{abstract}

\maketitle

\section{Introduction}\label{sec:intro}

Decision timing under uncertainty is one of the fundamental problems in Operations Research. In a typical setting, an
economic agent (called the decision-maker or DM) has a set of possible actions $\mathcal{A}$ where each action has a (random) reward associated with it.
The objective of the DM is to select a single action and time it so as to maximize her expected reward. More precisely, the DM picks a stopping time $\tau$
and action $k$ from the set $\mathcal{A}$ at $\tau$. The reward $H$ that DM receives is a function of the pair $(\tau, k)$, as well as of some stochastic state variable $Y$. 
In classical examples (e.g.\ investment timing, American option pricing, natural resource management, etc.), $Y$ is an \emph{observable} stochastic process (e.g.\ asset prices, market demand etc.), and the DM's objective is a standard optimal stopping problem.

More complicated stopping problems involving \emph{unobserved} system states have
also been considered in the literature; see, for example, \cite{Bather}, 
\cite{Jensen}, \cite{McCardle85}, \cite{Mazziotto86}, \cite{JensenHsu}, \cite{Stadje-stopping}, \cite{Schottl},  \cite{Fuh02}, \cite{DecampsEtal}, \cite{dayanik-goulding}.
Such models
are especially natural when one wishes to capture the inherent
conflict between gathering of information (which makes waiting valuable) and the time-value of
money (which makes waiting costly). Indeed, most realistic settings involve a DM who is only
partially aware of the environment and must collect data before making a decision. In a
multi-period setting, it is natural to capture this uncertainty in
the environment through an unobservable stochastic process $M \equiv \{ M_t \}_{t \ge 0}$, where $M_t$ represents the state of the world
at time $t$. The DM starts with an initial guess about $M$, collects information via relevant
news, and updates her beliefs. At the time of decision she then receives a reward that depends
on the present environment, $H=H(\tau, k, M_\tau)$.

In such problems, a common approach is to postulate that the process $M$ is a partially observable Markov (decision) process (POMDP), in which case we have a hidden Markov model (HMM). We refer the reader to \cite{BertsekasBookOld}, \cite{ElliottBook} for a comprehensive treatment of discrete-time models and to \cite{BensoussanBook}, \cite{MR2001k:60001a} for continuous-time models and applications.

In both discrete- and continuous-time models the analysis separates the sub-problems of estimation (filtering of $M$) and control. The second ``control'' step requires re-formulating the problem under an equivalent fully observable system, where the conditional distributions/probabilities of the process $M$ constitute the new state variables. In discrete-time, the value function is typically a fixed point of the corresponding dynamic programming (DP) operator, and can be obtained via a recursive application of this operator; see, for example, the models and algorithms in \cite{BertsekasBookOld}, 
\cite{LovejoySurvey91} 
On the other hand, continuous-time formulations allow more sophisticated models, and the dynamic programming principle generally manifests itself in the form of a (partial) differential (delay) equation; see 
\cite{Friedman},
\cite{LenhartLiao}, 
 \cite{BensoussanBook}, 
 \cite[Chapter 6]{Peskir-Shiryaev-book} and the references therein for various examples.

The major distinction between discrete- and continuous-time models comes from the nature of the control and the observations; that is, is the system \emph{asynchronous} and observations/stopping can occur anytime, or are there fixed time epochs when new information is processed and stopping decisions are made. A similar distinction exists within  continuous-time models. If news (such as changes in asset prices) arrive in infinitesimal amounts, then it is intuitive to have a continuum of information, which is typically captured by the filtration of an observed diffusion process.  However, in many instances, a more realistic representation is to use ``discrete''
information amounts. Corporate developments, engineering failures, insurance claims, and
economic surveys are all discrete events and the corresponding news arrive in ``chunks''. Note
that discreteness of information is distinct from the discreteness of time. The model is  still
in continuous-time, since the events may take place at any instance. However, the event itself
carries a strictly positive amount of information. Moreover, ``no news'' is still informative
and affects the beliefs of the DM.

Mathematically, discrete information in continuous-time may be represented
by the filtration of an {observed} marked point process. 
In such a model, the instantaneous arrival intensity and the distribution of the marks of the point process typically depend on the current state of the process $M$. That is, the observable point process encodes information about the hidden environment $M$ via its arrival times and/or marks. Filtering with continuous-time point process observations has been considered in \cite{bremaud,Arjas92,ElliottMalcolm05}, and it is known that the dynamics of the conditional probabilities of $M$ are of the piecewise deterministic process (PDP) type. In other words, the DM beliefs evolve \emph{deterministically} between arrivals of new information, and experience random jumps at event times. From the control perspective, various aspects of optimal stopping of PDP's have been studied by \cite{LenhartLiao}, \cite{Gugerli} and \cite{CostaDavis88}.

In this paper, we study a class of finite-horizon
decision-making problems within the PDP framework by considering a general regime-switching model with Poisson information arrivals. Poissonian
information allows us to capture the discreteness of news while maintaining a rich framework
for the dependence of the observable $X$ on the unobservable state of $M$, which can manifest itself both in arrival
rate and mark distribution effects. In this context, our main contribution is the full characterization of the value function and optimal policy of the DM, with a direct proof of the dynamic programming principle and characterization of the optimal and $\epsilon$-optimal policies. Our approach also yields a numerical algorithm that can be readily implemented (see Section~\ref{sec:examples} for examples).
Within the PDP framework, related problems have been considered by \cite{JensenHsu} in
connection with system reliability studies, \cite{Jensen97} and \cite{Schottl} in the context
of insurance premium
re-pricing and \cite{PeskirShiryaev}, 
\cite{Gapeev},  \cite{bdk05}, \cite{dps} for classical Poisson disorder and regime detection problems.

Our model provides a non-trivial
generalization of previous analysis of decision making under Poissonian information structures.
More precisely, we extend existing literature in three directions. First, we consider a general
continuous-time finite-state Markov chain for the environment variable $M$ (without any
assumptions on the transition rates), and impose no restriction on the arrival rate and mark
distribution of the observed compound Poisson process $X$. The latter allows us to model any
setting where the DM also gets information via the \emph{size/type} of each event besides the
interarrival epochs. Second, we consider a general discount/cost structure, that can be used
to encode a variety of economic objectives.  Finally, we work
in the  context of finite horizon, where value functions are time-inhomogeneous. This is a more
realistic setting since a practicing DM typically has a well-defined ``window'' for making
their decision. The introduction of time-to-maturity as a state variable makes the numerical
computation more challenging and leads to appearance of new effects that are not possible with
stationary models. At the same time, our model allows a natural interpolation from finite to
infinite horizon; see Section \ref{sec:infinite}.

Before concluding our discussion here,
let us mention that the choice of ``discrete-time model'' versus ``continuous-time model with discrete information'' will be made according to the preferences of the modeler, as well as the nature of the problem. Accordingly, similar applications may invite different modeling approaches; for instance, the machine reliability problem discussed in Section~\ref{sec:catalogue} below was studied both in a discrete-time setting by \cite{Stadje}, a continuous-time setting  by \cite{JensenHsu} and even a hybrid continuous-time model with discrete-epoch observations in \cite{MakisJiang}. In this context, if the machine/production system is subject to major breakdowns, then continuous monitoring may be more desirable. In other cases, end-of-day inspections may be more than enough to restore the profitability of operations. While the aforementioned formulations are superficially similar (and in some specific cases even equivalent, see \cite{Feinberg04}), the respective solution methods utilize quite different tools. The solution of discrete time models generally relies on the Smallwood-Sondik property \cite{SmallwoodSondik} that shows that with finite state, observation, and action spaces the value function is piecewise linear and convex. In continuous-time this property no longer holds, and the smoothness of the value function must be independently established. Also
in discrete-time models decisions and controls are intrinsically paired with observations. In contrast, in the models considered here, the control may take place both at event time or between events, which is an important qualitative distinction.

\subsection{A catalogue of sample problems.}\label{sec:catalogue}
Since the framework studied throughout the paper is general, let us first
provide a number of motivating examples illustrating the applications in various
settings.

\subsubsection*{\textbf{Profit Maximization with Information Cost.}}
Let us consider an insurance company which is planning to launch a new policy/product to its
clients. The frequency of corresponding insurance claims and the severity of claim sizes are
not known precisely. Rather, they depend on the current quality of the insurance portfolio,
represented by a Markov process $ M = \{ M_t \}_{t \ge 0} $ taking values on some 
space $E \triangleq \{ 1, \ldots, n \} $. Once the policy is launched, it yields a random payoff
that depends on the current state of $M$ only. To model this, we say that when $M$ is at state
$i \in E$ at the launch-time, the random payoff is given by an independent random variable
$\Phi_i$ with some finite mean $\mu_i =\E[ \Phi_i]$.

Information about $M$ is obtained through the filed claims process $X = \{ X_t \}_{t \ge 0}$
received by the firm. The cumulative claim process has the form $X_t = \sum_{j=1}^{N_t} Y_j $
for $t \ge 0$. Here $N_t$ is the total number of claims up to time $t$, and $Y_j$ is the size
for the $j$'th claim for $j \in \N$. The process $N$ is a simple Poisson process with intensity
$\lambda_i $ whenever $M$ is at state $i \in E$. Moreover, if a claim is known to occur when
$M$ is at state $i$, the claim size is an independent random variable with distribution
$\nu_i$.

At any time prior to some terminal time $T < \infty$, the company may launch the product or
permanently abandon it. Alternatively, it can delay this decision to obtain more information on
$M$, and to increase the likelihood of catching $M$ at a favorable state. However, waiting for
additional information costs $c \le 0$ per unit time. Therefore, the company must decide how
long it observes $X$ prior to a decision, and what decision (launch vs. quit) should be
taken at that time.

Let $\tau \le T$ denote the decision time, and let the random variable $d \in \{ 0,1 \}$
indicate whether the product is released or abandoned. That is, on the event $\{ d=1 \}$ the
company launches the product, and on $\{ d=0 \}$ it quits. Clearly, the time
$\tau$ should be determined based on the observations from the claim process $X$, and the
choice of action $d$ should be determined solely by the information generated by $X$ until
$\tau$. 
Then, the objective of the company is to compute
\begin{align}
\label{def:R1}
\sup_{\tau , d} \E^{\vp} \left[ \int_0^{\tau} \! e^{- \rho t} c \,  dt + e^{- \rho \tau}  \,
1_{\{ d =1 \}}  \Bigl( \sum_{i \in E} \mu_i \cdot 1_{\{ M_{\tau} =i \}} \Bigr) \right]
\end{align}
over all such pairs $(\tau, d)$. In \eqref{def:R1}, $\rho > 0$ is a given discount rate used by
the company in reference to future revenues, and $\vp \equiv (\pi_1, \ldots , \pi_n ) \triangleq ( \,  \P(M_0 = i), \ldots ,  \P(M_0 = n) \, ) $ denote the initial beliefs of the company about the state of
$M$ at $t=0$.

A related problem has been considered on infinite horizon by \cite{Schottl} who maximizes
future risk reserves of the insurance company where at the time $\tau$ the company will
re-calculate its premiums. We also refer the reader to \cite{DecampsEtal}, and \cite{UluSmith} for
recent work on timing project commitment/abandonment in continuous and discrete time
respectively.

\subsubsection*{\textbf{Bayesian Regime Detection.}}
In this problem, a compound Poisson process $X = \{ X_t \}_{t \ge 0}$ is observed starting from $t=0$. The arrival rate $\lambda$ and mark distribution $\nu$ of $X$ are not known precisely. Rather they
depend on the static regime of the Markov process $M $ with $n$ absorbing states (i.e., $M_t
= M_0$ for all $t \ge 0$). Each state corresponds to the realization of one of the $n$ simple
hypotheses
\begin{align}
  \label{hypotheses}
  H_1 : ( \lambda, \nu) = (  \lambda_1, \nu_1), \quad
  \ldots\ldots \quad , \quad H_n : (\lambda, \nu) = (  \lambda_n,
  \nu_n),
\end{align}
with given prior likelihoods $\pi_i$, for $i = 1,\ldots,n$. The objective of the DM is to recognize the current
regime as quickly as possible, with minimal probability of wrong decision.

In earlier work on this problem, the trade-off between observing and stopping is generally modeled via the
Bayes risk
\begin{align}
\label{def:R3}
\E^{\vp} \left[
\tau + \sum_{k,i =1}^n \mu_{k,i} 1_{ \{ d = k , M_{0 } =i \} }  \right],
\end{align}
where $\tau$ is the decision time, $d \in \{ 1,\ldots, n\}$ represents the hypothesis selected
and $\mu_{k, i} \ge 0$ is the cost of selecting the wrong hypothesis $H_k$ when the correct one
is $H_i$.  The DM then needs to minimize \eqref{def:R3} and find a pair $(\tau ,
d)$, if one exists, that attains this infimum.

The infinite horizon version of \eqref{def:R3} was solved for the first time by
\cite{PeskirShiryaev} for a simple Poisson process with $n=2$. Later, \cite{Gapeev} provided
the solution (again with $n=2$), where the jump size is exponentially distributed under each
hypothesis, with the mean of the exponential distribution the same as the proposed arrival rate.
The solution for any jump distribution
and for $n \in \N$ was recently provided by \cite{dps}. Our model in this paper can be viewed
as the finite horizon version of that problem, where a decision must be made before a terminal
time $T<\infty$.

\subsubsection*{\textbf{Optimal Replacement Time of a Reliability System.}}
\cite{JensenHsu} consider an optimal stopping problem in reliability with a partially observed
Poisson process. The problem is to find when to discard or replace a machine/production-system
whose production quality deteriorates over time due to the usual wear-and-tear. The status of
the machine is modeled with a finite state Markov process $M$. The process moves from good
states to bad states over time. Eventually it ends in the $n$'th absorbing state which
represents an unacceptable quality level.

The DM observes the failure times $\sigma_1, \sigma_2, \ldots$ (the failures can also be
interpreted as defective items in the context of a machine); it is assumed that the
corresponding ``arrivals'' form a Poisson process whose intensity is $\lambda_i$ when the
current state of the process $M$ is $i \in E = \{ 1, \ldots, n\}$. Running the system in state
$i$ yields a net payoff $c_i \in \R$ per unit time. A high $c_i$ indicates that the machine is
profitable, while a negative $c_i$, including the assumed $c_n < 0$, means that the low quality
outweighs the benefits. At any time the DM can stop running the machine and replace it, with a
terminal cost of $\mu_i$ if the process $M$ happens to be in state $i \in E $ at that time.
\cite{JensenHsu} then solve the problem of maximizing
\begin{align}
\label{def:objective-function-in-reliability}
\E^{\vp} \left[  \int_0^{\tau}  \sum_{i \in E} c_i \, 1_{\{ M_{t} =i \}} dt
+   \sum_{i \in E} \mu_i \cdot 1_{\{ M_{\tau} =i \}}  \right],
\end{align}
over all random time $\tau$'s (whose value is determined by the history generated by the
arrival process) and under certain assumptions on the arrival rates $\lambda_i$'s, the
infinitesimal generator of $M$, and cost parameters $c_i, \mu_i$'s. Related models have
appeared in \cite{MakisJiang}, and \cite{Stadje} and go all the way to classical POMDP work by \cite{SmallwoodSondik}. In this paper, we consider that problem without any
parameter assumptions and with the additional finite horizon constraint $\tau \le T$.

\subsection{Problem description: a unifying framework.}
In the examples above, a DM observes a compound Poisson process $X$ with arrival rate $\lambda$, and mark/jump
distribution $\nu$. The local characteristics $(\lambda, \nu)$ of $X$
are determined by the current state of an \emph{unobservable} finite-state Markov process $M$.

At any time $\tau$ less than some $T< \infty$, the DM can stop and select an action $k$ from
the set $ \mathcal{A} \triangleq \{  1, \ldots, a \}$. If action $ k \in \mathcal{A}$ is taken,
this yields a terminal reward/payoff of
\begin{align*}
  \sum_{i \in E} \mu_{k,i} \cdot 1_{\{ M_{\tau} =i \}}
\end{align*}
as a function of the unobservable state of $M$. Here, $\mu_{k,i} $ is a given finite (not
necessarily positive) number. One can also interpret $\mu_{k,i}$ as the expected value of an
independent random variable $\Phi_{k,i}$ representing the uncertain payoff of taking action $k$
when $M_t =i$.
Also note
that if there is a time-lag between the decision and its realization, and if this delay is
independent, then $\mu_{k,i}$ can be assumed to be the expected discounted value
of this payoff.

The DM may alternatively delay her decision and continue to observe the process $X$ in order to
collect more information, or in order to stop later when $M$ appears to be in a better state.
Delaying the decision carries associated costs (rewards) due to the cost of observation or lost
opportunity (or operating revenues). We allow these terms to depend on $M$ and we assume that
an amount with present value
\begin{align*}
\int_0^{\tau} e^{- \rho t }  \left( \sum_{i \in E }  c_i 1_{ \{ M_{t} = i  \} } \right) dt
\end{align*}
is accumulated until the decision time $\tau$. Here $\rho \ge 0$ is the discount factor, and
$c_i$ is the instantaneous cost or revenue of running the system 
when $M$ is at state $i \in E$. We allow $\rho$ to be zero. This makes the formulation
suitable for non-financial application where the quality of the decision is more important than
its timing.

In this setup, the objective of the DM is to find an \emph{admissible} strategy that will
maximize her total expected reward and resolve the trade-off between \emph{exploring} (getting
more observations) and \emph{exploiting} (engaging in an action). An admissible strategy is a
pair $(\tau,d)$, where $\tau \le T$ is the decision time and $d \in A$ is the action selected
at this time. Since the DM collects information from observing $X$, the value of $\tau$ should
be determined by the information generated by $X$, namely $\tau$ must be a stopping time of the
filtration $\mathcal{F}^X$ of $X$. Also, the decision variable $d$ should be measurable with respect to the information $\mathcal{F}_{\tau}^X$ revealed by $X$ until $\tau$.
Let $\vp = (\pi_1, \ldots, \pi_n) \triangleq
\left( \P(M_0 = 1), \ldots, \P(M_0 = n) \right)$ be the initial (prior) beliefs of the DM about
$M$ and $\P^{\vp}$ the corresponding conditional probability law. Then the objective of the DM
is to compute
\begin{align}
\label{def:U}
U (T, \vp) \triangleq
 \sup_{ \tau \le T  ,\, d \in \Fc^X_{\tau}}
\E^{\vp} \left[  \int_0^{\tau} e^{- \rho t }   \left( \sum_{i \in E} c_i 1_{ \{  M_{t} = i  \} } \right) dt
+ e^{- \rho \tau}  \sum_{ k \in \mathcal{A} }  1_{\{  d =k \}}  \Biggl( \sum_{i \in E} \mu_{k,i} \cdot 1_{\{ M_{\tau} =i \}} \Biggr) \right] ,
\end{align}
and, if it exists, find an admissible pair $(\tau,d)$ attaining this value.

In Section \ref{sec:probStat} below we describe the formal setting of our model and show that the problem in \eqref{def:U} is equivalent to an optimal stopping problem in terms of the \emph{conditional probability process}, which is a piecewise deterministic process.
Section~\ref{sec:sequential} describes how the value function of this stopping problem can be computed  via a sequential procedure.
The results of Section~\ref{sec:sequential} are used in Section~\ref{sec:solution} in order to identify an optimal strategy and describe its properties.
Following this, Section~\ref{sec:otherCosts} explores alternative
objective functions that can be employed in our framework. 
Finally, in Section~\ref{sec:examples} we give numerical examples illustrating our results.
Most of the proofs are delegated to the Appendices at the end.

\section{Problem Statement}\label{sec:probStat}
\subsection{Model.}
Let $(\Omega, \mathcal{H}, \P)$ be a probability space hosting a continuous-time Markov process
$M$ taking values on $E \triangleq \{ 1, \ldots, n \}$, for $n \in \mathbb{N}$, and with infinitesimal generator
$Q=(q_{ij})_{i,j \in E }$. 
Also, we have a collection of independent compound Poisson
processes $X^{(1)} , \ldots, X^{(n)} $ with local parameters $ (\lambda_1, \nu_1), \ldots ,
(\lambda_n , \nu_n)$ respectively.
In terms of these independent processes, we define the observation process 
\begin{align}
\label{def:X}
X_t \triangleq X_0 + \int_{(0,t] }  \sum_{ i \in E} 1_{ \{ M_s =i \} } \, dX^{(i)}_s , \qquad t\ge 0,
\end{align}
which is a Markov-modulated Poisson process, also called a Cox process
(see \cite{Cox-Isham}).
In the remainder, we let $\sigma_0, \sigma_1 , \ldots$ denote the arrival times of the process
$X$:
\begin{align*}
\sigma_m \triangleq \inf \{ t > \sigma_{m-1} : X_t \ne X_{t- }\}, \qquad m \ge 1, \qquad \text{with $\sigma_0 \equiv 0$,}
\end{align*}
and the variables $Y_1, Y_2 ,\ldots$ denote $\R^d$-valued marks observed at these arrival times:
\begin{align*}
Y_m = X_{\sigma_m } - X_{\sigma_m- }, \qquad m \ge 1.
\end{align*}
Finally, to compute relative likelihoods of different marks, we introduce the total measure
$\nu$ defined as $\nu \triangleq  \nu_{1} + \ldots + \nu_n $, and we let $f_i(\cdot)$ be the
density of $\nu_i$ with respect to $\nu$.

\subsection{Conditional probability process.}
For a point in $D\triangleq \{ \vp \in \R_+^{n} : \pi_1 + \ldots + \pi_n =1 \}$, let $\P^{\vp}$ denote the probability measure (with the expectation operator $\E^{\vp}$) under which $M$ has initial distribution $\vp$. Moreover, let $\F \triangleq \{ \Fc^X_t \}_{t \ge 0} $ be the
filtration of the process $X$ in \eqref{def:X}. With this notation, we
define the $D$-valued \emph{conditional probability process} $\vP_t \triangleq \left(
\Pi_t^{(1)}, \ldots , \Pi_t^{(n)} \right)$ such that
\begin{align}
\label{def:Pi-i}
 \Pi^{(i)}_t = \P^{\vp} \{  M_t =i | \Fc^X_t \}, \quad \text{for $i \in E$, and $t \ge 0$}.
\end{align}
The process $\vP$ is clearly adapted to $\F$, and each component gives the conditional probability that the current state of $M$ is $\{
i\}$ given the information generated by $X$ until the current time $t$. Moreover, using standard arguments
as in \cite[pp. 166-167]{Sh78}, and
\cite[Proof of Proposition 2.1]{dps}, it can be shown that the problem in \eqref{def:U} is equivalent to a fully observed optimal stopping problem with the process $\vP$ as the new hyperstate.
More precisely,
the value function $U$ in (\ref{def:U}) can be written as
\begin{align}
\label{def:V}
 U(T, \vp) = V(T, \vp) \triangleq \sup_{ \tau \le T  } \E^{\vp} \left[   \int_0^{\tau} e^{- \rho t }  \, C(\vP_t ) dt +
  e^{- \rho \tau} H( \vP_{\tau})   \right] ,
\end{align}
in terms of the functions
\begin{align}
\label{def:h}
C(\vp) \triangleq  \sum_{i \in E }  c_i \pi_i \qquad \text{and} \qquad
H( \vp ) \triangleq \max_{k \in \mathcal{A} } H_k(\vp), \qquad \text{where} \qquad  H_k(\vp) \triangleq \sum_{i \in E} \mu_{k,i} \pi_i.
\end{align}
If there is a stopping time $\tau^*$ attaining the supremum in \eqref{def:V}, then the admissible strategy $( \tau^*, d(\tau^*) )$ is an optimal rule for the problem in \eqref{def:U} if we define
\begin{align}
\label{def:d-tau}
d(\tau) \in \arg\max_{k \in  \mathcal{A} } H_k(\vP_{\tau})  .
\end{align}

\subsection{Sample paths of $\vP$.}
Let us take a sample path of the observations process $X$, in which $m$-many arrivals are observed on
$[0,t]$. Let $(t_k)_{k \le m}$ denote those arrival times. If we know that the process $M$ stays
at the state $\{i\}$ without any transition, then the (conditional) likelihood of this path would be written
as $\P^{\vp} \{ \sigma_k \in dt_k , Y_k \in dy_k \, ; \, k\le m \, | \,M_s =i , s\le t \} = $
$$ [ \lambda_i e^{-\lambda_i t_1 } dt_1]
\cdots [\lambda_i e^{-\lambda_i (t_m - t_{m-1}) } dt_m]   e^{-\lambda_i (t_m - t_{m-1}) }  \prod_{k =1}^m [  f_i (y_k ) \nu(dy_k)]
= e^{-\lambda_i t } \prod_{k =1}^m  \lambda_i  dt_k  \cdot f_i (y_k ) \nu(dy_k).
$$ 
By construction, the observation process $X$ has independent increments conditioned on $M = \{ M_t\}_{t \ge 0}$.
Therefore,
we have 
\begin{multline}
\label{path-likelihood-given-M}
1_{\{ M_t =i\}} \cdot \P^{\vp} \Big\{ \sigma_i \in dt_i , Y_i \in dy_i \, ; \, i\le m \, \Big| \, M_s; s \le t \Big\} \\
= 1_{\{ M_t =i\}} \cdot \exp{ \left( - \int_0^t \sum_{i=1}^n \lambda_i  1_{ \{ M_{t_k} =i \} }  ds  \right) } \cdot  \prod_{k=1}^m \left( \sum_{j \in E}  1_{\{ M_{t_k} =j \}} [ \lambda_j  dt_k \cdot f_i (y_k ) \nu(dy_k)] \right).
\end{multline}
By taking the expectations of the expressions above, we obtain the unconditional likelihoods, in terms of which we give an explicit representation for the process $\vP$ in Lemma~\ref{lemm:vP-explicit} below.

\begin{lemma}
\label{lemm:vP-explicit}
For $i \in E$, let us define
\begin{align}
\label{def:L}
L_i^{\vp} ( t, m : (t_k, y_k ), k \le m)
\triangleq
\E^{\vp} \left[
1_{\{ M_t =i\}} \cdot e^{ - I(t) } \cdot  \prod_{k=1}^m \ell(t_k, y_k)
\right],
\end{align}
where
\begin{align}
\label{def:I} I(t) \triangleq \int_0^t \sum_{i=1}^n \lambda_i  1_{ \{ M_s =i \} }  \, ds \quad
\text{\normalfont and} \quad \ell(t, y) \triangleq \sum_{j \in E}  1_{\{ M_{t} =j \}} \lambda_j
\cdot f_j (y ) .
\end{align}
Also, let $L^{\vp} ( t, m : (t_k, y_k ), k \le m) \triangleq \sum_{j \in E} L_j^{\vp} ( t, m : (t_k, y_k ), k \le m)$. Then we have
\begin{align}
\label{vP-explicit}
\Pi^{(i)}_t &= \frac{L_i^{\vp} ( t, N_t : (\sigma_k, Y_k ), k \le N_t) }{ L^{\vp} ( t, N_t : (\sigma_k, Y_k ), k \le N_t)}
\equiv \left[  \frac{L_i^{\vp} ( t, m : (t_k, y_k ), k \le m) }{ L^{\vp} ( t, m : (t_k, y_k ), k \le m)}
\right]\Bigg|_{ m = N_t \, ;  \,  ( t_k=\sigma_k, y_k = Y_k )_{ k \le m} } ,
\end{align}
$ \P^{\vp}$-a.s., for all $t\ge 0$, and for $i  \in E$.
\end{lemma}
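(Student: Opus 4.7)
The plan is to verify, directly from the definition of conditional expectation, that the ratio on the right-hand side of \eqref{vP-explicit} is a version of $\P^{\vp}\{M_t=i\mid \Fc^X_t\}$. Concretely, I would show that for every bounded $\Fc^X_t$-measurable random variable $\Psi$,
\begin{align*}
\E^{\vp}\bigl[1_{\{M_t=i\}}\,\Psi\bigr]
=\E^{\vp}\!\left[\Psi\cdot\frac{L_i^{\vp}(t,N_t:(\sigma_k,Y_k),k\le N_t)}{L^{\vp}(t,N_t:(\sigma_k,Y_k),k\le N_t)}\right].
\end{align*}
Since both sides coincide on a generating class, this identifies the ratio as the desired conditional probability. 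The representation of $\Fc^X_t$ as generated by $N_t$ together with the ordered arrival times and marks up to $t$ means that it is enough to consider test variables of the form $\Psi=1_{\{N_t=m\}}\,\psi(\sigma_1,Y_1,\ldots,\sigma_m,Y_m)$, where $\psi$ is a bounded Borel function on $(\R_+\times\R^d)^m$ supported in the simplex $\Delta_m\triangleq\{0<t_1<\cdots<t_m\le t\}$.

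The next step is to compute $\E^{\vp}\bigl[1_{\{M_t=i,\,N_t=m\}}\Psi\bigr]$ by conditioning on the full path $\{M_s\}_{s\le t}$. Using the identity \eqref{path-likelihood-given-M} for the conditional joint law of $(\sigma_1,Y_1,\ldots,\sigma_m,Y_m,\{N_t=m\})$ given $\{M_s\}$ (which follows from the construction \eqref{def:X}, so that conditioned on $M$, $X$ is an inhomogeneous compound Poisson process with rate $\lambda_{M_s}$ and mark density $f_{M_s}$ relative to $\nu$), I obtain
\begin{align*}
\E^{\vp}\!\bigl[1_{\{M_t=i,\,N_t=m\}}\,\psi\,\big|\,\Fc^M\bigr]
=1_{\{M_t=i\}}\!\!\int_{\Delta_m\times(\R^d)^m}\!\!\!\psi(t_k,y_k)_{k\le m}\,e^{-I(t)}\prod_{k=1}^m\ell(t_k,y_k)\,dt_k\,\nu(dy_k),
\end{align*}
with $I$ and $\ell$ as in \eqref{def:I}. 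Taking the outer $\E^{\vp}$ and recognising the definition \eqref{def:L} of $L_i^{\vp}$ yields
\begin{align*}
\E^{\vp}\!\bigl[1_{\{M_t=i,\,N_t=m\}}\,\psi\bigr]=\int_{\Delta_m\times(\R^d)^m}\!\!\psi(t_k,y_k)_{k\le m}\,L_i^{\vp}(t,m:(t_k,y_k))\,dt_k\,\nu(dy_k).
\end{align*}
Summing over $i\in E$ and then over $m\ge 0$ identifies the joint reference law of $(N_t,(\sigma_k,Y_k)_{k\le N_t})$ under $\P^{\vp}$ as the measure with density $L^{\vp}$, so the same integral with $L^{\vp}$ in place of $L_i^{\vp}$ equals $\E^{\vp}[1_{\{N_t=m\}}\psi]$.

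Putting the two identities together, I can rewrite
\begin{align*}
\E^{\vp}\bigl[1_{\{M_t=i\}}1_{\{N_t=m\}}\psi\bigr]
=\int_{\Delta_m\times(\R^d)^m}\!\!\psi\cdot\frac{L_i^{\vp}}{L^{\vp}}\cdot L^{\vp}\,dt_k\,\nu(dy_k)
=\E^{\vp}\!\left[1_{\{N_t=m\}}\psi\cdot\frac{L_i^{\vp}}{L^{\vp}}\right],
\end{align*}
which is the required characterising identity on the generators of $\Fc^X_t$. A monotone class argument extends this to all bounded $\Fc^X_t$-measurable $\Psi$, and by \eqref{def:Pi-i} this identifies the ratio with $\Pi^{(i)}_t$, $\P^{\vp}$-a.s. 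The main technical point I expect to require care is the well-definedness of the ratio: one must verify that $L^{\vp}(t,N_t:(\sigma_k,Y_k),k\le N_t)>0$ for $\P^{\vp}$-a.a.\ observed paths (so that the ratio is unambiguously defined). This follows because the event $\{L^{\vp}=0\}$ is itself $\Fc^X_t$-measurable and its $\P^{\vp}$-probability, computed with the same reference-measure argument, reduces to the integral of $L^{\vp}$ over a region where it vanishes, hence is zero. Beyond this minor subtlety the argument is bookkeeping; the real content is the disintegration formula \eqref{path-likelihood-given-M} supplied by the problem formulation.
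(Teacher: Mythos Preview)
Your proposal is correct and follows essentially the same route as the paper: both verify the defining property of the conditional expectation on a generating class of $\Fc^X_t$ by first conditioning on the full path of $M$, invoking the explicit conditional likelihood \eqref{path-likelihood-given-M}, and then applying Fubini to pass between the $\E^{\vp}$-expectation and the integral over arrival data. The only cosmetic difference is the choice of generators---the paper uses cylinder sets of the form $\{N_{t_1}=m_1,\ldots,N_{t_k}=m_k;\,(Y_1,\ldots,Y_{m_k})\in B\}$, whereas you work directly with $\{N_t=m\}$ together with a bounded Borel function of $(\sigma_k,Y_k)_{k\le m}$; both describe the same $\sigma$-algebra, and your remark on the a.s.\ positivity of $L^{\vp}$ is a useful addition that the paper leaves implicit.
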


\begin{figure}
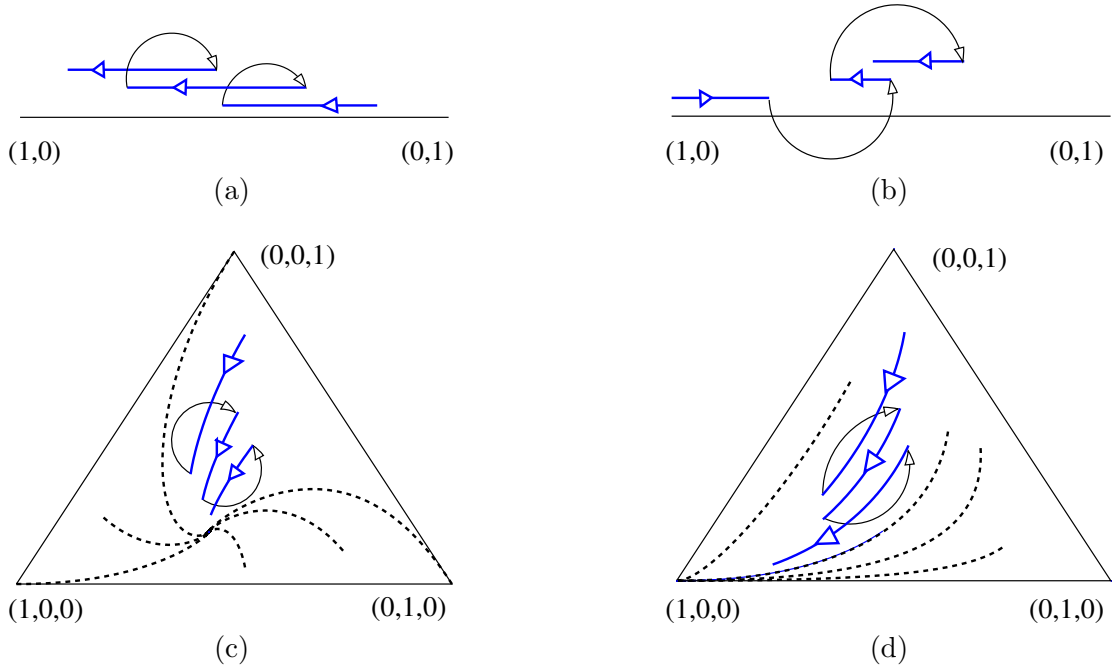

  \begin{center}
  \begin{tabular*}{0.9\textwidth}
     {@{\extracolsep{\fill}}cc}
   \includegraphics[width= 0.36\textwidth]{ex01-edited.mps}  &
     \includegraphics[width= 0.36\textwidth]{ex02-edited.mps} \\
 (a) & (b)\\
 &\\
   \includegraphics[width= 0.36\textwidth]{ex1-edited.mps}  &
     \includegraphics[width= 0.36\textwidth]{ex5-edited.mps} \\
      (c) & (d)
  \end{tabular*}
  \end{center}
  \small{
  \emph{\caption{
  Sample paths of the process $\vP$ for different examples. Solid lines represent actual sample paths. Dashed lines in panels (c) and (d) are the deterministic parts in \eqref{eq:x-i}. In panels (a) and (b), there are two hidden states, and in panels (c) and (d), there are three. In each example, jumps of the process $X$ are always of unit size. The parameters of each example:}
   \begin{align*}
   Q_a = \begin{pmatrix} 0 & 0  \\ 0 & 0 \end{pmatrix} ,
  \quad
   Q_b = \begin{pmatrix} -1 & 1  \\ 1 & -1 \end{pmatrix} ,
  \quad
  Q_c = \begin{pmatrix} -1 & 1  & 0 \\ 0 & -1 & 1 \\ 1 & 0 & 1 \end{pmatrix} ,
  \quad
  Q_d = \begin{pmatrix} 0 & 0  & 0 \\ 0 & 0 & 0 \\ 0 & 0 & 0 \end{pmatrix}
   \end{align*}
  with
$ \vec{\lambda}_a = [1 , 2], \,
 \vec{\lambda}_b = [1 , 4],  \,
  \vec{\lambda}_c = [1 , 2 , 3 ], \,
  \vec{\lambda}_d = [1 , 3 , 5 ]$.
     }}
     \label{fig:sample-paths} 
\end{figure}

Lemma~\ref{lemm:vP-explicit} indicates that the conditional probability of $M_t $ being in
state $i$ is simply the (unconditional) relative likelihood of the observed path until $t$ on the event $\{ M_t =i \}$. Using the explicit form in \eqref{vP-explicit}, we describe the behavior of the sample paths of $\vP$ in Remark~\ref{rem:sample-paths-of-pi} below. 

\begin{rem}
\label{rem:sample-paths-of-pi}
The process $\vP$ has piecewise-deterministic sample paths: between two arrival times 
of $X$, it moves deterministically, and at an arrival time, it jumps from one point to another depending on the observed mark size (see Figure~\ref{fig:sample-paths}). In precise terms, the sample paths have the characterization
     \begin{align}\label{eq:rel-pi-x} \left\{
     \begin{aligned}
    \vP(t)&=  \vx \left(t-\sigma_m,\vP({\sigma_m})\right), \qquad
\qquad \sigma_m \leq t< \sigma_{m+1}, \;\; m\in \mathbb{N} \qquad \\
\vP(\sigma_m)&=
\left(
\frac{  \lambda_1 f_1(Y_m) \Pi_1(\sigma_m-) }{ \sum_{j \in E} \lambda_j f_j(Y_m) \Pi_j(\sigma_m-) }, \ldots,
\frac{  \lambda_n f_n(Y_m) \Pi_n(\sigma_m-) }{ \sum_{j \in E} \lambda_j f_j(Y_m) \Pi_j(\sigma_m-) }\right)
\end{aligned} \right\},
\end{align}
where $\vx (t, \vp) \equiv (x_1(t, \vp), \ldots , x_n(t, \vp))$ is defined as
\begin{align}
\label{eq:x-i}
x_i(t, \vp) \triangleq \frac{    \P^{\vp}  \{ \sigma_1 > t , M_t =i  \}  }
 {   \P^{\vp}  \{ \sigma_1 > t  \} }
=\frac{  \E^{\vp} \left[
1_{\{ M_{t} =i\}} \cdot e^{ - I(t)}   \right]  }
{\E^{ \vp } \left[   e^{ - I(t)}   \right] }
 , \qquad \text{for $i \in E$,}
\end{align}
and satisfy 
the semigroup property $\vx(t+u , \vp ) = \vx (u , \vx(t,\vp) )$, for $t, u \ge 0$.
\end{rem}

The $i$'th component $x_i(\cdot, \cdot)$ indicates how likely it is to have a period of $[0,t]$ without any arrival on the event $\{ M_t =i\}$, as expected. Moreover, for $0 \le u_1 \le u_2 \le \ldots \le u_k$ and for a bounded function $g(\cdot)$, we have
\begin{multline}
\label{eq:Markov-justification}
\E^{\vp} \Bigl[ g( X_{t+u_1} - X_t , \cdots, X_{t+u_k} - X_t ) \big| \Fc^X_t \Bigr]\\
\begin{aligned}
&= \sum_{j \in E} \P \{ M_t =j \big| \Fc^X_t  \}
\cdot \E^{\vp} \Bigl[ g( X_{t+u_1} - X_t , \cdots, X_{t+u_k} - X_t ) \big| \Fc^X_t , M_t =j\Bigr]
\\
&= \sum_{j \in E} \Pi_{j}(t)
\cdot \E \Bigl[ g( X_{u_1} , \cdots, X_{u_k}  ) \big|  M_0 =j\Bigr]
= \mathbb{E}^{\vP_t}  \Bigl[  g( X_{u_1} , \cdots, X_{u_k} )  \Bigr] ,
\end{aligned}
\end{multline}
where the first equality in the last line follows from the construction of the process $X$ in
\eqref{def:X}. The equation \eqref{eq:Markov-justification} together with the characterization
in (\ref{eq:rel-pi-x}) implies that $\vP$ is a $(\P^{\vp}, \F)$-Markov process for every $\vp \in D$.

\begin{cor} \normalfont
\label{cor:vP-dyn}
Using infinitesimal last step analysis, it can be shown
(see, for example, \cite[page 416]{DarrochMorris}, and \cite[Chapter 6.7]{KarlinTaylor0})
that the vector
\begin{align}\label{def:m}
\vec{m} (t , \vp ) \equiv ( m_1 (t , \vp ), \ldots , m_n (t , \vp ) ) \triangleq \Bigl( \,  \E^{\vp} \left[ 1_{\{ M_{t} =1\}} \cdot e^{ - I(u)}   \right]  , \ldots ,
  \E^{\vp} \left[ 1_{\{ M_{t} =n\}} \cdot e^{ - I(u)}   \right] \, \Bigr)
\end{align}
has the form 
$
\vec{m} (t , \vp ) = \vp \cdot e^{ t( Q - \Lambda) }
$ 
where $\Lambda$ is the $n \times n$ diagonal matrix with $\Lambda_{i,i}  = \lambda_i$, and 
the components of $\vec{m} (t , \vp )$ solve $ d m_i (t, \vp ) / dt  = - \lambda_i m_i (t,\vp )
+ \sum_{j \in E}  m_j (t,\vp) \cdot q_{j, i}$. Then together with the chain rule and
(\ref{eq:x-i}) we obtain
\begin{align}
\label{eq:vx-dyn}
\frac{d x_i (t,\vp) }{dt} = \left( \sum_j^n q_{j,i}  x_j (t,\vp) - \lambda_i  x_i (t,\vp)  + x_i (t,\vp)   \sum_j^n \lambda_{j}  x_j (t,\vp)  \right). 
\end{align}
Hence, the process $\vP$ in \eqref{eq:rel-pi-x} has the dynamics 
\begin{align}
\label{dynamics-of-Pi}
d \Pi_t^{(i)} =   \left( \sum_j^n q_{j,i}  \Pi_{t-}^{(j)} - \lambda_i  \Pi_{t-}^{(i)}  + \Pi_{t-}^{(i)}   \sum_j^n \lambda_{j}  \Pi_{t-}^{(j)}  \right) dt +
\int_{\R^d } \left[ \frac{  \lambda_i f_i(y) \Pi^{(i)}_{t-} }{ \sum_{j \in E} \lambda_j f_j(y) \Pi^{(j)}_{t-} }   - 1 \right] p (dt , dy), \quad i \in E,
\end{align}
where $p (\cdot , \cdot)$ is the point process generated by $X$; that is
\begin{align*}
p \left(  (0,t] \times  B \right) = \sum_{i \in \N } 1_{  (0,t] \times  B  } (\sigma_i , Y_i ),
\qquad \text{for every Borel set $ B \in \mathcal{B} (\R^d)$ and $t \ge 0$}.
\end{align*}
\end{cor}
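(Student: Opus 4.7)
\medskip

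The plan is to organize the proof in three stages: (i) establish the matrix exponential form of $\vec{m}(t,\vp)$ via a semigroup argument, (ii) deduce the ODE for $x_i$ from the quotient representation in \eqref{eq:x-i}, and (iii) assemble the SDE \eqref{dynamics-of-Pi} by combining the deterministic dynamics between arrivals with the jump formula from Remark~\ref{rem:sample-paths-of-pi}.

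For step (i), observe that $m_i(t,\vp)$ is linear in $\vp$: since $M$ starts with distribution $\vp$, we have $m_i(t,\vp)=\sum_{j\in E}\pi_j\,M_{j,i}(t)$ with $M_{j,i}(t)\triangleq \E^{\delta_j}[1_{\{M_t=i\}}\,e^{-I(t)}]$. I would verify the semigroup identity $M(t+s)=M(t)\,M(s)$ by inserting $\sum_k 1_{\{M_t=k\}}$, splitting the exponential $e^{-I(t+s)}=e^{-I(t)}\cdot e^{-(I(t+s)-I(t))}$, and applying the strong Markov property of $M$ at time $t$. To identify the generator, I would perform an infinitesimal analysis for small $h>0$: conditioning on whether $M$ jumps in $[0,h]$, one obtains $M_{j,j}(h)=1+(q_{jj}-\lambda_j)h+o(h)$ and $M_{j,i}(h)=q_{j,i}h+o(h)$ for $i\neq j$, so that the generator is exactly $Q-\Lambda$ and $M(t)=e^{t(Q-\Lambda)}$. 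Differentiating $m_i(t,\vp)=\bigl(\vp\,e^{t(Q-\Lambda)}\bigr)_i$ then immediately yields $dm_i/dt=\sum_{j}m_j\,q_{j,i}-\lambda_i m_i$.

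For step (ii), let $S(t)\triangleq \sum_i m_i(t,\vp)$, so that $x_i(t,\vp)=m_i(t,\vp)/S(t)$. Summing the ODE over $i$ and using $\sum_i q_{j,i}=0$ gives $dS/dt=-\sum_i \lambda_i m_i$. Applying the quotient rule,
\begin{align*}
\frac{dx_i}{dt}=\frac{1}{S}\frac{dm_i}{dt}-\frac{m_i}{S^2}\frac{dS}{dt}=\sum_{j}q_{j,i}x_j-\lambda_i x_i+x_i\sum_{j}\lambda_j x_j,
\end{align*}
which is precisely \eqref{eq:vx-dyn}. The semigroup property $\vx(t+u,\vp)=\vx(u,\vx(t,\vp))$ follows from $M(t+s)=M(t)M(s)$ together with the elementary observation that renormalizing a linear map preserves its flow property.

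For step (iii), I would combine the deterministic evolution of \eqref{eq:vx-dyn} between arrivals with the jump recursion in \eqref{eq:rel-pi-x}: on the event $\{\sigma_m\le t<\sigma_{m+1}\}$, $\vP_t=\vx(t-\sigma_m,\vP_{\sigma_m})$ obeys the drift term in \eqref{dynamics-of-Pi}, and at an arrival time $\sigma_m$ with mark $Y_m$, the jump is $\Delta\Pi^{(i)}_{\sigma_m}=\frac{\lambda_i f_i(Y_m)\Pi^{(i)}_{\sigma_m-}}{\sum_j \lambda_j f_j(Y_m)\Pi^{(j)}_{\sigma_m-}}-\Pi^{(i)}_{\sigma_m-}$. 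Writing the jumps as an integral against the point process $p(dt,dy)$ generated by $X$ then recovers \eqref{dynamics-of-Pi}.

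The main obstacle is step (i): specifically, the careful infinitesimal accounting that produces the generator $Q-\Lambda$. One must simultaneously track the probability of a chain transition, the probability of \emph{no} Poisson arrival on $[0,h]$, and the weight $e^{-\lambda_j h}$, and collect all $o(h)$ contributions (which include the possibility of multiple jumps or a jump-and-arrival in $[0,h]$). The remaining steps are largely bookkeeping once the matrix form $\vec{m}(t,\vp)=\vp\,e^{t(Q-\Lambda)}$ and the ODE for $m_i$ are in hand.
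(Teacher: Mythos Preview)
Your proposal is correct and follows exactly the approach sketched in the paper: the corollary itself is stated as a narrative argument (infinitesimal last-step analysis for $\vec{m}$, citing \cite{DarrochMorris} and \cite{KarlinTaylor0}, then the chain/quotient rule for $x_i$, then reading off the SDE from \eqref{eq:rel-pi-x}), and you have simply filled in the details the paper leaves to the references. Your step-by-step expansion---the semigroup identity for $M(t)$, the $o(h)$ accounting yielding generator $Q-\Lambda$, the quotient-rule computation with $S(t)=\sum_i m_i$, and the piecewise assembly of drift and jumps---is precisely what the paper's sketch intends.
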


\section{Constructing the Value Function}
\label{sec:sequential}
The characterization of the sample paths in \eqref{dynamics-of-Pi}
and general theory of optimal stopping (see, for example, \cite{BensoussanBook,LenhartLiao}) imply that
the free-boundary problem associated with the optimal stopping problem in \eqref{def:V} has the form
\begin{align}
\label{eq:free-boundary-problem}
 \max \bigl\{  ( - \rho  + \mathcal{L} ) V(s, \vp) + C(\vp) \, ; \,  H(\vp) - V(s, \vp)  \bigr\}=0,
\end{align}
in terms of the infinitesimal generator
\begin{multline*}
  \mathcal{L}V (s, \vec{\pi}) =
\frac{\partial V( s, \vec{\pi} )}{\partial s}  +
 \sum_{i\in E}  \left( \sum_{j \in E}  q_{j,i}  \pi_j -\lambda_i \pi_i + \pi_i \sum_{j \in E } \lambda_j
\pi_j \right) \frac{\partial V(s,  \vec{\pi} )}{\partial \pi_i}  \\
 +
 \int_{y \in \R^d} \left[
 V\left( s, \frac{\lambda_1 \, \pi_1 \, f_1(y)}{ \sum_{j\in E} \lambda_j  \, \pi_j \,  f_j(y)},
 \ldots,  \frac{\lambda_n  \, \pi_n \, f_n(y)}{ \sum_{j \in E} \lambda_j \, \pi_j \, f_j(y)} \right) - V(s, \vec{\pi})
  \right] \sum_{i \in E} \pi_i \, \lambda_i \, \nu_i (dy),
\end{multline*}
of the process $\vP$. 
The infinitesimal generator $\mathcal{L}$ is a partial differential-difference operator on $[0,T] \times D \subset \R^{n+1}$. Hence, solving the equation $( - \rho  + \mathcal{L} ) V(s, \vp) + C(\vp)=0$ and determining the boundary of the region $\{ \vp \in D : V(T, \vp) = H(\vp) \}$ is not easy even when $n=2$; see, for example, \cite{PeskirShiryaev} who solve free-boundary problems similar to \eqref{eq:free-boundary-problem} for infinite horizon problems, and with $n=2$.

Instead of studying the problem in \eqref{eq:free-boundary-problem},
we will employ a sequential approximation technique to compute the value function
following \cite{Gugerli} and \cite[Chapter 5]{davis93}.
Similar approach is also taken in \cite{bdk05} and \cite{dps} 
for disorder-detection and hypothesis-testing problems respectively in infinite horizon. Since our problem is in finite-horizon, we work with time-dependent operators, and this requires non-trivial modifications of their arguments. 
The method is described in the sequel, and the proofs are given the Appendix.

\subsection{A sequential approximation}
Let us first define the 
functions
\begin{align}
\label{def:V-m-s}
\begin{aligned}
 V(s, \vp) &\triangleq  \sup_{\tau \le s } \E^{\vp} \left[
 \int_0^{\tau} \! e^{- \rho t }  \, C ( \vP_t  ) dt +
 e^{- \rho \tau   } H \left(\vP_{\tau  } \right) \right], \qquad \text{and}\\
V_m(s, \vp) &\triangleq  \sup_{\tau \le s } \E^{\vp} \left[
 \int_0^{\tau\wedge \sigma_m} \! e^{- \rho t }  \, C ( \vP_t  ) dt +
 e^{- \rho \tau \wedge \sigma_m  } H \left(\vP_{\tau \wedge \sigma_m } \right) \right] ,
\quad \text{for $m \in \N$, on $[0,T] \times D$,}
\end{aligned}
\end{align}
where the first argument `$s$' 
should be considered as the remaining time to maturity. 

Proposition~\ref{prop:uniform-convergence} below shows that $V_m$'s converge to $V$ uniformly; see also the proof of \cite[Theorem (53.40)]{davis93} and \cite[Proposition 3.1]{dps} for related results. Proposition~\ref{prop:uniform-convergence} is a generalization of these results in the finite horizon case.
\begin{prop}
\label{prop:uniform-convergence}
The sequence $\{ V_m \}_{m \ge 1}$ converges to $V$ uniformly on $[0,T] \times D$. More precisely, we have
\begin{align}
\label{eq:uniform-bound}
V_m(s, \vp) \le V (s, \vp) \le V_m(s, \vp)
+ \bigl( T\| C\| + 2 \| H \| \bigr) \left(   \frac{  \bar{\lambda} \,  T}{  m-1  }
\right)^{1/2} \! \cdot \left( \frac{ \bar{\lambda}  }{ 2 \rho + \bar{\lambda} } \right)^{m/2},
\end{align}
for all $(s, \vp) \in [0,T] \times D$ and $m \in \N$, where $\| C \| \triangleq \max_{\vp \in D} |C(\vp)|$, $\| H \| \triangleq \max_{\vp \in D} |H(\vp)|$ and $\bar{\lambda} \triangleq  \max_{i \in E} \lambda_i$.
\end{prop}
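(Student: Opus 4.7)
\emph{Proof plan.} The lower bound $V_m \le V$ will follow immediately: for any $\F$-stopping time $\tau \le s$, the truncation $\tau \wedge \sigma_m$ is itself admissible in the larger problem $V(s,\vp)$, so every payoff appearing in the supremum defining $V_m$ is already dominated by $V$.

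For the upper bound I will fix $\eps > 0$, choose an $\eps$-optimal stopping time $\tau_\eps \le s$ for $V(s,\vp)$, and compare the reward it produces against the reward produced by its truncation $\tau_\eps \wedge \sigma_m$, which is admissible in $V_m(s,\vp)$. The corresponding pathwise difference vanishes on $\{\tau_\eps \le \sigma_m\}$; on the complementary event $\{\sigma_m < \tau_\eps\} \subseteq \{\sigma_m < T\}$, invoking $\tau_\eps \le T$ together with $|C| \le \|C\|$ and $|H| \le \|H\|$, I can bound it pointwise by $e^{-\rho \sigma_m}(T\|C\| + 2\|H\|)$. Consequently
\[
V(s,\vp) - V_m(s,\vp) \;\le\; \eps + (T\|C\| + 2\|H\|)\, \E^{\vp}\!\left[e^{-\rho \sigma_m} 1_{\{\sigma_m < T\}}\right].
\]

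To extract the claimed decay rate from this last expectation, I will apply Cauchy--Schwarz, splitting it as $(\E^{\vp}[e^{-2\rho \sigma_m}])^{1/2}\,(\P^{\vp}[\sigma_m < T])^{1/2}$, and then dominate both factors by comparison with a homogeneous Poisson process of rate $\bar\lambda$. Since the $\F$-compensator of the counting process of $X$ equals $\int_0^t \lambda_{M_s}\id s \le \bar\lambda t$, a standard dominating construction (e.g.\ thinning a Poisson$(\bar\lambda)$ process according to the state of $M$) shows that $\sigma_m$ stochastically dominates the $m$-th arrival time $\wtilde{\sigma}_m$ of a Poisson$(\bar\lambda)$ process. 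The Gamma$(m,\bar\lambda)$ moment generating function then yields $\E^{\vp}[e^{-2\rho \sigma_m}] \le (\bar\lambda/(2\rho + \bar\lambda))^m$, while Markov's inequality applied to the Poisson$(\bar\lambda T)$ count associated with $\wtilde{\sigma}_{m-1}$ delivers $\P^{\vp}[\sigma_m < T] \le \P[\wtilde{\sigma}_{m-1} < T] \le \bar\lambda T/(m-1)$. Letting $\eps \downarrow 0$ and assembling these estimates will produce the stated bound \eqref{eq:uniform-bound}.

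The only step requiring genuine care is the Poissonian comparison $\sigma_m \ge_{\mathrm{st}} \wtilde{\sigma}_m$; this is where the main obstacle lies. It can be handled either by citing a standard result on stochastic orderings of point processes with dominated compensators, or by an explicit coupling/thinning construction on an enlarged probability space. All remaining ingredients---selecting an $\eps$-optimal stopping time, applying Cauchy--Schwarz, and evaluating the Gamma moment generating function---are routine bookkeeping with bounded payoffs.
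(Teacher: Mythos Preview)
Your proof is correct and structurally matches the paper's argument: both truncate an (almost) optimal $\tau$ by $\sigma_m$, bound the resulting discrepancy by $(T\|C\|+2\|H\|)\,\E^{\vp}[e^{-\rho\sigma_m}1_{\{\sigma_m<T\}}]$, and split this expectation via Cauchy--Schwarz into a Laplace-transform factor and a tail-probability factor.

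The one genuine difference is how the two factors are estimated. You invoke a thinning coupling with a Poisson$(\bar\lambda)$ process to get $\sigma_m \ge_{\mathrm{st}} \wtilde\sigma_m$, then read off $\E[e^{-2\rho\sigma_m}] \le (\bar\lambda/(2\rho+\bar\lambda))^m$ from the Gamma mgf and $\P[\sigma_m<T] \le \bar\lambda T/(m-1)$ from Markov's inequality on the dominating Poisson count. The paper instead conditions on the entire path of $M$: since $\P^{\vp}[\sigma_1>t\mid M]=e^{-I(t)}\ge e^{-\bar\lambda t}$ and the interarrival times are conditionally independent given $M$, one obtains $\E^{\vp}[e^{-u\sigma_m}] \le (\bar\lambda/(u+\bar\lambda))^m$ directly, and then uses the identity $\E[1/\sigma_m]=\int_0^\infty \E[e^{-u\sigma_m}]\,du \le \bar\lambda/(m-1)$ together with $\P[\sigma_m<T]\le T\,\E[1/\sigma_m]$. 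Your coupling route is perhaps more intuitive and avoids the $1/\sigma_m$ detour, at the price of either citing a stochastic-ordering result or constructing the coupling explicitly; the paper's Laplace-transform route is self-contained and never enlarges the probability space. Both arrive at exactly the same bound.
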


Let us consider the second problem in \eqref{def:V-m-s} for fixed $m \in \N$, and let $\tau \le s $ be a $\F$-stopping time. 
Then, the dynamic programming intuition suggests that $V(\cdot)$ should solve the equation $V_m(s, \vp) = J_0V_{m-1}(s,\vp)$, where the operator $J_0$ is defined as
\begin{align}
\label{eq:DPP}
\begin{aligned}
J_0w(s,\vp) \triangleq  \sup_{\tau \le s } \E^{\vp} \left[
\int_0^{\tau\wedge \sigma_1} e^{- \rho t }  \, C(\vP_t ) dt +
1_{ \{ \tau < \sigma_1  \} }   e^{- \rho \tau } H\left(\vP_{\tau} \right)
+
1_{ \{  \sigma_1  \le \tau \} }    e^{- \rho \sigma_1 } w  \left( s- \sigma_1 , \vP(\sigma_1 ) \right)
\right],
\end{aligned}
\end{align}
for a bounded function $w: [0,T] \times D  \mapsto \R $.

The following characterization of $\F$-stopping times is from \cite[Theorem T33, p.
308]{bremaud} and \cite[Lemma A2.3, p. 261]{davis93}.
\begin{lemma}\label{lem:bremaud}
For every $\F$-stopping time (bounded as $\tau \le s \le T$), and for every $m \in
\mathbb{N}$, there exists a $\Fc^X_{\sigma_m}$-measurable random variable $R_m $ such that
$\tau \wedge \sigma_{m+1}=(\sigma_m+R_m) \wedge
 \sigma_{m+1}$, $\P$-almost surely on $\{\tau \geq \sigma_m\}$.
\end{lemma}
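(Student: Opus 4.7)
The plan is to exploit the structure of filtrations generated by marked point processes: between successive arrival times $\sigma_m$ and $\sigma_{m+1}$ no genuinely new information accrues beyond the fact ``no jump has yet occurred''. Consequently, any $\F$-stopping time $\tau$ must, on the event $\{\sigma_m \le \tau < \sigma_{m+1}\}$, be expressible as $\sigma_m$ plus a $\Fc^X_{\sigma_m}$-measurable ``waiting duration'' $R_m$. Extending $R_m$ to be $+\infty$ on $\{\sigma_{m+1} \le \tau\}$ yields the desired identity $\tau \wedge \sigma_{m+1} = (\sigma_m + R_m) \wedge \sigma_{m+1}$ on $\{\tau \ge \sigma_m\}$.

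The key technical ingredient I would establish first is the following ``freezing between jumps'' property: for every $u \ge 0$ and every $A \in \Fc^X_{(\sigma_m + u) \wedge \sigma_{m+1}}$ there exists $A' \in \Fc^X_{\sigma_m}$ with
\[ A \cap \{\sigma_{m+1} > \sigma_m + u\} = A' \cap \{\sigma_{m+1} > \sigma_m + u\}. \]
This follows because $\F$ is generated by the random sequence $\{(\sigma_k, Y_k)\}_{k \in \N}$: an event in $\Fc^X_{\sigma_m + u}$ is determined by the pairs $(\sigma_k, Y_k)$ with $\sigma_k \le \sigma_m + u$ together with the indicator of arrivals in $(\sigma_m, \sigma_m + u]$; when we intersect with $\{\sigma_{m+1} > \sigma_m + u\}$, that extra indicator is identically zero and drops out, leaving only information available at time $\sigma_m$.

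Apply the freezing property to $\{\tau \le \sigma_m + u\} \cap \{\sigma_{m+1} > \sigma_m + u\}$, obtaining an $\Fc^X_{\sigma_m}$-measurable set $B_u$. Restricting $u$ to the rationals and using the a.s.\ monotonicity of $u \mapsto B_u$, define
\[ R_m \triangleq \inf\{\, u \in \mathbb{Q}_+ : \omega \in B_u\,\}, \]
with $\inf \emptyset = +\infty$. Then $R_m$ is $\Fc^X_{\sigma_m}$-measurable by construction, and since both $u \mapsto \{\tau \le \sigma_m + u\}$ and $u \mapsto B_u$ are right-continuous in $u$ (up to a $\P$-null set), the monotone limit extends the identification to all real $u$. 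Verification of the claim is then direct: on $\{\sigma_m \le \tau < \sigma_{m+1}\}$ we have $R_m = \tau - \sigma_m$, whereas on $\{\tau \ge \sigma_{m+1}\}$ we have $R_m \ge \sigma_{m+1} - \sigma_m$, and in either case $(\sigma_m + R_m) \wedge \sigma_{m+1}$ equals $\tau \wedge \sigma_{m+1}$.

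The main obstacle is the measurable selection of $R_m$ as a single $\Fc^X_{\sigma_m}$-measurable variable from the family $\{B_u\}_{u \ge 0}$: a priori, freezing gives one $B_u$ for each $u$ only up to null sets, and we must choose a jointly measurable version. Restricting to rationals sidesteps the selection issue, while the monotonicity of the family of events permits a right-continuous extension to all $u$. An alternative route would invoke Galmarino's test on the canonical Skorohod-space realization of the marked point process, but the rational-indexed construction above is the most direct and avoids changing the underlying probability space.
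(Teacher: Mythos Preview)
The paper does not give its own proof of this lemma; it simply cites \cite[Theorem~T33, p.~308]{bremaud} and \cite[Lemma~A2.3, p.~261]{davis93} as standard references. Your proposal is therefore not competing with an in-paper argument but rather reconstructing the cited result. The ``freezing between jumps'' property you identify---that on $\{\sigma_{m+1}>\sigma_m+u\}$ any event in $\Fc^X_{\sigma_m+u}$ coincides with an $\Fc^X_{\sigma_m}$-event---is exactly the structural feature of marked-point-process filtrations underlying Br\'emaud's Theorem~T33, and your rational-index construction of $R_m$ is the standard way to extract a measurable selection. So the route is correct and essentially the one in the cited sources.

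Two minor imprecisions are worth tightening. First, you state the freezing property for $A\in\Fc^X_{(\sigma_m+u)\wedge\sigma_{m+1}}$ but then apply it to $\{\tau\le\sigma_m+u\}$, which a priori sits only in $\Fc^X_{\sigma_m+u}$; the cleaner formulation (and the one you actually use) is for $A\in\Fc^X_{\sigma_m+u}$. Second, the sets $B_u$ are determined by the freezing identity only on $\{\sigma_{m+1}>\sigma_m+u\}$, so ``a.s.\ monotonicity of $u\mapsto B_u$'' is not quite right globally; what you need---and have---is that for each fixed $\omega$ the rationals $u$ with $\omega\in B_u$ form an interval when restricted to $u<\sigma_{m+1}(\omega)-\sigma_m(\omega)$, which is enough to conclude $R_m=\tau-\sigma_m$ on $\{\sigma_m\le\tau<\sigma_{m+1}\}$ and $R_m\ge\sigma_{m+1}-\sigma_m$ on $\{\tau\ge\sigma_{m+1}\}$. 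Neither point affects the validity of the argument.
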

Lemma \ref{lem:bremaud} implies that the supremum in \eqref{eq:DPP} can equivalently be taken
over deterministic times, in which case the same problem becomes 
\begin{align}
\label{def:J-0}
V_m(s, \vp) =   J_0 V_{m-1}(s, \vp) \triangleq  \sup_{ t  \in [0,s] } J V_{m-1}(t, s, \vp),
\end{align}
where the operator $J$ has the form
\begin{align}
\label{def:J}
J w (t, s, \vp) \triangleq
\E^{\vp} \left[ \int_0^{ t \wedge \sigma_1} e^{- \rho t }  \, C(\vP_t ) dt +
1_{ \{ t < \sigma_1  \} }   e^{- \rho t }    H\left(\vP_{t} \right)
+
1_{ \{  \sigma_1  \le t \} }   e^{- \rho \sigma_1 } w \left( s- \sigma_1 , \vP(\sigma_1 ) \right)
\right].
\end{align}
Note that, with the notation in \eqref{def:m},  we have
\begin{align*}
\P^{\vp} \left[ \sigma_1 > u \right] = \E^{\vp} \left[ e^{- I(u)} \right]
\quad \text{and} \quad
\P^{\vp} \left[ \sigma_1 \in du , M_u =i  \right] =  \E^{\vp}  \left[  \lambda_i 1_{ \{ M_u =i
\} } e^{- I(u)} \right] du =\lambda_i \, m_i(u,\vp) \, du,
\end{align*}
and using
the characterization of the paths in \eqref{eq:rel-pi-x} and \eqref{eq:vx-dyn}
the operator $J$ in \eqref{def:J} can be rewritten as 
\begin{multline}
\label{eq:J-expectations}
J w (t, s, \vp) =  \E^{\vp} \left[ e^{- I(t)} \right]
 \cdot e^{- \rho t }  \cdot  H\left(\vx( t, \vp ) \right)   \\
+  \int_{0}^{t} e^{- \rho u}     \sum_{i \in E}  
m_i(u,\vp) \cdot \Bigl(  C(\vx(u, \vp)) + \lambda_i \cdot S_i w(s-u, \vx(u, \vp)) \Bigr) du,
\end{multline}
in terms of the operators
\begin{align}
\label{def:S} S_i w(t, \vp) \triangleq \int_{\R^d} w \left(t, \, \frac{  \lambda_1 f_1(y) \pi_1
}{ \sum_{j \in E} \lambda_j f_j(y) \pi_j }, \ldots, \frac{  \lambda_n f_n(y) \pi_n }{ \sum_{j
\in E} \lambda_j f_j(y) \pi_j }\right) f_i(y) \nu(dy), \quad \text{for $i \in E$.}
\end{align}
\noindent The following lemmas provide basic properties of the operator $J_0$.
\begin{lemma}
\label{lem:prop-of-J} If $ w (\cdot,  \cdot) $ is bounded, then so is $  J_0w (\cdot,  \cdot) $
on $ [0,T] \times D $. If $w_1( \cdot, \cdot  ) \le w_2( \cdot, \cdot  ) $, then $J_0w_1(\cdot,
\cdot ) \le J_0w_2(\cdot, \cdot  ) $. Moreover, if the mapping $\vp \mapsto w(s, \vp )$ is
convex for each $s \in [0,T]$, so is $\vp \mapsto J_0w(s, \vp )$ for each $s \in [0,T]$.
\end{lemma}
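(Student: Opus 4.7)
The plan is to verify the three claims separately using the explicit formula \eqref{eq:J-expectations} for $Jw$ together with the representation $J_0 w(s,\vp) = \sup_{t\in[0,s]} Jw(t,s,\vp)$ from \eqref{def:J-0}. Boundedness and monotonicity will be essentially bookkeeping on that formula; the convexity preservation is the real content.

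For boundedness, I would use that $\vec{m}(u,\vp) = \vp\cdot e^{u(Q-\Lambda)}$ has non-negative entries with $\sum_i m_i(u,\vp) = \E^{\vp}[e^{-I(u)}] \le 1$, that $S_i w$ is bounded by $\|w\|$ because $f_i(y)\,\nu(dy)$ is a probability measure, and that $|H|$ and $|C|$ are bounded on the compact simplex $D$. Each term in \eqref{eq:J-expectations} is then dominated by an explicit constant depending only on $T$, $\|w\|$, $\|H\|$, $\|C\|$ and $\bar\lambda$, uniformly in $t\in[0,s]$, so $J_0 w$ is bounded. For monotonicity, $S_i$ is a positive integral operator, hence $w_1\le w_2$ gives $S_i w_1 \le S_i w_2$; since $\lambda_i,\,m_i(u,\vp)\ge 0$ and the first summand of \eqref{eq:J-expectations} is $w$-free, we get $Jw_1(t,s,\vp)\le Jw_2(t,s,\vp)$ pointwise, and taking $\sup_{t\in[0,s]}$ preserves the inequality.

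The main step is convexity. I would re-express \eqref{eq:J-expectations} in terms of the \emph{unnormalized} filter $\vec{m}(u,\vp) = \vp\cdot e^{u(Q-\Lambda)}$, which is \emph{linear} in $\vp$, using $\vx(u,\vp) = \vec{m}(u,\vp)/\sum_j m_j(u,\vp)$. The terminal piece collapses to $\E^{\vp}[e^{-I(t)}]\,H(\vx(t,\vp)) = \max_k \sum_j \mu_{k,j}\, m_j(t,\vp)$, a pointwise maximum of linear functions of $\vp$ and hence convex. The running-cost piece becomes $\sum_i m_i(u,\vp)\, C(\vx(u,\vp)) = \sum_j c_j m_j(u,\vp)$, which is linear in $\vp$. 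For the $w$-term, setting $r_j(y,\vp) := \lambda_j f_j(y) m_j(u,\vp)$, $\vec{r}(y,\vp) := (r_j(y,\vp))_{j\in E}$, and $R(y,\vp) := \sum_j r_j(y,\vp)$ (all linear in $\vp$ for fixed $y,u$), a short computation gives
\[
\sum_{i\in E} m_i(u,\vp)\,\lambda_i\, S_i w(s-u,\vx(u,\vp)) = \int_{\R^d} R(y,\vp)\, w\!\left(s-u,\frac{\vec{r}(y,\vp)}{R(y,\vp)}\right)\nu(dy),
\]
and the integrand is the perspective transform of the convex map $\vp'\mapsto w(s-u,\vp')$ precomposed with the linear map $\vp\mapsto (\vec{r}(y,\vp),R(y,\vp))$, hence convex in $\vp$ for each fixed $y$ and $u$. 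Integration in $y$, then in $u$, summation of the convex pieces, and finally taking $\sup_{t\in[0,s]}$ all preserve convexity, yielding convexity of $\vp\mapsto J_0 w(s,\vp)$.

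The only non-routine step is this convexity argument; the key is recognising the perspective-function structure hidden in the pairing of the jump integrand $S_i w$ with the weight $m_i(u,\vp)\lambda_i$. Once the unnormalized filter is introduced, linearity in $\vp$ becomes manifest and the standard fact that the perspective of a convex function is convex closes the argument.
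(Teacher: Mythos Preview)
Your proof is correct and follows essentially the same route as the paper: both reduce the convexity claim to the linearity of $\vec m(u,\vp)=\vp\,e^{u(Q-\Lambda)}$ in $\vp$, collapse the terminal term to $\max_k\sum_j\mu_{k,j}m_j(t,\vp)$ and the running-cost term to $\sum_j c_j m_j(u,\vp)$, and then argue the $w$-term is convex. The only stylistic difference is in that last step: the paper writes $w(s,\cdot)$ as a supremum of affine functions and pushes the sup through, whereas you invoke the perspective transform $(\vec r,R)\mapsto R\,w(s-u,\vec r/R)$ precomposed with a linear map --- these are equivalent devices, and your formulation is arguably cleaner.
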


\begin{rem}
\label{rem:inf-J-is-attained}
For a bounded continuous function $w(\cdot, \cdot)$ on $[0,T]\times D$,
the mapping $t \to Jw(t,s,\vp)$ is continuous on $[0,s]$ and 
$\sup_{t \in [u,s]}Jw(t,s,\vp)$
is attained for all $u \in [0,s]$.
\end{rem}

\begin{lemma}
\label{rem:continuity-of-J}
The operator $J_0$ preserves the continuity. That is, if $w(\cdot, \cdot)$ is a continuous function defined on $[0,T] \times D $, then $J_0 w(\cdot, \cdot)$ is also continuous.
\end{lemma}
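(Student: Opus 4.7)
The plan is to pass from continuity of $J$ in its three arguments to continuity of $J_0 w(s,\vp) = \sup_{t\in[0,s]} Jw(t,s,\vp)$ via a compactness argument on the optimization domain. First I would establish that $(t,s,\vp) \mapsto Jw(t,s,\vp)$ is jointly continuous on the compact set $K \triangleq \{(t,s,\vp): 0 \le t \le s \le T,\ \vp \in D\}$; second I would use the sup-over-compact-set technique together with Remark~\ref{rem:inf-J-is-attained} to transfer this continuity to $J_0 w$.

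For the first step, I would work from the explicit expression for $Jw$ in \eqref{eq:J-expectations}. By Corollary~\ref{cor:vP-dyn} the map $(t,\vp) \mapsto \vec{m}(t,\vp) = \vp \cdot e^{t(Q-\Lambda)}$ is smooth, and hence so is $(t,\vp) \mapsto \vx(t,\vp)$. The pieces $H$ and $C$ are affine and therefore continuous on $D$, and $e^{-\rho t}$ is obviously continuous. The non-trivial piece is the operator $S_i w$ in \eqref{def:S}: for a converging sequence $(t_n,\vp_n) \to (t^*,\vp^*)$ in $[0,T]\times D$, the integrand
\[
w\Bigl(t_n, \frac{\lambda_1 \pi_{n,1} f_1(y)}{\sum_j \lambda_j \pi_{n,j} f_j(y)}, \ldots, \frac{\lambda_n \pi_{n,n} f_n(y)}{\sum_j \lambda_j \pi_{n,j} f_j(y)}\Bigr) f_i(y)
\]
converges pointwise in $y$ (wherever the denominator is strictly positive) to the natural limit by continuity of $w$, is dominated by $\|w\|\, f_i(y)$, and $\int_{\R^d} f_i\, d\nu = \nu_i(\R^d)<\infty$, so dominated convergence yields continuity of $(t,\vp) \mapsto S_i w(t,\vp)$. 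The outer $du$-integral in \eqref{eq:J-expectations} over the bounded window $[0,t]$ then inherits joint continuity in $(t,s,\vp)$ since its integrand is jointly continuous in $(u,s,\vp)$ and uniformly bounded on $K$.

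For the second step, being continuous on the compact $K$, the function $Jw$ is uniformly continuous there. Fix $(s^*,\vp^*)$ and let $(s_n,\vp_n) \to (s^*,\vp^*)$. By Remark~\ref{rem:inf-J-is-attained} we can pick $t_n \in [0,s_n]$ with $J_0 w(s_n,\vp_n) = Jw(t_n,s_n,\vp_n)$; passing to a convergent subsequence $t_{n_k} \to t^{\dagger} \in [0,s^*]$, joint continuity gives $\limsup_n J_0 w(s_n,\vp_n) = Jw(t^{\dagger},s^*,\vp^*) \le J_0 w(s^*,\vp^*)$. Conversely, if $t^* \in [0,s^*]$ attains $J_0 w(s^*,\vp^*)$, then $t_n \triangleq t^* \wedge s_n \in [0,s_n]$ satisfies $t_n \to t^*$, and so $J_0 w(s_n,\vp_n) \ge Jw(t_n,s_n,\vp_n) \to Jw(t^*,s^*,\vp^*) = J_0 w(s^*,\vp^*)$. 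Combining the two inequalities yields $J_0 w(s_n,\vp_n) \to J_0 w(s^*,\vp^*)$, establishing the claim.

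The main obstacle I anticipate is the careful treatment of $S_i w$ near boundary points of $D$: at $\vp$ with some $\pi_i = 0$, the rational argument of $w$ becomes indeterminate at those $y$ where $\sum_j \lambda_j \pi_j f_j(y) = 0$, so one must verify either that this exceptional set is $\nu$-null or else fix a measurable convention for the integrand (for instance extend $w$ continuously from the interior or exploit that the factor $\sum_i \pi_i \lambda_i \nu_i$ appearing implicitly in the generator already vanishes on this set). Once this measure-theoretic caveat is handled, the remaining dominated-convergence and compactness steps are routine.
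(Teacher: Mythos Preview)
Your proposal is correct and follows essentially the same strategy as the paper: first establish joint continuity of $(t,s,\vp)\mapsto Jw(t,s,\vp)$ on the compact set $\{0\le t\le s\le T\}\times D$ (via dominated convergence for $S_i w$ and continuity of the remaining pieces of \eqref{eq:J-expectations}), then transfer this to $J_0 w$ using compactness. The paper carries out the integral step with an explicit triangle-inequality splitting and then simply invokes uniform continuity of $Jw$ on the compact set to conclude continuity of the supremum, whereas you spell out the $\limsup/\liminf$ sandwich using Remark~\ref{rem:inf-J-is-attained}; these are logically equivalent, and your boundary caveat for $S_i w$ is a technical point the paper leaves implicit.
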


Let us now define the sequence
\begin{align}
\label{def:v-n}
v_0 (s, \vp) \triangleq  H(\vp), \quad \text{and} \quad v_{m +1}(s, \vp) \triangleq J_0 v_m(s, \vp), \quad \text{for } m \ge 0 ,
\quad \text{on $[0, T] \times D$.}
\end{align}
\begin{lemma}
\label{lem:properties-of-v-m-s}
The sequence $ \{ v_m( \cdot, \cdot ) \}_{ m \in \N }$ is non-decreasing, hence the pointwise limit $v(\cdot, \cdot) \triangleq \sup_{m \in \N} v_m(\cdot, \cdot) $ is well defined on $[0, T] \times D$. Each $ v_m(\cdot, \cdot)$ is bounded and continuous on $[0, T] \times D$, and the mapping $\vp \mapsto  v_m(s, \vp)$ is convex for each $s \in [0,T]$.
\end{lemma}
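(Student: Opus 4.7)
The plan is to prove all the listed properties of $\{v_m\}$ by a single induction on $m$, leaning on the results already established for the operator $J_0$.

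For the \emph{base case}, observe that $v_0(s,\vp)=H(\vp)=\max_{k\in\mathcal{A}}\sum_{i\in E}\mu_{k,i}\pi_i$ is a maximum of finitely many linear functions of $\vp$. Hence on the compact simplex $D$ it is bounded, continuous, and convex in $\vp$ (and trivially jointly continuous on $[0,T]\times D$ since it does not depend on $s$). For the \emph{inductive step}, assuming $v_m$ is bounded, jointly continuous, and convex in $\vp$ for each $s$, Lemma~\ref{lem:prop-of-J} gives that $v_{m+1}=J_0v_m$ is bounded and convex in $\vp$, and Lemma~\ref{rem:continuity-of-J} gives joint continuity. This establishes the three structural properties for all $m$.

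To prove monotonicity I would first show $v_1\ge v_0$ by a direct evaluation at $t=0$ in the representation \eqref{eq:J-expectations}. Setting $t=0$ in $JH(t,s,\vp)$ makes the first term equal to $1\cdot 1\cdot H(\vx(0,\vp))=H(\vp)$ and the integral vanish, so $JH(0,s,\vp)=H(\vp)=v_0(s,\vp)$; taking the supremum over $t\in[0,s]$ gives $v_1(s,\vp)=J_0v_0(s,\vp)\ge v_0(s,\vp)$. Iterating and invoking the monotonicity of $J_0$ from Lemma~\ref{lem:prop-of-J} yields $v_{m+1}=J_0v_m\ge J_0v_{m-1}=v_m$ for every $m\ge 1$, so $\{v_m\}$ is non-decreasing.

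Finally, the pointwise limit $v(s,\vp)=\sup_m v_m(s,\vp)$ is well-defined because the sequence is bounded above uniformly in $m$: one checks by an easy induction that $\|v_m\|\le T\|C\|+\|H\|$, since in $J_0v_m(s,\vp)$ the running cost integral contributes at most $s\|C\|\le T\|C\|$ and the two terminal terms are bounded by $\max(\|H\|,\|v_m\|)$, which by the inductive hypothesis does not exceed $T\|C\|+\|H\|$. Since there is no real obstacle beyond organizing the induction, the only step requiring care is the identification $JH(0,s,\vp)=H(\vp)$ that bootstraps the monotonicity; everything else is a direct appeal to Lemmas~\ref{lem:prop-of-J} and~\ref{rem:continuity-of-J}.
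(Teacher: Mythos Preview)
Your proof is correct and follows essentially the same route as the paper's: bootstrap monotonicity from $JH(0,s,\vp)=H(\vp)$ and propagate via the order-preservation of $J_0$, then push boundedness, continuity, and convexity through the induction using Lemmas~\ref{lem:prop-of-J} and~\ref{rem:continuity-of-J}.

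One minor slip: your closing uniform-bound induction does not close as written, since it only yields $\|v_{m+1}\|\le T\|C\|+\max(\|H\|,\|v_m\|)\le 2T\|C\|+\|H\|$, not $T\|C\|+\|H\|$. The correct inductive invariant is $|v_m(s,\vp)|\le s\|C\|+\|H\|$, tracking the remaining horizon; on $\{\sigma_1\le t\}$ the running cost contributes $\sigma_1\|C\|$ and the terminal term at most $(s-\sigma_1)\|C\|+\|H\|$, summing to $s\|C\|+\|H\|$. That said, this step is not needed for the lemma as stated---the pointwise supremum of a non-decreasing sequence is always well-defined---and the paper does not include it.
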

\begin{proof}
Note that $v_1(s,\vp) = J_0 v_0(s, \vp) = J_0 H(s, \vp) = \sup_{t \in [0,s]} J_0 H(t, s, \vp) \ge J_0 H(0, s, \vp) = H(\vp)$. Let us assume that $v_{m} \ge v_{m-1}$ for some $m \in \N$. Then we get $v_{m+1}(s,\vp) = J_0 v_m(s, \vp) \ge J_0 v_{m-1}(s, \vp) = v_m(s, \vp)$ where the inequality follows due to Lemma~\ref{lem:prop-of-J}. Hence, the sequence is non-decreasing by induction.

The claim on continuity, boundedness and convexity clearly hold for $v_0(\cdot, \cdot) = H(\cdot)$. Then using Lemmas~\ref{lem:prop-of-J} and \ref{rem:continuity-of-J} it can be verified
inductively that these properties also hold for each $v_m $.
\end{proof}

\begin{prop}\label{prop:V-n-epsilon}
The sequences defined in \eqref{def:V-m-s} and \eqref{def:v-n} coincide. That is, we have $v_m (\cdot, \cdot) = V_m (\cdot, \cdot)$ for every $m \in \N$. 
\end{prop}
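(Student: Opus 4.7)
The plan is to prove $v_m = V_m$ by induction on $m$ via the intermediate identity $V_m = J_0 V_{m-1}$. The base case is immediate: since $\sigma_0 \equiv 0$, direct inspection of \eqref{def:V-m-s} gives $V_0(s, \vp) = H(\vp) = v_0(s, \vp)$. For the inductive step, once $V_m = J_0 V_{m-1}$ is in hand, the induction hypothesis and \eqref{def:v-n} yield $V_m = J_0 V_{m-1} = J_0 v_{m-1} = v_m$.

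To establish the upper bound $V_m \le J_0 V_{m-1}$, I would take an arbitrary $\F$-stopping time $\tau \le s$ and, replacing $\tau$ by $\tau \wedge \sigma_m$, assume $\tau \le s \wedge \sigma_m$. Applying Lemma \ref{lem:bremaud} with index $0$ produces an $\Fc^X_{\sigma_0} = \Fc^X_0$-measurable, hence $\P$-a.s.\ deterministic, $R_0 = t_0 \in [0, s]$ such that $\tau \wedge \sigma_1 = t_0 \wedge \sigma_1$. I would split the reward on $\{t_0 < \sigma_1\}$—where $\tau = t_0$ and $\vP_{t_0} = \vx(t_0, \vp)$, so the reward is deterministic—and on $\{\sigma_1 \le t_0\}$—where $\tau - \sigma_1$ is a stopping time of the shifted filtration bounded by $(s - \sigma_1) \wedge \sigma_{m-1}'$, with $\sigma_j'$ denoting the $j$-th arrival after $\sigma_1$. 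The strong Markov property of $\vP$ at $\sigma_1$, which follows from \eqref{eq:Markov-justification} together with the characterization \eqref{eq:rel-pi-x}, bounds the conditional expectation of the residual reward given $\Fc^X_{\sigma_1}$ by $V_{m-1}(s - \sigma_1, \vP(\sigma_1))$. Summing the two pieces yields $V_m(s, \vp) \le J V_{m-1}(t_0, s, \vp)$ for the $t_0$ so produced; supremizing over admissible $\tau$ gives $V_m \le \sup_{t_0 \in [0, s]} J V_{m-1}(t_0, s, \vp) = J_0 V_{m-1}(s, \vp)$.

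For the reverse inequality $V_m \ge J_0 V_{m-1}$, I would fix $\eps > 0$ and $t_0 \in [0, s]$ and paste together: stop at $t_0$ on $\{t_0 < \sigma_1\}$, and at $\sigma_1 + \tilde\tau_\eps(\vP(\sigma_1))$ on $\{\sigma_1 \le t_0\}$, where $\vp' \mapsto \tilde\tau_\eps(\vp')$ is a measurable selection of an $\eps$-optimal stopping time for $V_{m-1}(s - \sigma_1, \vp')$ (bounded by $\sigma_{m-1}'$). Continuity of $v_{m-1} = V_{m-1}$ from Lemma \ref{lem:properties-of-v-m-s} together with standard measurable-selection arguments ensures such a $\tilde\tau_\eps$ exists. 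The resulting pasted $\tau$ is an $\F$-stopping time bounded by $s \wedge \sigma_m$, and its expected reward is at least $J V_{m-1}(t_0, s, \vp) - \eps$. Supremizing over $t_0 \in [0, s]$ and letting $\eps \to 0$ gives $V_m(s, \vp) \ge J_0 V_{m-1}(s, \vp)$.

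The hardest step will be the rigorous use of the strong Markov property of $\vP$ at $\sigma_1$: one needs to identify the conditional law of $\{\vP_{\sigma_1 + u}\}_{u \ge 0}$ given $\Fc^X_{\sigma_1}$ with that of $\vP$ under $\P^{\vP(\sigma_1)}$, so that residual rewards can be replaced by values of $V_{m-1}$ evaluated at the random shifted state. A secondary technical point is the measurable construction of $\eps$-optimal rules for the reverse inequality, which is standard given the continuity of $V_{m-1}$ already established.
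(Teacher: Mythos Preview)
Your proof is correct and follows a somewhat different organization from the paper's. You run an outer induction on $m$, establishing the one-step dynamic programming identity $V_m = J_0 V_{m-1}$ by splitting at $\sigma_1$ and invoking the strong Markov property of $\vP$. The paper instead treats the two inequalities separately and without this outer induction: for $V_m \ge v_m$ it builds explicit $\eps$-optimal stopping times $S_m^\eps(s,\vp)$ recursively (an auxiliary Proposition in the appendix), and for $V_m \le v_m$ it runs, for each fixed $m$, a \emph{backward} chain of inequalities indexed by $k=1,\dots,m+1$, peeling one arrival at a time from $\sigma_m$ down to $\sigma_0$, using Lemma~\ref{lem:bremaud} and the strong Markov property at each $\sigma_{m-k}$ to replace the post-$\sigma_{m-k}$ reward by $v_k$. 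Your forward split at $\sigma_1$ is more in line with the standard DPP argument and arguably cleaner; the paper's explicit recursive stopping times have the advantage of making measurability automatic, since the deterministic thresholds $r_m^\eps(s,\vp)$ are defined directly from the continuous functions $v_m$. Two minor points on your side: your $\tilde\tau_\eps$ must be measurable in both coordinates $(s-\sigma_1,\vP(\sigma_1))$, not just in $\vP(\sigma_1)$; and since you are already inside an induction with $V_{m-1}=v_{m-1}$ continuous and the base case trivial, the $\eps$-optimal family can itself be built recursively, so no abstract measurable-selection theorem is actually needed.
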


\begin{cor}
\label{cor:V-equal-v}
Propositions~\ref{prop:uniform-convergence} and \ref{prop:V-n-epsilon} imply $v(\cdot, \cdot) \triangleq \lim_{m \in \N}v_m(\cdot, \cdot) = \lim_{m \in \N} V_m(\cdot, \cdot) = V(\cdot, \cdot)$. 
By Lemma~\ref{lem:properties-of-v-m-s}, each $V_m(\cdot, \cdot) $ is continuous on $[0, T] \times D$. 
Then, the uniform convergence in Proposition~\ref{prop:uniform-convergence} implies that $V(\cdot, \cdot)$ is also continuous.
Finally, as the upper envelope of convex mappings $\vp \mapsto v_m(s,\vp)= V_m(s,\vp)$, the
mapping $\vp \mapsto  V(s, \vp)$ is again convex for each $s \in [0,T]$.
\end{cor}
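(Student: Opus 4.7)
The corollary is a packaging of three short consequences of Propositions~\ref{prop:uniform-convergence} and \ref{prop:V-n-epsilon} together with Lemma~\ref{lem:properties-of-v-m-s}, so the plan is simply to chain these results in the right order.

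First, for the equality $v=V$: Lemma~\ref{lem:properties-of-v-m-s} shows $\{v_m\}$ is non-decreasing, so the pointwise limit $v(s,\vp) = \sup_m v_m(s,\vp) = \lim_m v_m(s,\vp)$ is well-defined on $[0,T]\times D$. Proposition~\ref{prop:V-n-epsilon} gives the per-$m$ identity $v_m = V_m$, and Proposition~\ref{prop:uniform-convergence} gives $V_m \to V$ uniformly (hence pointwise). Substituting yields
\[
v(s,\vp) \;=\; \lim_m v_m(s,\vp) \;=\; \lim_m V_m(s,\vp) \;=\; V(s,\vp), \qquad (s,\vp) \in [0,T]\times D.
\]

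Second, for continuity: Lemma~\ref{lem:properties-of-v-m-s} asserts that each $v_m$ is continuous on $[0,T]\times D$, and by the previous step $V_m = v_m$ is continuous as well. Continuity then passes to the uniform limit $V = \lim_m V_m$ supplied by Proposition~\ref{prop:uniform-convergence}, a standard fact about uniform limits of continuous functions on a metric space.

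Third, for convexity of $\vp \mapsto V(s,\vp)$: fix $s\in[0,T]$. By Lemma~\ref{lem:properties-of-v-m-s} each $\vp \mapsto V_m(s,\vp) = v_m(s,\vp)$ is convex, so for any $\vp_1,\vp_2\in D$ and $\alpha\in[0,1]$ the inequality
\[
V_m\bigl(s,\alpha\vp_1 + (1-\alpha)\vp_2\bigr) \;\le\; \alpha V_m(s,\vp_1) + (1-\alpha)V_m(s,\vp_2)
\]
holds at every level $m$; letting $m\to\infty$ and using pointwise convergence $V_m\to V$ preserves the inequality. Equivalently, $V(s,\cdot) = \sup_m V_m(s,\cdot)$ is the upper envelope of a family of convex functions and is therefore convex. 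There is no real obstacle here, since every input has already been established; the only minor care is to note that monotonicity of $\{v_m\}$ is what identifies $\lim_m v_m$ with $\sup_m v_m$, and that continuity of $V$ genuinely needs the uniform (not merely pointwise) convergence in Proposition~\ref{prop:uniform-convergence}.
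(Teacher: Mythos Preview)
Your proposal is correct and matches the paper's approach exactly: the corollary in the paper is stated without a separate proof precisely because its content is just the chaining of Proposition~\ref{prop:V-n-epsilon} ($v_m=V_m$), Proposition~\ref{prop:uniform-convergence} (uniform convergence $V_m\to V$), and Lemma~\ref{lem:properties-of-v-m-s} (continuity and convexity of each $v_m$), which you have spelled out in full. Your added remark that monotonicity is what identifies $\lim_m v_m$ with $\sup_m v_m$, and that uniform convergence is genuinely needed for continuity, is a nice clarification but not a departure from the paper.
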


Proposition~\ref{prop:V-n-fixed-point} below characterizes the value function $V(\cdot, \cdot )$ as the fixed point of the operator $J_0$ defined in (\ref{def:J-0}-\ref{eq:J-expectations}), which can also be thought of as the dynamic programming equation for the value function $V(\cdot, \cdot )$.

\begin{prop}\label{prop:V-n-fixed-point}
The value function satisfies $V(s, \vp) = J_0 V(s,\vp)$, and it is the smallest bounded solution of this equation greater than $H(\cdot)$.
\end{prop}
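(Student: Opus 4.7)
The plan is to pass to the limit in the recursion $v_{m+1} = J_0 v_m$ from Proposition~\ref{prop:V-n-epsilon} and exploit monotonicity of $J_0$ (Lemma~\ref{lem:prop-of-J}). Since Corollary~\ref{cor:V-equal-v} provides $v_m \uparrow V$ uniformly on $[0,T] \times D$, once $J_0$ is known to be non-expansive in the supremum norm, sending $m \to \infty$ in $v_{m+1} = J_0 v_m$ immediately yields $V = J_0 V$.

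To establish the non-expansiveness, I would use the representation (\ref{eq:J-expectations}) together with the definition of $S_i$ in (\ref{def:S}). The only dependence of $Jw(t, s, \vp)$ on $w$ is through the terms $\lambda_i S_i w(s-u, \vx(u, \vp))$. Since each $\nu_i$ is a probability measure, $f_i\,\nu$ is itself a probability measure, so $|S_i w_1 - S_i w_2|(t, \vp) \le \|w_1 - w_2\|_\infty$ uniformly in $(t,\vp)$. Consequently,
\[
|Jw_1(t, s, \vp) - Jw_2(t, s, \vp)| \le \|w_1 - w_2\|_\infty \int_0^t e^{-\rho u} \sum_{i \in E} \lambda_i\, m_i(u, \vp)\, du \le \|w_1 - w_2\|_\infty,
\]
because $\sum_{i \in E} \lambda_i m_i(u, \vp)\, du$ is the sub-probability density of $\sigma_1$ under $\P^{\vp}$. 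Taking suprema over $t \in [0,s]$ (using $|\sup f - \sup g| \le \sup|f-g|$) produces $\|J_0 w_1 - J_0 w_2\|_\infty \le \|w_1 - w_2\|_\infty$. Applying this with $w_1 = v_m$, $w_2 = V$ yields $J_0 v_m \to J_0 V$ uniformly, so
\[
V \;=\; \lim_{m \to \infty} v_{m+1} \;=\; \lim_{m \to \infty} J_0 v_m \;=\; J_0 V.
\]

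For minimality, let $w\colon [0,T]\times D \to \R$ be bounded with $w \ge H$ and $w = J_0 w$. Since $v_0 = H \le w$, monotonicity of $J_0$ gives $v_1 = J_0 v_0 \le J_0 w = w$. An induction on $m$ using the same monotonicity then produces $v_m \le w$ for every $m \in \N$, and letting $m \to \infty$ gives $V \le w$. Note that $V$ itself satisfies the hypotheses of the minimality claim: it is bounded (by $T\|C\| + \|H\|$), dominates $H$ (take $\tau \equiv 0$ in (\ref{def:V})), and solves $V = J_0 V$ by the first step.

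The main obstacle is really the non-expansiveness of $J_0$ in the first step; without it, the fixed-point equation cannot be obtained by passing to the limit. Fortunately, the explicit integral representation (\ref{eq:J-expectations}) reduces this to a single one-line estimate via the fact that the underlying kernel integrates to the arrival probability of $\sigma_1$, which is at most one.
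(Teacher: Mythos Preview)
Your proof is correct. Both parts---the fixed-point equation and minimality---go through.

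Your approach to $V = J_0 V$ differs from the paper's in packaging. The paper writes $V = \sup_n v_n = \sup_n \sup_{t\in[0,s]} Jv_{n-1}(t,s,\vp)$, swaps the two suprema, expands $Jv_{n-1}$ via (\ref{eq:J-expectations}), and passes $\sup_n$ inside the $du$-integral by bounded convergence (using $\|v_m\|\le \|H\|+T\|C\|$). You instead prove that $J_0$ is $1$-Lipschitz in the sup norm---your key estimate $\int_0^t e^{-\rho u}\sum_i\lambda_i m_i(u,\vp)\,du = \E^{\vp}[e^{-\rho\sigma_1}1_{\{\sigma_1\le t\}}]\le 1$ is exactly right---and then use the uniform convergence $v_m\to V$ from Corollary~\ref{cor:V-equal-v}. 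Your route is slightly more robust: it does not require the monotonicity of $\{v_m\}$, and the non-expansiveness of $J_0$ is a reusable structural fact. The paper's route is marginally shorter because it avoids isolating the Lipschitz estimate as a separate step. For the minimality claim the two arguments are identical.

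One small citation point: the recursion $v_{m+1}=J_0 v_m$ is the \emph{definition} (\ref{def:v-n}), not a consequence of Proposition~\ref{prop:V-n-epsilon}; the latter identifies $v_m$ with $V_m$.
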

\begin{proof}
Using Lemma~\ref{lem:properties-of-v-m-s} and Corollary~\ref{cor:V-equal-v} we get
$V(s,\vp) =v(s,\vp)  = \sup_{n\ge 1} v_n(s,\vp) =  $
\begin{align*}
& \sup_{n \ge 1} \sup_{t \in [0,s]} Jv_{n-1}(t,s,\vp)   =
 \sup_{t \in [0,s]} \sup_{n \ge 1} Jv_{n-1}(t,s,\vp) = \sup_{t \in [0,s]} \sup_{n \ge 1} \E^{\vp} \left[ e^{- I(t)} \right]
 \cdot e^{- \rho t }  \cdot  H\left(\vx( t, \vp ) \right) \\
&\qquad \qquad \qquad \qquad  +  \int_{0}^{t} e^{- \rho u}     \sum_{i \in E}  m_i(u,\vp)
\cdot \Big(  C(\vx(u, \vp)) + \lambda_i \cdot S_i v_{n-1}(s-u, \vx(u, \vp)) \Big) du \\
 & = \sup_{t \le s} \E^{\vp} \left[ e^{- I(t)} \right]
 \cdot e^{- \rho t }  \cdot  H\left(\vx( t, \vp ) \right) \\
&\qquad \qquad \qquad \qquad +  \int_{0}^{t} e^{- \rho u}     \sum_{i \in E} m_i(u,\vp)
 \cdot \Big(  C(\vx(u, \vp)) + \lambda_i \cdot S_i v(s-u, \vx(u, \vp)) \Big) du \\
& = \sup_{t \in [0,s]} Jv(t,s,\vp) = \sup_{t \in [0,s]} JV(t,s,\vp),
\end{align*}
where the fifth equality is from \eqref{eq:J-expectations} and the sixth equality is by the
bounded convergence theorem since we have $\| v_m(\cdot, \cdot) \| \le \| v(\cdot, \cdot) \| \le \| H(\cdot) \| + T \| C(\cdot) \| $ for all $m \in \N$.

Let $W(\cdot, \cdot)$ be another solution of $W(s,\vp) = J_0 W(s,\vp)$, such that $W(s,\vp)
\ge H(\vp)= v_0(s,\vp)$. Applying Remark \ref{lem:prop-of-J} we obtain
$ W(s,\vp) =  J_0 W(s,\vp)  \ge \sup_{t \in [0,s]} Jv_0(t,s,\vp) = v_1(s,\vp)$. 
By induction, $W(s,\vp) \ge v_n(s,\vp)$ for all $n$ and hence $W(s,\vp) \ge  \lim_{n\to\infty} v_n(s,\vp) = V(s,\vp)$.
\end{proof}
We finally close this section with the following result which will be useful in
Section~\ref{sec:solution} in establishing an optimal stopping time.

\begin{lemma}\label{lemm:J-dyn-prog}
For deterministic times $u \le t \le s$, and for a bounded function $w(\cdot, \cdot)$ we have
\begin{align}
\label{eq:J-dyn-prog}
Jw(t,s,\vp) = Jw(u,s,\vp) + \P^{\vp} \left\{ \sigma_1 > u \right\} \cdot e^{-\rho u}\cdot
\Big( Jw(t-u, s-u,x(u,\vp) ) - H(x(u, \vp)) \Big)
\end{align}
\end{lemma}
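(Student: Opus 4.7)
The plan is to verify the identity by direct computation from the integral representation \eqref{eq:J-expectations}, using the strong Markov structure encoded in the semigroup property of $\vx$ and the matrix-exponential form of $\vec{m}$.

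First, I would expand $Jw(t,s,\vp)$ using \eqref{eq:J-expectations} and split the integration range $[0,t]$ as $[0,u]\cup[u,t]$. The part of $Jw(t,s,\vp)$ living on $[0,u]$ is literally the integral term inside $Jw(u,s,\vp)$. What is extra on the $Jw(u,s,\vp)$ side is the terminal contribution $\E^{\vp}[e^{-I(u)}]\,e^{-\rho u}\,H(\vx(u,\vp)) = \P^{\vp}\{\sigma_1>u\}\,e^{-\rho u}\,H(\vx(u,\vp))$, and this is cancelled precisely by the $-\P^{\vp}\{\sigma_1>u\}\,e^{-\rho u}\,H(\vx(u,\vp))$ appearing on the right-hand side of the lemma. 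The remaining task is then to show that the $[u,t]$-tail of $Jw(t,s,\vp)$ together with its terminal term $\E^{\vp}[e^{-I(t)}]\,e^{-\rho t}\,H(\vx(t,\vp))$ coincides with $\P^{\vp}\{\sigma_1>u\}\,e^{-\rho u}\,Jw(t-u,\,s-u,\,\vx(u,\vp))$.

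Second, I would expand $Jw(t-u,s-u,\vx(u,\vp))$ by applying \eqref{eq:J-expectations} to the shifted problem and then perform the change of variables $r=u+r'$ in its integral. Two identities make everything align: the semigroup property $\vx(r-u,\vx(u,\vp))=\vx(r,\vp)$ from Remark~\ref{rem:sample-paths-of-pi}, and the multiplicative relation
\begin{equation*}
m_i(r,\vp) \;=\; \E^{\vp}[e^{-I(u)}] \cdot m_i(r-u,\vx(u,\vp)), \qquad r\ge u,
\end{equation*}
which follows directly from $\vec{m}(\tau,\vp)=\vp\,e^{\tau(Q-\Lambda)}$ in Corollary~\ref{cor:vP-dyn} combined with the definition $\vx(u,\vp)=\vec{m}(u,\vp)/\E^{\vp}[e^{-I(u)}]$. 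Summing over $i\in E$ gives the parallel identity $\E^{\vp}[e^{-I(t)}]=\E^{\vp}[e^{-I(u)}]\cdot\E^{\vx(u,\vp)}[e^{-I(t-u)}]$, which is exactly what is needed to handle the terminal term.

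After multiplying $Jw(t-u,s-u,\vx(u,\vp))$ by $\P^{\vp}\{\sigma_1>u\}\,e^{-\rho u}=\E^{\vp}[e^{-I(u)}]\,e^{-\rho u}$, the extra factor $\E^{\vp}[e^{-I(u)}]$ converts the integrand on $[0,t-u]$ (in the variable $r'$) into the $[u,t]$-integrand of $Jw(t,s,\vp)$ (in the variable $r$), and converts the terminal term into $e^{-\rho t}\,\E^{\vp}[e^{-I(t)}]\,H(\vx(t,\vp))$. Combining with the $[0,u]$-piece matched earlier recovers $Jw(t,s,\vp)$ exactly. There is no real obstacle beyond careful bookkeeping; the substantive input is the multiplicative semigroup identity for $\vec{m}$, after which the verification reduces to a change-of-variables computation. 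Alternatively, one can read the same identity off from the strong Markov property of $\vP$ applied at the deterministic time $u$, but the direct integral approach is the cleaner route since \eqref{eq:J-expectations} is already available.
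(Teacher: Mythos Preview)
Your proposal is correct and the identity checks out exactly as you describe; the key multiplicative relation $m_i(r,\vp)=\E^{\vp}[e^{-I(u)}]\,m_i(r-u,\vx(u,\vp))$ follows immediately from $\vec{m}(t,\vp)=\vp\,e^{t(Q-\Lambda)}$ and is precisely what makes the change of variables work.

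The paper takes the probabilistic route you mention only as an alternative: it starts from the expectation definition \eqref{def:J} rather than the integral form \eqref{eq:J-expectations}, splits $\int_0^{t\wedge\sigma_1}$ into $\int_0^{u\wedge\sigma_1}+\int_{u\wedge\sigma_1}^{t\wedge\sigma_1}$, adds and subtracts the term $1_{\{u<\sigma_1\}}e^{-\rho u}H(\vP_u)$, and then invokes the Markov property of $\vP$ at the deterministic time $u$ together with $\sigma_1\wedge t = u + (\sigma_1\wedge(t-u))\circ\theta_u$ on $\{\sigma_1>u\}$. Your approach is purely analytic: once \eqref{eq:J-expectations} is in hand, everything reduces to the semigroup identity for $\vec{m}$ and a change of variables, with no further appeal to the Markov property. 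The two arguments are equivalent in content (your multiplicative identity for $\vec{m}$ is exactly the analytic expression of conditioning on $\{\sigma_1>u\}$), but your route is arguably more streamlined here since the integral representation has already absorbed the probabilistic structure, while the paper's route keeps the stopping-time intuition more visible.
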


\begin{cor}
\label{cor:J-dyn-prog-with-J-0}
Let $w$ be a bounded function as in Lemma~\ref{lemm:J-dyn-prog}. Taking the supremum in \eqref{eq:J-dyn-prog} for fixed $u$ and $s$ we obtain
\begin{align*}
\sup_{t \in[u,s] } Jw(t,s,\vp) = Jw(u,s,\vp) + \P^{\vp} \left\{ \sigma_1 > u \right\} \cdot e^{-\rho u}\cdot
\Big( J_0w( s-u,x(u,\vp) ) - H(x(u, \vp)) \Big),
\end{align*}
where $J_0$ is as defined in \eqref{def:J-0}.
\end{cor}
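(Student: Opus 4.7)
The plan is to start from the pointwise identity \eqref{eq:J-dyn-prog} provided by Lemma~\ref{lemm:J-dyn-prog} and simply take $\sup_{t\in[u,s]}$ of both sides, treating the $u$-dependent pieces as constants. Concretely, for fixed $u,s,\vp$ with $u\le s$, rewrite the lemma's conclusion as
\begin{equation*}
Jw(t,s,\vp) \;=\; A(u,s,\vp) \;+\; B(u,\vp)\cdot Jw(t-u,\,s-u,\,x(u,\vp)),
\end{equation*}
where $A(u,s,\vp) \triangleq Jw(u,s,\vp) - \P^{\vp}\{\sigma_1>u\}\cdot e^{-\rho u}\cdot H(x(u,\vp))$ and $B(u,\vp) \triangleq \P^{\vp}\{\sigma_1>u\}\cdot e^{-\rho u}$ are both independent of $t$, and $B(u,\vp)\ge 0$.

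Taking the supremum over $t\in[u,s]$ on both sides, only the last factor depends on $t$, so it distributes to give $\sup_{t\in[u,s]} Jw(t,s,\vp) = A(u,s,\vp) + B(u,\vp)\cdot \sup_{t\in[u,s]} Jw(t-u,s-u,x(u,\vp))$. The change of variable $r=t-u$ identifies $\sup_{t\in[u,s]} Jw(t-u,s-u,x(u,\vp)) = \sup_{r\in[0,s-u]} Jw(r,s-u,x(u,\vp))$, which by the very definition \eqref{def:J-0} is exactly $J_0w(s-u,x(u,\vp))$. Substituting $A$ and $B$ back in yields the displayed formula.

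The only things to check are that the supremum is in fact attained/finite so that the manipulation is rigorous (for general bounded $w$ boundedness of $J_0 w$ suffices, using Lemma~\ref{lem:prop-of-J}; continuity from Remark~\ref{rem:inf-J-is-attained} ensures attainment in the continuous case) and that $B(u,\vp)\ge 0$ so the constant pulls out of the supremum without a sign flip. Neither involves any hidden obstacle, so the corollary is essentially a one-line consequence of Lemma~\ref{lemm:J-dyn-prog} together with the definition of $J_0$; the mild bookkeeping point to flag is simply that the supremum over $t\in[u,s]$ on the left becomes a supremum over $r\in[0,s-u]$ on the right after translating the time variable.
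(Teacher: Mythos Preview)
Your argument is correct and is exactly what the paper intends: the corollary is stated without a separate proof, the phrase ``Taking the supremum in \eqref{eq:J-dyn-prog} for fixed $u$ and $s$'' being the entire justification. Your write-up simply makes explicit the two trivial observations underlying that sentence---that the nonnegative factor $\P^{\vp}\{\sigma_1>u\}\,e^{-\rho u}$ pulls through the supremum, and that the substitution $r=t-u$ turns $\sup_{t\in[u,s]}Jw(t-u,s-u,x(u,\vp))$ into $J_0w(s-u,x(u,\vp))$ by definition.
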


\section{An Optimal Strategy}
\label{sec:solution}

Recall that the process $\vP$ has right-continuous paths (with left limits), and the functions $V(\cdot, \cdot)$ and $H(\cdot)$ are continuous due to Corollary~\ref{cor:V-equal-v}. Hence the paths of the process $V(t, \vP_t) - H(\vP_t)$ are also right-continuous and have left limits. Therefore, for $\varepsilon \ge 0$ the random time
\begin{align}
\label{def:U-eps}
U_{\varepsilon } (s,\vp) \triangleq \inf \left\{ t \in [0,s] \, :\,  V(s-t, \vP_t)  - \varepsilon \le H(\vP_t)  \right\}
\end{align}
is a well-defined $\F$-stopping time. Observe that we have $U_{\varepsilon } (s,\vp) \wedge \sigma_1 = r_{\varepsilon } (s,\vp) \wedge \sigma_1$, where
\begin{align}
\label{def:r-eps}
r_{\varepsilon } (s,\vp) \triangleq \inf \left\{ t \in [0,s] \, :\,  V(s-t, \vx(t, \vp))  - \varepsilon \le H(\vx(t, \vp))  \right\},
\end{align}
which can be considered as the deterministic counterpart of \eqref{def:U-eps}.

\begin{rem}\label{rem:V-t-u-supremum}
For $r_{\varepsilon } (s,\vp)$ defined in \eqref{def:r-eps} we have
\begin{align}\label{eq:V-t-u-supremum}
\sup_{t \in[0,s]} JV(t,s,\vp) = \sup_{ t \in [ r_{\varepsilon } (s,\vp) ,  s] } JV(t,s,\vp).
\end{align}
\end{rem}
\begin{proof}
For $t < r_{\varepsilon } (s,\vp)$, Proposition ~\ref{prop:V-n-fixed-point} and Corollary~\ref{cor:J-dyn-prog-with-J-0} give
\begin{align*}
JV(t,s,\vp) = \sup_{ u \in [t,s]} JV(u,s,\vp) - \P^{\vp} \{  \sigma_1 >t \} e^{-\rho t} \Big( V(s-t,\vx(t,\vp) ) - H(\vx(t,\vp)) \Big).
\end{align*}
Since $t < r_{\varepsilon } (s,\vp)$ we have $V(s-t, \vx(t,\vp)) - H( \vx(t,\vp) ) > \varepsilon$. Hence
\begin{align*}
JV(t,s,\vp) &\le  \sup_{ u \in [t,s]} JV(u,s,\vp) - \varepsilon \P^{\vp} \{  \sigma_1 >t \} e^{-\rho t} \\ &\le
\sup_{ u \in [0,s]} JV(u,s,\vp) - \varepsilon \P^{\vp} \{  \sigma_1 >t \} e^{-\rho t}
< \sup_{ u \in [0,s]} JV(u,s,\vp) .
\end{align*}
Therefore the supremum in
$\sup_{t \in[0,s]} JV(t,s,\vp)$ must be achieved on $[r_{\varepsilon } (s,\vp), s]$ and \eqref{eq:V-t-u-supremum} follows.
\end{proof}

\begin{prop}
\label{prop:U-eps-optimal-rule}
The stopping time $U_{\varepsilon } (s,\vp)$ defined in \eqref{def:U-eps} is an $\varepsilon$-stopping time for the problem in \eqref{def:V}, i.e.,
\begin{align}
 \E^{\vp} \left[
 \int_{0 }^{ U_{\varepsilon}(s,\vp) } \! e^{- \rho t} C(\vP_t ) \, dt +
 e^{-\rho\,  U_{\varepsilon}(s,\vp) }   H\left(\vP( U_{\varepsilon}(s,\vp)  \right) \right] \ge V(s, \vp) - \varepsilon, 
\end{align}
for all $\varepsilon \ge 0$ and $(s,\vp) \in [0,T] \times D$.
\end{prop}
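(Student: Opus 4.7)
The plan is to bound the suboptimality of $U_\varepsilon$ by $\varepsilon$ via a one-step comparison between $V$ and the actual reward of the policy, which is then iterated over the jump epochs $\sigma_1, \sigma_2, \ldots$ using the strong Markov property of the PDP $\vP$. Denote
$R(s,\vp) \triangleq \E^{\vp}\!\left[\int_0^{U_\varepsilon(s,\vp)} e^{-\rho t} C(\vP_t)\, dt + e^{-\rho\, U_\varepsilon(s,\vp)} H(\vP_{U_\varepsilon(s,\vp)})\right]$
and $\Delta(s,\vp) \triangleq V(s,\vp) - R(s,\vp)$; the target inequality is $\Delta \le \varepsilon$ uniformly on $[0,T]\times D$.

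For the one-step bound, I combine the dynamic programming equation $V = J_0 V$ from Proposition~\ref{prop:V-n-fixed-point} with Corollary~\ref{cor:J-dyn-prog-with-J-0} applied at $u = r_\varepsilon(s,\vp)$, using Remark~\ref{rem:V-t-u-supremum} which guarantees that the supremum defining $V$ is attained on $[r_\varepsilon, s]$. This yields
\begin{equation*}
V(s,\vp) = JV(r_\varepsilon, s, \vp) + \P^{\vp}\!\left\{\sigma_1 > r_\varepsilon\right\} e^{-\rho r_\varepsilon}\bigl(V(s-r_\varepsilon, \vx(r_\varepsilon, \vp)) - H(\vx(r_\varepsilon, \vp))\bigr).
\end{equation*}
Continuity of $V$, $H$ and $\vx$ (Corollary~\ref{cor:V-equal-v}) ensures that the infimum in the definition of $r_\varepsilon$ is attained, so the bracketed term is at most $\varepsilon$. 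Since $U_\varepsilon \wedge \sigma_1 = r_\varepsilon \wedge \sigma_1$, the strong Markov property at $\sigma_1$ gives the same representation for $R(s,\vp)$ as for $JV(r_\varepsilon, s, \vp)$, except that $V(s-\sigma_1, \vP_{\sigma_1})$ is replaced by $R(s-\sigma_1, \vP_{\sigma_1})$ on $\{\sigma_1 \le r_\varepsilon\}$. Subtracting produces the one-step recursion
\begin{equation*}
\Delta(s,\vp) \le \varepsilon\, \P^{\vp}\{\sigma_1 > r_\varepsilon\} e^{-\rho r_\varepsilon} + \E^{\vp}\!\left[1_{\{\sigma_1 \le r_\varepsilon\}} e^{-\rho \sigma_1}\, \Delta(s-\sigma_1, \vP_{\sigma_1})\right].
\end{equation*}

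To conclude, I iterate this inequality on $\Delta(s-\sigma_1, \vP_{\sigma_1})$ using the strong Markov property at $\sigma_2, \sigma_3, \ldots$; after $n$ steps,
\begin{equation*}
\Delta(s,\vp) \le \varepsilon \sum_{k=0}^{n-1} q_k(s,\vp) + \|\Delta\|_\infty\, \E^{\vp}\!\left[1_{A_n}\, e^{-\rho \sigma_n}\right],
\end{equation*}
where $A_n$ is the event that in each of the first $n$ cycles the corresponding jump arrived before its deterministic threshold, and $q_k(s,\vp)$ is the discounted probability that the policy stops at the deterministic threshold in the $(k+1)$-th cycle. Boundedness of both $V$ and $R$ by $T\|C\| + \|H\|$ gives $\|\Delta\|_\infty < \infty$, and $\P^{\vp}(A_n) \le \P^{\vp}\{\sigma_n \le T\} \to 0$, so the residual vanishes. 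The events underlying the $q_k$'s are disjoint, and because $V(0,\cdot) = H(\cdot)$ forces $U_\varepsilon(s,\vp) \le s \le T$ almost surely, they partition the sample space; hence $\sum_{k\ge 0} q_k = \E^{\vp}[e^{-\rho U_\varepsilon(s,\vp)}] \le 1$, yielding $\Delta(s,\vp)\le \varepsilon$. The main obstacle is making the iteration rigorous: the PDP is time-homogeneous but $V$ depends on the residual horizon, so each cycle uses a freshly evaluated threshold $r_\varepsilon(s-\sigma_k, \vP_{\sigma_k})$, and one must verify measurability and invoke the strong Markov property cycle-by-cycle while checking that the disjoint partition of $\{U_\varepsilon \le s\}$ indeed accounts for both ``deterministic-threshold'' stops with $r_\varepsilon > 0$ and immediate post-jump stops with $r_\varepsilon = 0$.
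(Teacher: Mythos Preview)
Your proof is correct and uses the same core ingredients as the paper: the fixed-point equation $V=J_0V$, Corollary~\ref{cor:J-dyn-prog-with-J-0} evaluated at $u=r_\varepsilon$, Remark~\ref{rem:V-t-u-supremum}, the strong Markov property at the jump times, and an induction over the epochs $\sigma_m$. The packaging is different, however. The paper introduces the process $Z_t = \int_0^t e^{-\rho u}C(\vP_u)\,du + e^{-\rho t}V(s-t,\vP_t)$ and proves by induction on $m$ the \emph{exact equality} $\E^{\vp}[Z_{U_\varepsilon\wedge\sigma_m}]=Z_0=V(s,\vp)$, then passes to the limit by bounded convergence; the $\varepsilon$-suboptimality bound follows in one line from $V(s-U_\varepsilon,\vP_{U_\varepsilon})\le H(\vP_{U_\varepsilon})+\varepsilon$ at the hitting time. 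Your argument instead tracks the suboptimality $\Delta=V-R$ directly, derives the one-step recursion $\Delta(s,\vp)\le\varepsilon\,\P^{\vp}\{\sigma_1>r_\varepsilon\}e^{-\rho r_\varepsilon}+\E^{\vp}[1_{\{\sigma_1\le r_\varepsilon\}}e^{-\rho\sigma_1}\Delta(s-\sigma_1,\vP_{\sigma_1})]$, and iterates it. The paper's route yields a slightly stronger intermediate result (the martingale identity \eqref{eq:M-martingale}), which is reused later in Section~\ref{subsec:nearly-optimal-rules} for the nearly-optimal rules $U^{(m)}_{\varepsilon/2}$; your route is perhaps more transparent about where the $\varepsilon$ enters but does not produce that identity. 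Both arguments handle the residual the same way via $\P^{\vp}\{\sigma_n\le T\}\to 0$, and your observation that $\sum_k q_k=\E^{\vp}[e^{-\rho U_\varepsilon}]\le 1$ because the stopping events partition $\Omega$ is correct and closes the bound.
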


Before proceeding with the proof of Proposition~\ref{prop:U-eps-optimal-rule}, we first state an immediate consequence of this result.

\begin{cor}
\label{cor:U-0}
The stopping time $U_0 (T,\vp)$ is an optimal rule for the stopping problem of \eqref{def:V}, and
the pair $( U_0 (T,\vp) , d(U_0 (T,\vp)) ) $ is an optimal admissible strategy for the problem in \eqref{def:U}.
\end{cor}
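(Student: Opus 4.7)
The plan is to derive the corollary directly from Proposition~\ref{prop:U-eps-optimal-rule} by specializing to $\varepsilon=0$, together with the equivalence between the original decision problem \eqref{def:U} and the pure stopping problem \eqref{def:V} established in Section~\ref{sec:probStat}.

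First, I would observe that $U_0(T,\vp)$ is a bona fide $\F$-stopping time bounded by $T$. Indeed, the defining set in \eqref{def:U-eps} with $\varepsilon=0$ is nonempty at $t=T$ because $V(0,\vp)=H(\vp)$ by the definition in \eqref{def:V-m-s}, so $U_0(T,\vp)\le T$. Hence $U_0(T,\vp)$ belongs to the admissible class in the supremum \eqref{def:V}, which automatically gives the upper bound
\[
\E^{\vp}\!\left[\int_0^{U_0(T,\vp)} e^{-\rho t} C(\vP_t)\,dt + e^{-\rho U_0(T,\vp)} H(\vP_{U_0(T,\vp)})\right] \le V(T,\vp).
\]
The reverse inequality is precisely Proposition~\ref{prop:U-eps-optimal-rule} applied with $\varepsilon=0$. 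Combining both inequalities gives equality, so $U_0(T,\vp)$ attains the supremum defining $V(T,\vp)$ and is an optimal stopping rule for \eqref{def:V}.

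Next, I would transfer this optimality to the original formulation \eqref{def:U}, recalling that $U(T,\vp)=V(T,\vp)$ as stated before \eqref{def:V}. For any admissible pair $(\tau,d)$ with $\tau\le T$ and $d\in\Fc^X_\tau$, conditioning on $\Fc^X_\tau$ and using $\Pi^{(i)}_\tau=\P^{\vp}\{M_\tau=i\mid\Fc^X_\tau\}$ yields
\[
\E^{\vp}\!\left[\sum_{k\in\A}1_{\{d=k\}}\sum_{i\in E}\mu_{k,i}1_{\{M_\tau=i\}}\,\Big|\,\Fc^X_\tau\right]
=\sum_{k\in\A}1_{\{d=k\}}H_k(\vP_\tau)\le H(\vP_\tau),
\]
with equality precisely when $d\in\arg\max_{k\in\A}H_k(\vP_\tau)$, i.e., under the prescription \eqref{def:d-tau}. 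Applying this identity with $\tau=U_0(T,\vp)$ and $d=d(U_0(T,\vp))$, then using the tower property and the first part of the proof, shows that the criterion in \eqref{def:U} evaluated at this pair equals
\[
\E^{\vp}\!\left[\int_0^{U_0(T,\vp)} e^{-\rho t} C(\vP_t)\,dt + e^{-\rho U_0(T,\vp)} H(\vP_{U_0(T,\vp)})\right] = V(T,\vp) = U(T,\vp).
\]
Since $U(T,\vp)$ is by definition the supremum over all admissible pairs, $(U_0(T,\vp),d(U_0(T,\vp)))$ attains it and is therefore an optimal admissible strategy for \eqref{def:U}.

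The only subtle point, which is in fact already discharged by Proposition~\ref{prop:U-eps-optimal-rule} being stated for all $\varepsilon\ge 0$ (not just $\varepsilon>0$), is that the stopping region is actually reached in finite time at $\varepsilon=0$; this is guaranteed by the finite-horizon terminal identity $V(0,\cdot)=H(\cdot)$. Everything else is a one-line lift from \eqref{def:V} back to \eqref{def:U} via the measurable selector \eqref{def:d-tau}.
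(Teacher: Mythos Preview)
Your proposal is correct and follows the same route as the paper: the corollary is stated there as an immediate consequence of Proposition~\ref{prop:U-eps-optimal-rule} (applied with $\varepsilon=0$), together with the reduction $U(T,\vp)=V(T,\vp)$ and the selector \eqref{def:d-tau} from Section~\ref{sec:probStat}. You have simply written out the details the paper leaves implicit, including the observation that $V(0,\cdot)=H(\cdot)$ guarantees $U_0(T,\vp)\le T$.
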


\begin{proof}[\text{Proof of Proposition~\ref{prop:U-eps-optimal-rule}}]
Let us define
\begin{align}
\label{def:martingale-M} Z_t \triangleq \int_{0 }^{ t } e^{- \rho u} C(\vP_u ) \,du + e^{-\rho t}
\, V(s-t,\vP_t), \qquad t \in [0,s],
\end{align}
which is a bounded process on $t \in [0,s] \subseteq [0,T]$.
We will show that  the stopped process $\{ Z_{t \wedge U_{\varepsilon}(s,\vp)} \}_{t \in [0,s]} $ is
a martingale and satisfies
\begin{align}
\label{eq:M-martingale} \E^{\vp}[ Z_{U_{\varepsilon} (s,\vp) }] = Z_0 = V(s,\vp).
\end{align}
The process $Z$ captures the natural idea that one should not stop as long as the value
function (i.e.\ the continuation value) is larger than the immediate reward. Note that
$\varepsilon$-optimality of $U_{\varepsilon} (s,\vp) $ follows easily from
\eqref{eq:M-martingale} since this equality would imply
$V(s,\vp) 
=  \E^{\vp} \left[ Z_{U_\varepsilon (s,\vp) }\right]  =$
\begin{multline}
\label{eq:eps-optimality}
 \E^{\vp}\left[
  \int_{0 }^{ U_\varepsilon(s,\vp) } \!\!e^{- \rho t} C(\vP_t )\, dt + e^{-\rho U_\varepsilon(s,\vp)} V(s-U_\varepsilon(s,\vp), \vP_{U_\varepsilon(s,\vp)}) \right]
\le \E^{\vp}\left[   \int_{0 }^{ U_\varepsilon(s,\vp) } \!\!e^{- \rho t} C(\vP_t ) \,dt +
\right. \\
\left. e^{-\rho U_\varepsilon (s,\vp) } \left( H( \vP_{ U_\varepsilon (s,\vp ) } ) + \varepsilon \right) \right]
\le \E^{\vp}\left[   \int_{0 }^{ U_\varepsilon(s,\vp) } e^{- \rho t} C(\vP_t ) \, dt +
 e^{-\rho U_\varepsilon (s,\vp) } H( \vP_{ U_\varepsilon (s,\vp ) } )  \right]+ \varepsilon,
\end{multline}
due to regularity of the paths $t \mapsto V(t, \vP_t) - H(\vP_t)$. In the remainder of the proof we will show \eqref{eq:M-martingale} by establishing
\begin{align}
\label{eq:M-local-martingale} \E^{\vp}[ Z_{U_\varepsilon(s,\vp) \wedge \sigma_m}] = Z_0 ,
\qquad \text{for $m=1,2,\ldots,$ }
\end{align}
inductively. After taking the limit as $m \to \infty$ in the equality above, we will then obtain
\eqref{eq:M-martingale} due to bounded convergence theorem.

First, consider the equality \eqref{eq:M-local-martingale} for $m=1$. Recall that
$U_{\varepsilon}(s,\vp) \wedge \sigma_1 = r_{\varepsilon}(s,\vp) \wedge \sigma_1$. Then
$\E^{\vp}[ Z_{U_\varepsilon(s,\vp) \wedge \sigma_1}]= \E^{\vp} [ Z_{r_{\varepsilon}(s,\vp) \wedge
\sigma_1} ]=$
\begin{align*}
&  \E^{\vp}\Biggl[
 \int_{0 }^{  r_{\varepsilon}(s,\vp) \wedge \sigma_1} \!\! e^{- \rho t} C(\vP_t ) dt +
1_{\{ \sigma_1 \le r_{\varepsilon}(s,\vp)\}} \cdot e^{-\rho
\sigma_1} V(s-\sigma_1, \vP_{\sigma_1}) + 1_{\{ \sigma_1 > r_{\varepsilon}(s,\vp) \} } \cdot e^{-\rho r_{\varepsilon}(s,\vp)} H(\vP_{r_{\varepsilon}(s,\vp) }) \\
& \hspace{1.7in} +
1_{\{ \sigma_1 > r_{\varepsilon}(s,\vp) \} } \cdot e^{-\rho \, r_{\varepsilon}(s,\vp)} \left(
V ( s-r_{\varepsilon}(s,\vp), \vP_{r_{\varepsilon}(s,\vp) } ) - H( \vP_{r_{\varepsilon}(s,\vp) } ) \right) \Biggr] \\
& = JV(r_{\varepsilon}(s,\vp),s,\vp) + e^{-\rho \, r_{\varepsilon}(s,\vp)} \cdot \P^{\vp}\{ \sigma_1 > r_{\varepsilon}(s,\vp) \} \cdot \bigl(V(s-r_{\varepsilon}(s,\vp), \vx(r_{\varepsilon}(s,\vp),\vp)) -
H(\vx(r_{\varepsilon}(s,\vp),\vp)) \bigr)
\\ & = \sup_{ u \in [r_{\varepsilon}(s,\vp)  ,s]} JV(u,s,\vp),
\end{align*}
where we used Proposition ~\ref{prop:V-n-fixed-point} and
Corollary~\ref{cor:J-dyn-prog-with-J-0} for the last equality. By
Remark~\ref{rem:V-t-u-supremum}, we get
\begin{align*}
\E^{\vp}\left[ Z_{U_\varepsilon(s,\vp) \wedge \sigma_1}\right]  = \sup_{u \in
[r_{\varepsilon}(s,\vp) , s]} JV(u,s,\vp) = J_0 V(s,\vp) = V(s,\vp) = Z_0,
\end{align*}
and this establishes the result for $m=1$.

Now suppose by induction that \eqref{eq:M-local-martingale}
is true for $m \ge 1$ and consider the equality
\begin{align}
\label{eq:for-m-ge-1}
\begin{aligned}
&\E^{\vp} \left[ Z_{U_{\varepsilon}(s,\vp) \wedge \sigma_{m+1}} \right]  = \E^{\vp} \Bigl[
1_{\{ U_{\varepsilon}(s,\vp) < \sigma_1 \} } Z_{U_{\varepsilon}(s,\vp) } + 1_{\{
U_{\varepsilon}(s,\vp) \ge \sigma_1 \} } Z_{  U_{\varepsilon}(s,\vp) \wedge \sigma_{m+1} }
 \Bigr]  \\
& \hspace{1.2in}= \E^{\vp} \Biggl[
  1_{\{ U_{\varepsilon}(s,\vp) < \sigma_1 \} }
  \Bigl(
\int_{0 }^{ U_\varepsilon(s,\vp) } \!\! e^{- \rho t} C(\vP_t ) \, dt + e^{-\rho
U_{\varepsilon}(s,\vp)} V(s-U_\epsilon(s,\vp), \vP_{U_{\varepsilon}(s,\vp) } )
\Bigr) \\
&+ 1_{ \{ U_{\varepsilon}(s,\vp) \ge  \sigma_1  \} } \left( \int_{0 }^{ U_{\varepsilon}(s,\vp)
\wedge \sigma_{m+1} } \!\! e^{- \rho t} C(\vP_t ) \, dt +
 e^{-\rho \, U_{\varepsilon}(s,\vp) \wedge \sigma_{m+1} }
V \left( s- U_{\varepsilon}(s,\vp) \wedge
\sigma_{m+1} , \vP_{U_{\varepsilon}(s,\vp) \wedge \sigma_{m+1}} \right)
\right)
\Biggr].
\end{aligned}
\end{align}
On the event $\{ U_{\varepsilon}(s,\vp) \ge \sigma_1 \} $, we have $U_{\varepsilon}(s,\vp) \wedge \sigma_{m+1} = \sigma_1 + [U_{\varepsilon}(s,\vp) \wedge
\sigma_m]\circ \theta_{\sigma_1}$, where $\theta $ is the time-shift operator on $\Omega$; i.e., $X_t \circ \theta_s = X_{t + s}$. Using the strong Markov property of $\vP$, 
equation \eqref{eq:for-m-ge-1} becomes $\E^{\vp}[ Z_{U_{\varepsilon}(s,\vp) \wedge \sigma_{m+1}}]  =$
\begin{multline}
\label{eq:strong-markov}
 \E^{\vp} \Bigl[ 1_{\{ U_{\varepsilon}(s,\vp) < \sigma_1 \} }
\left( \int_{0 }^{ U_\varepsilon(s,\vp) } \! \! e^{- \rho t} C(\vP_t ) \, dt + e^{-\rho
U_{\varepsilon}(s,\vp)} V(s-U_\epsilon, \vP_{U_{\varepsilon}(s,\vp) } )
\right)\\
+ \int_{ 0 }^{ \sigma_1 } \! e^{- \rho t} C(\vP_t ) dt + 1_{ \{ U_{\varepsilon}(s,\vp) \ge
\sigma_1  \} } e^{-\rho \sigma_1}  \, f(s- \sigma_1 , \vP_{\sigma_1}) \Bigr],
\end{multline}
where $f(u, \vp ) \triangleq$
\begin{align}
\label{def:f-and-equal-V}
  \E^{\vp}
\left[ \int_{ 0 }^{ U_\varepsilon(s,\vp) \wedge \sigma_m } \! \! e^{- \rho t} C(\vP_t ) dt +
 e^{-\rho \, U_{\varepsilon}(s,\vp) \wedge \sigma_m }  V(u - U_{\varepsilon}(s,\vp) \wedge \sigma_m,
\vP_{U_{\varepsilon}(s,\vp) \wedge \sigma_{m}} ) \right] = V(u, \vp ),
\end{align}
by the induction hypothesis for $m$. Combining \eqref{eq:strong-markov} and
\eqref{def:f-and-equal-V} we get $ \E^{\vp}[ Z_{U_{\varepsilon}(s,\vp) \wedge \sigma_{m+1}}] =$
\begin{align*}
   &\E^{\vp}\Biggl[ 1_{ \{ U_{\varepsilon}(s,\vp) < \sigma_1 \} }
\left( \int_{0 }^{ U_\varepsilon(s,\vp) } \!\! e^{- \rho t} C(\vP_t ) dt + e^{-\rho
U_{\varepsilon}(s,\vp)} V(s-U_\epsilon, \vP_{U_{\varepsilon}(s,\vp) } )
\right) \Biggr] \\
&\hspace{1.5in}+ \E^{\vp}\Bigl[ 1_{ \{  U_{\varepsilon}(s,\vp) \ge \sigma_1  \} } \left( \int_{
0 }^{ \sigma_1 } e^{- \rho t} C(\vP_t ) dt + e^{-\rho \sigma_1} V(s-\sigma_1,\vP_{\sigma_1})
\right)
\Bigr] \\
& = \E^{\vp}\Bigl[ 1_{ \{ U_{\varepsilon}(s,\vp) < \sigma_1 \} } Z_{ U_{\varepsilon}(s,\vp) } +
1_{ \{  U_{\varepsilon}(s,\vp) \ge \sigma_1  \} } Z_{ \sigma_1 } \Bigr]
= \E^{\vp}\left[ Z_{U_{\varepsilon}(s,\vp) \wedge \sigma_1} \right] = Z_0,
\end{align*}
where the last equality follows from our result for $m=1$. Hence we have $ \E^{\vp}\left[
Z_{U_{\varepsilon}(s,\vp) \wedge \sigma_{m+1}} \right] = Z_0 $ and this gives
\eqref{eq:M-local-martingale} for $m+1$.
\end{proof}

\subsection{Stopping and continuation regions.} Let
\begin{align}
\label{def:stop-cont-regions}
\begin{aligned}
\mathcal{C}_T &\triangleq \left\{ (s,\vp) \in [0,T] \times D : V(s, \vp) > H(\vp) \right\}, \\
\Gamma_{T} &\triangleq \left\{ (s,\vp) \in [0,T] \times D : V(s, \vp) = H( \vp ) \right\}
\end{aligned}
\end{align}
denote the continuation and stopping regions respectively. The stopping region can further be
decomposed as the union $\cup_{k \in \mathcal{A} } \Gamma_{T,k}$ of the regions 
\begin{align}
\label{def:accept-reject-regions}
\begin{aligned}
\Gamma_{T,k} &\triangleq \left\{ (s,\vp) \in [0,T] \times D : V(s, \vp) = H_k(\vp) \right\},
\qquad k \in \mathcal{A},
\end{aligned}
\end{align}
where $H_k$ is defined in \eqref{def:h}.
Corollary~\ref{cor:U-0} states that in the optimal solution $\big( U_0(T,\vp) , d(U_0(T,\vp))
\big)$, one observes the process $\vP$ until $U_0(T,\vp)$, whence it enters the region
$\Gamma_T$. At this time, if $\vP$ is in the set $\Gamma_{T,k}$ we take $d(U_0(T,\vp))=k$; that
is, we select the $k$'th action in the action set $\mathcal{A}$.
\begin{rem}\label{rem:stoppingHorizon} \normalfont
The definition of the value function $V$ in \eqref{def:V} implies that the mapping $ s \mapsto
V(s,\vp)$ is non-decreasing.
Therefore if $(s,\vp) \in \Gamma_{T,k}$ for some
$(s,\vp) \in [0,T] \times D$, then we have $(t,\vp) \in \Gamma_{T,k}$ for all $t \le s$. In
other words, each region $\Gamma_{T,k}$ is growing
and the continuation region $\mathcal{C}_T$ is shrinking as time to maturity decreases.
\end{rem}
\begin{rem} \normalfont \label{lem:convexAdoption}
For fixed $s \le T$, let $(s,\vp_1) $ and $(s,\vp_2) $ be two points in the 
region
$\Gamma_{T,k}$, and let $\alpha \in(0,1)$. As the upper envelope of convex mappings $\vp \to
v_m(s,\vp)$ (see Lemma~\ref{lem:properties-of-v-m-s} and Corollary~\ref{cor:V-equal-v}), the
mapping $\vp \to V(s,\vp)$ is convex for each $s \in [0,T]$. Using this property we obtain
\begin{multline*}
H_k(\alpha \cdot \vp_1 + (1- \alpha )\cdot \vp_2) ) \le V(s, \alpha \cdot \vp_1 + (1- \alpha )\cdot \vp_2)
\le \alpha \cdot V(s , \vp_1) + (1- \alpha ) \cdot V(s , \vp_2) \\ = \alpha  \cdot H_k( \vp_1) + (1- \alpha )  \cdot H_k(  \vp_2) = H_k (\alpha \cdot \vp_1 + (1- \alpha )\cdot \vp_2) ),
\end{multline*}
which implies that 
$(s,  \alpha \cdot \vp_1 + (1- \alpha )\cdot \vp_2) \in \Gamma_{T,k}$, and the region $\Gamma_{T,k} \cap (  \{ s\} \times D)$ is convex for each fixed $s \le T$ and $k\in A$.
\end{rem}

\begin{rem}\label{rem:stoppingNeverEmpty} \normalfont
The stopping region is never empty since the decision maker has to select an action
eventually, the latest at the terminal time $T$. That is, $\Gamma_{T} \supseteq \{  (0, \vp );
\vp \in D \} \ne \emptyset  $. The region $\{ (s,\vp) \in \Gamma_{T} : s >0\} $  may however be empty. In an example where $ \min_{i \in E} c_i   > 0 $ and $ \mu_{k,i} $'s are all the same it is never optimal to stop prior to terminal time $T$.

Note that the region $\{ (s,\vp) \in \Gamma_{T} : s >0\} $ may be non-empty but still may
have an empty interior. For example, let us consider the hypothesis testing in \eqref{def:R3}.
In this \emph{minimization} problem, all the states of the unobservable Markov process are
absorbing, and each component $\Pi^{(i)}_t = \P \{ M_t =i | \Fc^X_t \}=   \P \{  M_0=i | \Fc^X_t
\}$ of process $\vP$ is a martingale. Since the terminal reward function of the corresponding
stopping problem (see \eqref{def:h}) $H(\cdot) = \min_{k \in E} H_k(\cdot)$ is concave, the
process $H(\vP_t)$ is a supermartingale on $[0,T]$. If we select $\rho=0$ and $c_i =0 $ for all
$i \in E$ in \eqref{def:R3}, it is therefore never optimal to stop early on the interior of $\{
(s,\vp) \in \Gamma_{T} : s >0\} $. In this case, there is no penalty associated with a delay in
the decision. Hence the DM will choose to observe it as much as possible prior to a decision
unless she knows for sure which hypothesis is correct.
\end{rem}

\begin{lemma}
\label{lemm:continue-at-good-states}
For $i \in E$, let $\mathcal{A}^*(i) \triangleq \{ k \in \mathcal{A} : \, \mu_{k,i} = \max_{j \in \mathcal{A} } \mu_{j,i} \}$. If the inequality $c_i - \rho \mu_{k,i} + \sum_{j \ne i} (\mu_{k,j}- \mu_{k,i}) q_{i,j} > 0$ holds for all $k \in \mathcal{A}^*(i)$, then there exists $\pi^c_{i} < 1$ such that $\{ (s , \vp) \in (0,T] \times D: \, \pi_i \ge \pi^c_i  \} \subseteq \mathcal{C}_T$. Moreover, $\pi_i^c $ can be selected independent of $T$. 
\end{lemma}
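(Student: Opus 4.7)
The plan is to lower bound $V(s,\vp)$ by the payoff of a simple admissible rule --- stop at the deterministic time $\tau = s$ and take some action $k^*(\vp) \in \arg\max_{k\in\A} H_k(\vp)$ --- and show this payoff strictly exceeds $H(\vp)$ for small $s$ whenever $\pi_i$ is sufficiently close to $1$. The conclusion then extends to all $s \in (0,T]$ via the monotonicity of $V(\cdot,\vp)$ from Remark~\ref{rem:stoppingHorizon}.

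Concretely, for $k \in \A$ set
\[
A(\vp, s, k) \triangleq \E^{\vp}\!\left[ \int_0^s e^{-\rho t} C(\vP_t)\, dt + e^{-\rho s} H_k(\vP_s) \right],
\]
so that $V(s,\vp) \ge A(\vp, s, k)$ for every $k$. Using linearity of $H_k$ together with the Kolmogorov forward equation $\tfrac{d}{dt}\P^{\vp}\{M_t = j\} = \sum_{i'} \P^{\vp}\{M_t = i'\} q_{i',j}$, a direct differentiation yields
\[
\left.\frac{\partial A}{\partial s}(\vp, s, k)\right|_{s=0} = \sum_{i' \in E} \pi_{i'}\!\left[ c_{i'} - \rho \mu_{k,i'} + \sum_{j\ne i'} q_{i',j}(\mu_{k,j} - \mu_{k,i'}) \right],
\]
with second derivative uniformly bounded by some constant $C^*$ on $D \times [0,T] \times \A$, since $e^{sQ}$ and its $s$-derivatives are bounded on $[0,T]$. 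At $\vp = \vec{e}_i$ (the vertex of $D$ with $\pi_i = 1$) and $k \in \A^*(i)$ the bracketed quantity equals the hypothesized $\delta_k(i) > 0$; set $\delta \triangleq \tfrac12 \min_{k \in \A^*(i)} \delta_k(i)$.

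By continuity in $\vp$ and finiteness of $\A^*(i)$, there exists $\pi_i^c < 1$ such that for every $\vp$ with $\pi_i \ge \pi_i^c$ and every $k \in \A^*(i)$ the derivative above is $\ge 2\delta$. Shrinking $\pi_i^c$ if necessary, continuity of each linear $H_k$ also ensures $\arg\max_{k \in \A} H_k(\vp) \subseteq \A^*(i)$ for such $\vp$ (for $k \notin \A^*(i)$ one has $H_k(\vec{e}_i) < H(\vec{e}_i)$ strictly, which persists in a neighborhood). Picking $k^*(\vp)$ from this argmax gives $H_{k^*(\vp)}(\vp) = H(\vp)$, and Taylor expansion yields, for $s \in (0, s_0]$ with $s_0 \triangleq \delta/C^*$,
\[
V(s,\vp) - H(\vp) \ge A(\vp, s, k^*(\vp)) - H_{k^*(\vp)}(\vp) \ge 2\delta s - C^* s^2 \ge \delta s > 0.
\]
For $s > s_0$, monotonicity gives $V(s,\vp) \ge V(s_0,\vp) > H(\vp)$. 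Since $\delta$, $C^*$, $s_0$, and $\pi_i^c$ depend only on the primitives $Q,\rho,\{c_j\},\{\mu_{k,j}\}$ and not on $T$, the threshold $\pi_i^c$ is independent of $T$.

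The principal subtlety is the non-smoothness of $H = \max_k H_k$ at ties: I sidestep it by working with the finite set $\A^*(i)$, over which the hypothesis supplies a uniform drift bound, and by verifying that the argmax of $H_k$ near $\vec{e}_i$ is contained in $\A^*(i)$.
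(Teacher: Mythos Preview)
Your argument is correct and takes a genuinely different route from the paper's.

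The paper lower-bounds $V$ by $v_1 = J_0 H$, the first iterate of the dynamic-programming operator, and differentiates $t \mapsto JH(t,\vp)$ at $t=0$. This computation goes through the filter dynamics \eqref{eq:vx-dyn} and the jump operators $S_j$ of \eqref{def:S}; at the vertex $\vec e_i$ the jump of $\vP$ is trivial (so $S_i H_k(\vec e_i)=H_k(\vec e_i)$), which is what makes the derivative collapse to $c_i-\rho\mu_{k,i}+\sum_{j\ne i}(\mu_{k,j}-\mu_{k,i})q_{i,j}$. Positivity of this derivative gives $v_1(s,\vp)>H(\vp)$ for every $s>0$ directly.

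You instead lower-bound $V$ by the payoff of the crude rule $\tau=s$ with a fixed action $k$, and then use the tower property to eliminate $\vP$ entirely: since $H_k$ is linear, $\E^{\vp}[H_k(\vP_s)]=\sum_j\mu_{k,j}\,\P^{\vp}\{M_s=j\}$ depends only on the \emph{unconditional} law of $M$, governed by $e^{sQ}$. The filtering dynamics never appear, and the derivative at $s=0$ follows from the Kolmogorov forward equation alone. The price is that you need a second-order Taylor bound and then the monotonicity of $s\mapsto V(s,\vp)$ (Remark~\ref{rem:stoppingHorizon}) to extend from $s\in(0,s_0]$ to all $s\in(0,T]$, whereas the paper's first-derivative argument handles all $s>0$ at once. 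On balance your route is more elementary.

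One small point worth tightening: you justify the uniform bound $C^*$ on the second $s$-derivative by saying ``$e^{sQ}$ and its $s$-derivatives are bounded on $[0,T]$,'' which on its face would make $C^*$, and hence $s_0$, depend on $T$. What you actually need (and what is true) is that $e^{sQ}$ is stochastic, so its entries lie in $[0,1]$ for \emph{all} $s\ge 0$; together with $e^{-\rho s}\le 1$ this gives a $T$-free bound on $\partial_s^2 A$ in terms of $\|Q\|$, $\|Q^2\|$, $\rho$, $\max_j|c_j|$, and $\max_{k,j}|\mu_{k,j}|$. With that clarification your $T$-independence claim for $\pi_i^c$ is fully justified.
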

If the hidden process $M$ is known to be in state $i \in E$, then the expression $ -\rho
\mu_{k,i}$ is the instantaneous decay of the payoff from selecting action $k \in \mathcal{A}$ immediately, and $c_i$ is the
instantaneous cost of waiting. Moreover, under action $k \in \mathcal{A}$, the term $\sum_{j \ne i} (\mu_{k,j}- \mu_{k,i})
q_{k,j}$ is the marginal rate of return from waiting for the hidden process $M$ to jump to
another state. Therefore the sum of these three terms appearing in Lemma
\ref{lemm:continue-at-good-states} is the instantaneous net return enjoyed by the DM under
action $k \in \mathcal{A}$. Lemma~\ref{lemm:continue-at-good-states} indicates that if there is strong posteriori evidence that $M$ is in state $i$, and if the instantaneous net return is positive under all favorable actions (whose terminal reward $H_k$ dominates others around the $i$'th corner of $D$), the decision maker should not stop at that point (unless $T=0$).

\subsection{Stopping regions for reward maximization with running cost.}
Here, we consider the problem in \eqref{def:V} with the assumption $c_i \le 0$ (running costs)
for $i \in E$, and $\bar{\mu} \triangleq  \max_{k,i} \mu_{k,i} >  0$ (terminal rewards).
The second condition is not restrictive if $\rho =0$ since we can always add (and
subtract) the same constant to (and from) the terminal reward function.

Let us define
\begin{align}
\label{optimal-notation}
   I^* \triangleq \{ i \in E : \, \max_{k \in \mathcal{A}} \mu_{k,i} = \bar{\mu} \},
 \end{align}
 which is the set of the states of $M$, at which the DM can get the highest terminal reward.
Since $c_i \le 0$ for all $i \in E$, we have $\cup_{i \in I^* } \{ (s,\vp) : s\in [0,T] \, , \,
\pi_{i} =1  \}  \subset \Gamma_{T} $. That is the DM stops whenever the process $\vP$ reaches a
point of global maximum of the terminal reward function $H(\cdot)$.

In general, if there is a penalty associated with waiting, we expect that it is optimal to stop on the points $(s,\vp)$ for which the ``best'' component $\pi_{i}$, $i \in I^*$, 
is sufficiently high, for any $s> 0$. Lemma~\ref{lem:nonempty-acceptance-region} provides a sufficient condition for this to be true. It implies that if the discount rate is  strictly positive, or if the cost of waiting for the highest reward is strictly positive, then we stop whenever $\pi_{i}$, for $i \in I^*$, is relatively high regardless of the remaining time to maturity.

\begin{lemma}
\label{lem:nonempty-acceptance-region}
Let $i \in I^*$. If $\rho > 0 $, or $ c_i < 0$, then
there exists a number $\pi^{s}_{i}  < 1$ such that
\begin{align*}
\Gamma_{T} 
\supseteq \{ (s, \vp)  \in [0,T] \times D \, : \, \pi_{i} \ge \pi^{s}_{i} \},
\end{align*}
and the value of $\pi^{s}_{i} $ can be selected free of the time to maturity $T$.
\end{lemma}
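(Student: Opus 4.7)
The strategy is a verification argument using the linear test functions $H_{k^*}$ for $k^* \in \mathcal{A}^*(i)$. Linearity of $H_{k^*}$ in $\vp$ makes Dynkin's formula immediately available for the PDP $\vP$ with the explicit generator $\mathcal{L}$, reducing the problem to showing that an instantaneous net rate is strictly negative on a $T$-independent neighborhood of $e_i$.

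\textbf{Step 1 (instantaneous rate at $e_i$).} The jump map in~\eqref{eq:rel-pi-x} at $\vp=e_i$ returns $e_i$, so the jump part of $\mathcal{L}$ vanishes there, and a direct drift computation yields, for each $k^* \in \mathcal{A}^*(i)$,
\[
 g_{k^*}(e_i) := \bigl((\mathcal{L}-\rho)H_{k^*} + C\bigr)(e_i) = c_i - \rho\bar\mu + \sum_{j\ne i} q_{i,j}(\mu_{k^*,j}-\bar\mu).
\]
Each summand is $\le 0$ (using $c_i \le 0$, $\bar\mu > 0$ and $\mu_{k^*,j}\le\mu_{k^*,i}=\bar\mu$), and the hypothesis $\rho>0$ or $c_i<0$ forces at least one strict inequality, giving $g_{k^*}(e_i)<0$. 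By continuity and finiteness of $\mathcal{A}^*(i)$, there exist $\eta_1,\varepsilon_1 > 0$ -- depending only on the problem data, not on $T$ -- such that $g_{k^*}(\vp)\le -\varepsilon_1$ for all $k^*\in\mathcal{A}^*(i)$ and $\vp \in U_1:=\{\vp \in D:\pi_i\ge 1-\eta_1\}$. Shrinking $\eta_1$ further, I may also ensure that on $U_1$: (a) the maximum $H(\vp)$ is attained by some $k^*(\vp)\in\mathcal{A}^*(i)$; and (b) for the pointwise choice $k^*=k^*(\vp)$, the gap $H-H_{k^*}$ satisfies $0 \le (H-H_{k^*}) \le 2\|\mu\|(1-\pi_i) \le 2\|\mu\|\eta_1$ on $U_1$ (both maximizers agree in the $i$-th coordinate, making the gap linear and vanishing at $e_i$).

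\textbf{Step 2 (Dynkin identity for the gap).} Fix $\vp\in U_1$, set $k^*=k^*(\vp)$, so $H_{k^*}(\vp)=H(\vp)$. Applying Dynkin's formula to $H_{k^*}$ along $\vP$, for any $\F$-stopping time $\tau\le s$,
\[
\E^\vp\Bigl[\int_0^\tau e^{-\rho u}C(\vP_u)du + e^{-\rho\tau}H(\vP_\tau)\Bigr] - H(\vp) = \E^\vp\Bigl[\int_0^\tau e^{-\rho u}g_{k^*}(\vP_u)du + e^{-\rho\tau}(H-H_{k^*})(\vP_\tau)\Bigr].
\]
Taking the supremum over $\tau$ and using $V\ge H$,
\[
0 \le V(s,\vp)-H(\vp) = \sup_{\tau\le s}\E^\vp\Bigl[\int_0^\tau e^{-\rho u}g_{k^*}(\vP_u)du + e^{-\rho\tau}(H-H_{k^*})(\vP_\tau)\Bigr],
\]
so it suffices to show the right-hand side is $\le 0$ for $\pi_i$ sufficiently close to $1$, with threshold independent of $T$.

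\textbf{Step 3 (bounding and main obstacle).} Splitting each path at the exit time $\tau_{U_1}:=\inf\{u:\vP_u\notin U_1\}$: on $\{u<\tau_{U_1}\}$ the integrand is $\le-\varepsilon_1$ and $(H-H_{k^*})(\vP_u)$ is capped by $2\|\mu\|\eta_1$; on $\{u\ge\tau_{U_1}\}$ both are bounded by global constants $G$ and $B:=\|H-H_{k^*}\|_\infty$. The dominant negative contribution is $-\varepsilon_1\E^\vp[\int_0^{\tau\wedge\tau_{U_1}}e^{-\rho u}du]$ and must absorb the positive error terms $G\E^\vp[\int_{\tau_{U_1}}^\tau e^{-\rho u}du]$ and $B\,\P^\vp\{\vP_\tau\notin U_1\}$. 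The main obstacle is controlling the latter uniformly in $T$. For $\rho>0$, the discount factor supplies exponential damping that bounds both $\E^\vp[e^{-\rho\tau_{U_1}}]$ and the residual integral by functions of $(1-\pi_i)$ alone (not $T$). For $\rho=0$ with $c_i<0$, the strictly negative running cost $C(\cdot)\le c_i/2<0$ on a suitably small $U_1$ plays the role of an effective discount and yields a martingale-type $T$-free exit-time tail. Choosing $\pi_i^s<1$ depending only on $\varepsilon_1,\eta_1,\|\mu\|,\rho,|c_i|,\bar\lambda$ and the rates $q_{i,j}$ then forces the right-hand side $\le 0$ for all $\pi_i\ge\pi_i^s$ and all $s\in[0,T]$, giving $V(s,\vp)=H(\vp)$ and $(s,\vp)\in\Gamma_T$, with $\pi_i^s$ independent of $T$.
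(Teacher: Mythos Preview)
Your Steps~1 and~2 are correct and clean: the linearity of $H_{k^*}$ makes Dynkin's formula immediate, and the computation of $g_{k^*}(e_i)$ is right (indeed $g_{k^*}$ is \emph{globally} linear in $\vp$, since $\mathcal{L}\pi_j=\sum_l q_{l,j}\pi_l$). This is a genuinely different route from the paper, which never invokes Dynkin for the full process $\vP$ but instead works entirely through the one-step operator $J_0$ and the deterministic flow $\vx(\cdot,\vp)$ between jumps.

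The gap is in Step~3. Your exit-time decomposition does not close. On $\{\tau\le\tau_{U_1}\}$ the terminal contribution $e^{-\rho\tau}(H-H_{k^*})(\vP_\tau)$ is only bounded by $2\|\mu\|\eta_1$, while the negative integral $-\varepsilon_1\int_0^\tau e^{-\rho u}\,du$ can be made arbitrarily small by taking $\tau$ small; these do not balance. On $\{\tau>\tau_{U_1}\}$ you need $\E^{\vp}[e^{-\rho\tau_{U_1}}]$ to be small when $\pi_i$ is close to $1$, but since $U_1$ is a \emph{fixed} neighbourhood, $\tau_{U_1}$ converges in law (as $\vp\to e_i$) to the exit time from $U_1$ started at $e_i$, which is a fixed finite random variable; hence $\E^{\vp}[e^{-\rho\tau_{U_1}}]$ converges to a strictly positive constant, not to zero. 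So neither piece of the split is under control, and the final claim that a suitable $\pi_i^s$ forces the right-hand side $\le 0$ is not justified. A direct attempt to absorb the terminal gap via Dynkin for $\psi(\vp')=1-\pi'_i$ also fails in general, since $(\mathcal{L}\psi)(e_i)=|q_{ii}|$ can dominate $|g_{k^*}(e_i)|$.

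The paper avoids this obstacle by exploiting the fixed-point identity $V=J_0V$: it suffices to show $J_0w\le H$ near $e_i$ for any bounded $w\le\bar\mu$, and $J_0$ involves only the deterministic flow up to the \emph{first} jump $\sigma_1$, with the post-jump value crudely bounded by $\bar\mu$. This reduces everything to derivative estimates for an explicit function $F_k(t,\vp)$ of $t$ and the flow $\vx(t,\vp)$, together with a deterministic lower bound on the exit time of $\vx(\cdot,\vp)$ from a threshold level---quantities that \emph{do} tend to the right limits as $\pi_i\to 1$. If you want to salvage your approach, the cleanest fix is to stop at $\tau\wedge\sigma_1$ rather than $\tau$ in Step~2, which is exactly the reduction the paper's $J_0$-argument encodes.
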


\begin{rem}\label{rem:VincreasesInRho}\normalfont
If $H(\cdot) \ge 0$, the statement of the stopping problem in \eqref{def:V} implies
that the value function $V$ is non-increasing as a function of the discount factor $\rho$. If
we denote the dependence of the stopping region on $\rho$ with $\Gamma_T(\rho)$, then we have
$\Gamma_T(\rho_1) \subseteq \Gamma_T(\rho_2)$ whenever $\rho_1 \le \rho_2$.
Moreover, the dynamics of the process $\vP$ are independent of $\rho$ and $U_0(s,\vp)$ is the hitting time of $\vP$ to $\Gamma_T$. Therefore, the time that the DM can afford for observing the process $X$ in the presence of a lower discount factor is no less than that spent under heavier discounting.

A similar claim also holds for dependence of $U_0(s,\vp)$ and $\Gamma_T$ on the running costs $c_i$. Namely, an observer with lower (in absolute value) running costs stops no sooner than another one with heavier running costs.
\end{rem}

\subsection{A nearly-optimal strategy.}
\label{subsec:nearly-optimal-rules}
On a practical level, one cannot compute $V$ directly, but instead computes the approximate
value functions $V_m$'s defined in \eqref{def:V-m-s} and employs the corresponding
nearly-optimal strategies (see \ref{def:m-eps-stopping-rule}). It is therefore important to
know the error associated with this approximation.

For a given error level $\varepsilon > 0$, let us fix
\begin{align*}
m = \inf \left\{ k \in \N :\; ( T \| C \| + 2\,  \| H \| ) \left(   \frac{  \bar{\lambda} \,  T}{  k-1  }
\right)^{1/2} \cdot \left(  \frac{ \bar{\lambda}  }{ 2 \rho + \bar{\lambda} } \right)^{k/2} \le
\varepsilon /2 \right\} ,
\end{align*}
such that $\| V_m - V \| \le \varepsilon /2  $ on $[0,T] \times D$ via
\eqref{eq:uniform-bound}. Next, let us define the stopping times
\begin{align}
\label{def:m-eps-stopping-rule}
U^{(m)}_{\varepsilon/2}(s,\vp) \triangleq \inf \{ t \in [ 0, s ]  :\,   V_m(s,\vP_t) -\varepsilon/2 \le H(\vP_t)  \}.
\end{align}
The regularity of the paths $t \mapsto \vP_t$ implies that $V \left(  U^{(m)}_{\varepsilon/2}(s,\vp),
\vP_{U^{(m)}_{\varepsilon/2}(s,\vp)} \right) - H\left( \vP_{U^{(m)}_{\varepsilon/2}(s,\vp)} \right) \le
\varepsilon$. Then the arguments in the proof of  Proposition~\ref{prop:U-eps-optimal-rule}
(see \eqref{def:martingale-M}, \eqref{eq:M-martingale}, and \eqref{eq:eps-optimality}) can easily be
modified to show that
\begin{align}
\label{ineq:m-eps-optimality}
\begin{aligned}
V(s,\vp) &=  \E^{\vp}\left[
  \int_{0 }^{ U^{(m)}_\varepsilon(s,\vp) } \!\! e^{- \rho t} C(\vP_t ) \,dt + e^{-\rho U^{(m)}_\varepsilon(s,\vp)} V \left(s-U^{(m)}_\varepsilon(s,\vp), \vP_{U^{(m)}_\varepsilon}(s,\vp)\right) \right]
\\
&\le \E^{\vp}\left[
  \int_{0 }^{ U^{(m)}_\varepsilon(s,\vp) } \!\! e^{- \rho t} C(\vP_t ) \,dt + e^{-\rho U^{(m)}_\varepsilon(s,\vp)} H\left( \vP_{U^{(m)}_\varepsilon(s,\vp)}\right) \right] + \varepsilon.
\end{aligned}
\end{align}
Hence, if we apply the admissible strategy $\left(U^{(m)}_\varepsilon(T,\vp), d(U^{(m)}_\varepsilon(T,\vp))\right)$, which requires computing \eqref{def:V-m-s} only up to $m$ defined above, the resulting error is no more than $\varepsilon$.

\subsection{Infinite horizon problem as an approximation}
\label{sec:infinite}
In general, if there is a strict penalty for waiting,
it is likely that the DM will make a decision prior to the final time $T$
for moderate or large values of $T$. In this case,
the constraint $\tau \le T$ in \eqref{def:V} is of less importance, and
one essentially faces an \emph{infinite horizon} stopping problem. Solving the infinite horizon problem can be computationally more appealing since we eliminate the time-dimension of the state space $[0,T] \times D$. Below, we show
that the value function of the finite-horizon problem converges uniformly to that of the infinite horizon under the assumption
\begin{align}
\label{in-finite-convergence-assumptions}
\text{``either $\rho > 0$'' or ``$ \max_{i \in E} c_i < 0$''.}
\end{align}
The infinite horizon problem is defined as in \eqref{def:V} (and \eqref{def:U}) by removing the constraint $\tau \le T$.
With the notation in \eqref{def:V}, let 
$ V(\infty, \vp)$  be the value function of this stopping problem.

\begin{lemma}
\label{lem:convergence-of-finite-to-infinite}
As $T \nearrow \infty$, the function $V(T,\vp)$ 
converges to
$V(\infty,\vp)$ uniformly on $D$, and we have
\begin{align}
\label{eq:convergence-of-finite-to-infinite}
V(T, \vp) \le V(\infty, \vp) \le V(T, \vp) +    Err(T)  , \quad \; \text{for all $\vp \in D$ and $T \ge 0$,}
\end{align}
where
\begin{align*}
Err(T) \triangleq \left\{
\begin{aligned}
&e^{- \rho T} ( \|  C\| + 2 \cdot \|  H \|) &,& \quad \text{\normalfont if $\rho > 0$} \\
 &
\frac{2 \cdot \|  H \|}{T} \, \frac{\big( \min_{k,i}\mu_{k,i} - \max_{k,i} \mu_{k,i} \big)}{ \max_{i \in E} c_i}
 &,& \quad \text{\normalfont if $\rho = 0$ and $ \max_{i \in E} c_i < 0$.}
\end{aligned}
\right\}
\end{align*}
\end{lemma}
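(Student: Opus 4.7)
The plan is to verify the two inequalities in \eqref{eq:convergence-of-finite-to-infinite} separately. The left inequality $V(T,\vp) \le V(\infty,\vp)$ is immediate because every $\F$-stopping time bounded by $T$ remains admissible for the infinite-horizon problem, so the supremum over the larger class can only be at least as large. For the right inequality, I would fix $\varepsilon > 0$, select an $\F$-stopping time $\tau^*$ (not necessarily bounded) that is $\varepsilon$-optimal for $V(\infty,\vp)$, and employ its truncation $\tau^* \wedge T$ as an admissible candidate for $V(T,\vp)$. Writing
\[
\Delta(\tau) := \int_0^{\tau} e^{-\rho t} C(\vP_t)\, dt + e^{-\rho\tau}H(\vP_\tau),
\]
it suffices to bound $\E^{\vp}[\Delta(\tau^*) - \Delta(\tau^* \wedge T)]$, which vanishes on $\{\tau^* \le T\}$, and then to send $\varepsilon \downarrow 0$.

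In Case 1 ($\rho > 0$), on the event $\{\tau^* > T\}$ the difference telescopes to $\int_T^{\tau^*} e^{-\rho t}C(\vP_t)\, dt + e^{-\rho \tau^*} H(\vP_{\tau^*}) - e^{-\rho T}H(\vP_T)$, each term of which is controlled in absolute value by the uniform bounds $\|C\|, \|H\|$ together with the discount factor $e^{-\rho T}$; this produces the announced $O(e^{-\rho T})$ error, with no distributional input on $\tau^*$ needed.

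In Case 2 ($\rho = 0$ and $\max_i c_i < 0$) the main obstacle is the absence of any automatic decay, so the argument must instead force $\P^{\vp}\{\tau^* > T\}$ itself to be small. Setting $c^* := \max_i c_i < 0$ and $\Delta\mu := \max_{k,i}\mu_{k,i} - \min_{k,i}\mu_{k,i}$, the key observation is the uniform upper bound $C(\vp) \le c^* < 0$, which combined with the elementary two-sided estimate $\min_{k,i}\mu_{k,i} \le H(\vp) \le \max_{k,i}\mu_{k,i}$ gives the chain
\[
\min_{k,i}\mu_{k,i} - \varepsilon \; \le \; V(\infty,\vp) - \varepsilon \; \le \; \E^{\vp}[\Delta(\tau^*)] \; \le \; c^* \E^{\vp}[\tau^*] + \max_{k,i}\mu_{k,i}.
\]
Rearranging yields the a priori bound $|c^*| \E^{\vp}[\tau^*] \le \Delta\mu + \varepsilon$, and Markov's inequality then gives $\P^{\vp}\{\tau^* > T\} \le (\Delta\mu + \varepsilon)/(|c^*| T)$. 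On $\{\tau^* > T\}$ the running-cost contribution is non-positive (hence favorable), so $\Delta(\tau^*) - \Delta(T) \le H(\vP_{\tau^*}) - H(\vP_T) \le 2\|H\|$; multiplying by the probability estimate and letting $\varepsilon \downarrow 0$ delivers the error $\frac{2\|H\|\,\Delta\mu}{|c^*|\,T}$, which matches the stated formula after unwinding the signs.

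The principal obstacle is precisely the derivation of the uniform-in-$\tau^*$ bound on $\E^{\vp}[\tau^*]$ in Case 2: without exponential discounting, one has to convert the strictly negative running cost into a control on the expected decision time, and without such a bound no truncation argument succeeds. Uniform convergence over $\vp \in D$ is then automatic, because every constant appearing in $Err(T)$ depends only on the global parameters $\|C\|, \|H\|, \Delta\mu, c^*, \rho$ and not on the prior $\vp$.
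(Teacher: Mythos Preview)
Your proposal is correct and follows essentially the same route as the paper: truncate an (almost) optimal infinite-horizon stopping time at $T$, bound the discrepancy on $\{\tau^*>T\}$ by $e^{-\rho T}(\|C\|+2\|H\|)$ when $\rho>0$, and in the undiscounted case use the strictly negative running cost to obtain an a priori bound on $\E^{\vp}[\tau^*]$ and then apply Markov's inequality. The only cosmetic difference is that the paper argues one may restrict a priori to stopping times with $\E[\tau]\le\Delta\mu/|c^*|$ (since any rule with larger expected duration is dominated by immediate stopping) and then takes a supremum, whereas you fix an $\varepsilon$-optimal $\tau^*$ and derive that same bound directly from $\varepsilon$-optimality before letting $\varepsilon\downarrow 0$; the two arguments are equivalent.
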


The explicit error bounds for the rate of convergence allows to approximate $V(T,\cdot)$ with the value function of the infinite horizon problem when $T$ is large. The function $V(\infty, \vp)$ can be computed sequentially as in Section~\ref{sec:sequential}. That is, if we define the non-decreasing sequence
\begin{align}
\label{def:V-m-infty}
 V_m( \infty , \vp) \triangleq   \sup_{ \tau \ge 0 } \E^{\vp} \left[   \int_0^{\tau \wedge \sigma_m} \! e^{- \rho t} C(\vP_t) \, dt +   e^{- \rho \tau \wedge \sigma_m } H( \vP_{\tau  \wedge \sigma_m })   \right] ,
 \qquad m \in \N,
\end{align}
then it can be shown that the elements of this sequence can be computed by applying a functional operator $\hat{J}_0$, which is obtained from the operator $J_0$ in \eqref{eq:J-expectations} after replacing the constraint $t \in [0,s] $ with $t \ge 0 $. Also, note that the new operator $\hat{J}_0$ is defined on the domain of functions defined on $D$ only. The proof of these statements can be obtained by modifying the arguments of Section 3, or those in
\cite[Section 3]{dps}. Moreover, following the proof of Proposition~\ref{prop:uniform-convergence}  and the arguments of Section~\ref{subsec:nearly-optimal-rules}, we have
\begin{align*}
\| V_m( \infty , \cdot) -  V(\infty, \vp) \| \le
Err_{\infty}(m) \triangleq
\left\{
\begin{aligned}
&\left( \frac{ \bar{ \lambda} }{ \rho + \bar{ \lambda} } \right)^m &,& \quad \text{\normalfont if $\rho > 0$,} \\
 &  \left(\frac{\max_{k,i}\mu_{k,i}}{ \max_{i \in E} c_i}\cdot  \frac{ \bar{ \lambda} }{ m-1  }\right)^{1/2} &,& \quad \text{\normalfont if $\rho = 0$ and $ \max_{i \in E} c_i < 0$,}
\end{aligned}
\right.
\end{align*}
and the stopping time
\begin{align}
\label{def:eps-stopping-time-for-truncated-inifinite-horizon}
U^{(m)}_{\varepsilon}(\infty, \vp) \triangleq \inf \left\{ t \ge 0 : \; V_m (\infty , \vP_t) - \varepsilon  \le  H(\vP_t )\right\}
\end{align}
is $\varepsilon$-optimal for the infinite horizon problem (see also \cite[Section 4.1]{dps}).

Note that for large $m$, the function $V_m (\infty, \cdot)$ approximates the function
$V(\infty, \cdot)$, and for large $T$, $V(\infty, \cdot)$ is a good approximation for $V(T,
\cdot)$. However, the stopping rule in
\eqref{def:eps-stopping-time-for-truncated-inifinite-horizon} is not a good substitute for the
optimal time $U_{0}(T, \vp)$ since the former may not be less than $T$ almost surely. Moreover,
since $U^{(m)}_{\varepsilon}(\infty, \vp)$ may be greater than $U_{0}  (T, \vp)$,
Proposition~\ref{prop:U-eps-optimal-rule} is not necessarily true. In particular, the
martingale property \eqref{eq:M-martingale} may fail. Nevertheless, if we apply the rule
$U^{(m)}_{\varepsilon}(\infty, \vp) \wedge T $, we can still control the error for large $T$.
Indeed, in Appendix~\ref{sec:appendix}, we show that 
  \begin{multline}
  \label{explicit-error-for-T-truncated-stopping-time}
V(T,\vp) \le
  \E^{\vp}\left[
  \int_{0 }^{ U^{(m)}_{\varepsilon}(\infty, \vp)  \wedge T } \!\! e^{- \rho t} C(\vP_t ) \,dt +
  e^{-\rho ( U^{(m)}_{\varepsilon}(\infty, \vp)  \wedge T)} H\left(  \vP_{U^{(m)}_{\varepsilon} (\infty, \vp) \wedge T} \right)
  \right] \\
  +  \varepsilon + Err_{\infty}(m) +    Err_{\infty}(0) \cdot Err(T).
    \end{multline}
Hence, if $T$ is large enough (so that $Err_{\infty}(0) \cdot Err(T)$ is small), by taking
$\varepsilon $ in \eqref{def:eps-stopping-time-for-truncated-inifinite-horizon} small for a
large value of $m$, the error associated with applying  $U^{(m)}_{\varepsilon}  \wedge T$ can
be reduced to acceptable levels.

\section{Discrete information costs}
\label{sec:otherCosts}
As the case studies of Section~\ref{sec:intro} 
demonstrate, the objective function in \eqref{def:U} is applicable to a variety of economic
settings. This has allowed us to provide a unified treatment of many disparate models.
Returning to the economic interpretation of the running costs appearing in the first term in
\eqref{def:U},
in a typical setting they represent \emph{information acquisition expenses}, such as
observation expenses, subscription costs to market data and holding outlays.
In such a case, it is natural to model the total cost incurred by decision time $\tau$ as the
sum $\int_0^\tau \e^{-\rho t}c \, dt$ where $c$ is interpreted as nominal running cost and
$\rho$ is the interest rate.

Alternatively, the costs can correspond to \emph{opportunity costs}, e.g.\ if $M$ is the
profitability of a new product then the opportunity costs of not launching the product should
depend on $\{M_t\}_{t \in [0,\tau]}$. This
motivates the consideration of $\int_0^\tau \e^{-\rho t}c_i 1_{ \{ M_t = i \} }\, dt$ where $c_i \in \R$ 
and $\rho$ can again be interpreted as the discount factor.

Finally, observation costs may be discrete and be incurred only when new information arrives.
This, for example, happens if new information corresponds to opportunities lost (e.g.\ deals
signed by competitors), leading to a cost structure of the form $\sum_{j=1}^{N_\tau} e^{-\rho
\sigma_j} K(Y_j)$. Here, $N_\tau$ is the number
of arrivals by time $\tau$, $(\sigma_j, Y_j)$ are the arrival times and marks respectively, and
$K(Y_j)$ is the cost incurred upon an arrival of size $Y_j$ (with $K: \R^d \mapsto \R$ satisfying $\nu_i K^+ \triangleq \int_{\R^d} K^+(y) \nu_i (dy) < \infty $, $\forall i \in E$).
%

In the third case, one deals with the objective function
\begin{align}
\label{def:U-alternative} \hat{U} (T, \vp) \triangleq
 \sup_{ \tau \le T  \, ,\, d \in \Fc^X_{\tau}}
\E^{\vp} \left[
\sum_{j=1}^{N_\tau} e^{-\rho \sigma_j} K(Y_j)+ e^{- \rho \tau}  \sum_{ k=1 }^a 1_{\{  d =k \}}
\Bigl( \sum_{i \in E} \mu_{k,i} \cdot 1_{\{ M_{\tau} =i \}} \Bigr) \right] ,
\end{align}
by solving the equivalent stopping problem
\begin{align*}
\hat{V} (T, \vp) \triangleq
 \sup_{ \tau \le T } 
\E^{\vp} \left[
\sum_{j=1}^{N_\tau} e^{-\rho \sigma_j} K(Y_j)+ e^{- \rho \tau} H \left( \vP_{\tau }  \right)
\right] ,
\end{align*}
as in Proposition~\ref{def:V}. One can verify that the sequential approximation method
of Section~\ref{sec:sequential} holds for the function $\hat{V}$. Namely, if we define the
sequence
\begin{align*}
\hat{V}_m (s, \vp) \triangleq
 \sup_{ \tau \le s  } 
\E^{\vp} \left[
\sum_{j=1}^{m \wedge N_\tau}   e^{-\rho \sigma_j} K(Y_j) + e^{- \rho \tau \wedge \sigma_m } H
\left( \vP_{\tau \wedge \sigma_m}  \right) \right] ,
\qquad m \in \N,
\end{align*}
it can be shown (see (\ref{def:J-0}-\ref{def:S}), Proposition~\ref{prop:V-n-epsilon}) that we
have $\hat{V}_{m+1} (s,\vp) = \hat{J}_0 \hat{V}_{m} (s,\vp) $ where the operator $\hat{J}_0 $
is defined as
\begin{multline*}
\hat{J}_0 w ( s, \vp) = \sup_{t \in [0,s]} \E^{\vp} \left[ e^{- I(t)} \right]
 \cdot e^{- \rho t }  \cdot  H\left(\vx( t, \vp ) \right)   \\
+  \int_{0}^{t} e^{- \rho u}     \sum_{i \in E}  m_i(t,\vp) \cdot \lambda_i \left(\int_{\R^d}
K(y) \nu_i(dy) +  S_i w(s-u, \vx(u, \vp)) \right) du,
\end{multline*}
for a bounded function $w: [0,T] \times D \mapsto \R $.

Clearly $\{V_m\}_{m \ge 0}$ is an increasing sequence. Using the inequality $\E\left[
\sum_{j=1}^{N_T} K^+ (Y_j) \right] \le ( \max_{i \in E} \lambda_i )T \cdot (\max_{i \in E
}\nu_i K^+)$ and the truncation arguments in the proof of Proposition~\ref{prop:uniform-convergence},
one can show that the sequence converges to $\hat{V}$ uniformly with the error bound
\begin{align*}
0 \le  V - V_m \le \left( ( \max_{i \in E} \lambda_i ) T \cdot (\max_{i \in E }\nu_i K^+) + 2
\| H \| \right) \left(   \frac{  \bar{\lambda} \,  T}{  m-1  }  \right)^{1/2}  \left(
\frac{ \bar{\lambda}  }{ 2 \rho + \bar{\lambda} } \right)^{m/2}.
\end{align*}
Arguments in Sections~\ref{sec:sequential} and \ref{sec:solution} can then be replicated to
conclude that
\begin{align*}
 \E^{\vp} \left[
 \sum_{j=1 }^{ N_{ \hat{U}_{\varepsilon}(s,\vp) } } e^{- \rho \sigma_j } K( Y_j)  +
 e^{-\rho\,  \hat{U}_{\varepsilon}(s,\vp) }   H\left(\vP( \hat{U}_{\varepsilon}(s,\vp)  \right) \right] \ge \hat{V} (s, \vp) - \varepsilon, 
\end{align*}
for the stopping time
$ \hat{U}_{\varepsilon } (s,\vp) \triangleq \inf \left\{ t \in [0,s] \, :\,  \hat{V} (s-t,
\vP_t)  - \varepsilon \le H(\vP_t)  \right\} $. Hence, the admissible strategy $ (
\hat{U}_{\varepsilon } (s,\vp), d(\hat{U}_{\varepsilon } (s,\vp)) )$ is an optimal strategy for
the problem in \eqref{def:U-alternative}, as expected.

Furthermore,
other results of
Section~\ref{sec:solution} can be adjusted for this new objective function. Below, we summarize
these results in a remark.
\begin{rem}
Let $\nu_j K \triangleq \int_{\R^d} K(y) \nu_j(dy)$, for $j \in E$.
\begin{enumerate}
\item For a given index $i \in E$, Define $\mathcal{A}^*(i) \triangleq \{ k \in \mathcal{A} : \, \mu_{k,i} = \max_{j \in \mathcal{A} } \mu_{j,i} \}$ as in Lemma \ref{lemm:continue-at-good-states}. If $ - \rho \mu_{k,i } + \lambda_i \cdot \nu_i K+ \sum_{j
\ne i } (\mu_{k,j} - \mu_{k,i}) q_{i,j} > 0$ holds for all $k \in \mathcal{A}^*(i)$, then there exists
some $\hat{\pi}_i^c< 1$ (for all $T>0$) such that it is optimal to continue on the region $\{
(0,T] \times D ; \, \pi_i \ge \hat{\pi}_i^c \}$.

\item Assume $\nu_j K \le 0$ for all $j \in E$, and $ \bar{\mu} \triangleq  \max_{k,i}
\mu_{k,i}
> 0$, and let $I^*$ be as in \eqref{optimal-notation}. For $i \in I^*$, if $\nu_i K < 0$ or $\rho
> 0$ there exists a number $\hat{\pi}_i^s< 1$ (free of $T$) such that it is optimal to stop at
the points $\vp$ for which $\pi_i \ge \hat{\pi}_i^s$. That is: $\Gamma_{T,i} \supseteq \{ [0,T]
\times D ; \, \pi_i \ge \hat{\pi}_i^c \}$ for all $T \ge 0$.

\item In the case where $\nu_j K \le 0$ for all $j \in E$, and $H(\cdot)\ge 0$, the stopping
region is monotone in $\rho $ and $\nu_j K $, for $j \in E$. Namely, if we increase one of these factors in absolute terms (keeping everything else fixed), the stopping region
expands, and the DM is forced to make a decision sooner.

\item For a given $\varepsilon > 0 $, let $m \in \N$ such that $\| \hat{V} (T, \cdot) - \hat{V} (T, \cdot)  \| \le \varepsilon /2 $. Then the stopping time $\hat{U}^{(m)}_{\varepsilon /2 } (s,\vp) \triangleq \inf \left\{ t \in [0,T] \, :\,  \hat{V}_m (T-t,
\vP_t)  - \varepsilon \le H(\vP_t)  \right\} $ gives an $\varepsilon$-optimal strategy.

\item If ``$\rho > 0$'' or ``$K(\cdot) \le 0$ with $\max_{i \in E} \nu_i K(\cdot) < 0$'', then $\hat{V} (T, \cdot) \nearrow \hat{V} (\infty, \cdot)$ uniformly 
as in \eqref{eq:convergence-of-finite-to-infinite} if we redefine
\begin{align*}
Err(T) \triangleq \left\{
\begin{aligned}
&e^{- \rho T} \big( \max_{i \in E} \lambda_i \cdot \max_{i \in E} \nu_i K^+ + 2 \cdot \|  H( \cdot  ) \| \big) &&, \quad \text{\normalfont if $\rho > 0$} \\
 &
\frac{2 \cdot \|  H( \cdot  ) \|}{T} \,
\frac{\big( \min_{k,i}\mu_{k,i} - \max_{k,i} \mu_{k,i} \big) }
{ \min_{i \in E} \lambda_i \cdot \max_{i \in E} \nu_i K}
 &&, \quad \text{\normalfont if $\rho = 0$, $K(\cdot) \le 0$ and $ \max_{i \in E } \nu_i K <  0$.}
\end{aligned}
\right\}
\end{align*}

\end{enumerate}
\end{rem}


\section{Examples}
\label{sec:examples}

Below we provide numerical examples illustrating the use of our sequential approximation
approach developed in Section~\ref{sec:sequential}.
In each example, we approximate the value function by repeatedly (finitely many times) applying the operator $J$ in \eqref{def:J-0} starting with the initial function $H(\cdot)$. We set the number of iterations $m \in \N$ such that the error $\| V_m(\cdot) -V(\cdot)  \|$ is negligible
(see \eqref{eq:uniform-bound}).

\subsection{Insurance launch.}\label{sec:insurance}
Our first example illustrates profit maximization with information cost, which is the first example in Section \ref{sec:catalogue}. 
Here, $M_t$ represents the state of the economy with three
major states $E = \{ 1,2,3\} \equiv \{Boom, Growth, Recession\}$, and with the generator
$$ Q = \begin{pmatrix} -4 & 3 & 1 \\ 2 & -4 & 2 \\ 0 & 3 & -3 \end{pmatrix}.$$
Let $ \vec{\lambda} =[ \lambda_1, \lambda_2 , \lambda_3 ] = [1, 2, 5]$ and
$\vec{\nu} =[ \nu_1, \nu_2 , \nu_3 ] = [ Gamma(3, 2) , Gamma(4, 2) , Gamma(5, 2)] $. 
Conditional on the state of $M$ being $i \in E$,
the frequency of claims is $\lambda_i$ and their common distribution is $\nu_i$.
Here, we consider the objective function in \eqref{def:R1}
with $\vec{\mu}  \equiv [\mu_{B}, \mu_{G}, \mu_{R}] = [6,1,-3]$, $\rho = 0.1$ and $c=
- 0.3$. As before, $d=1$ represents the decision to launch the new policy; $d=0$ represents the
decision to abandon, and does not involve any cashflows. The horizon is taken to be $T=0.8$ (whose unit is to be consistent with that of $\lambda_i$'s; e.g., if $\lambda_i$ is in ``customers per month'', T is in months).

\begin{figure}
\includegraphics[width=0.95\textwidth]{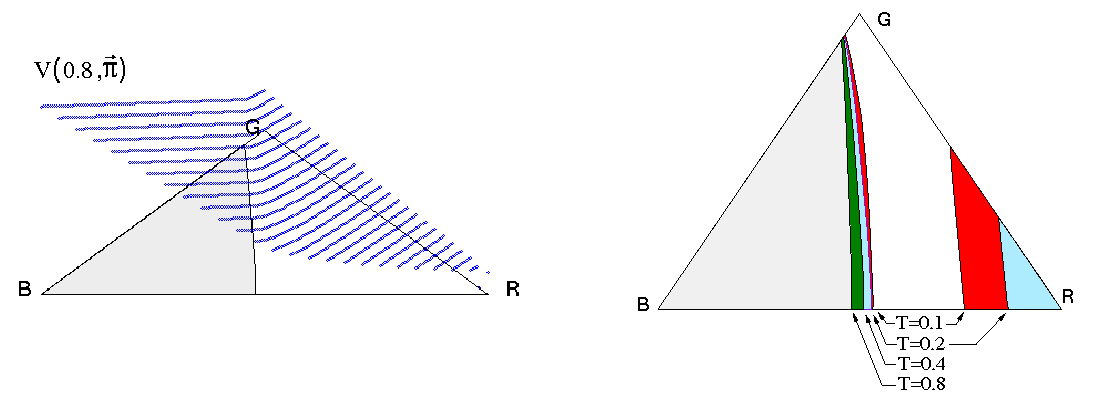}
\caption{\emph{\small{Value function and stopping regions of the insurance launch example of Section
\ref{sec:insurance}. The left panel displays the value function $V(T,\vp)$, for $\vp \in D$ and
$T=0.8$. At $T=0.8 $, if the conditional likelihood process $\vP$ is in the shaded region, the DM stops and selects action $d=1$. Otherwise, she continues observing until the first time $V(T - t, \vP_t) = H(\vP_t)$. 
The right panel shows the dependence of the stopping regions on horizon $T$.}} \label{fig:insurance}}
\end{figure}

For this example, we discretized $D=\{ \vp \in \R^3_+: \pi_B  + \pi_G + \pi_R = 1 \}$ using $100$ grid points in each dimension and computed $V_{m}$ such that $\| V_m - V_{m-1}\| \le 10^{-4}$. The triangular regions in Figure \ref{fig:insurance} show the region $D$. The corners $\{ B,G,R \}$ corresponds to points where the states $\{ Boom, Growth, Recession \}$ have posterior probabilities equal to 1 respectively. The left panel of Figure \ref{fig:insurance} shows the value function $V(0.8,\vp)$ and the shaded region is $\{ \vp \in D: \, V( 0.8,\vp  ) = H(\vp)  \}$. 
Recall that it is optimal to stop as soon as $V(T-t, \vP_t) = H(\vP_t)$ and the corresponding
stopping region is time-dependent. The right panel of Figure \ref{fig:insurance} illustrates
this point by varying the problem horizon $T$. As expected from Remark
\ref{rem:stoppingHorizon}, when $T$ decreases, stopping regions expand. In particular, we see
that with very little time left ($T=0.1$ and $T=0.2$), it is optimal to stop whenever $\pi_B$
(where action $d=1$ is chosen) \emph{or} $\pi_R$ is high (where quitting $d=0$ is optimal). For longer horizons, the DM can afford to wait for
favorable circumstances and release the product then. That is, stopping and selecting $d=0$ is never optimal when time-to-maturity is not small. 
Also note that the terminal reward associated with $d=1$ is higher than that of $d=0$ around the corner $G$. Moreover, with the notation in Lemma~\ref{lemm:continue-at-good-states} we have $r_G = c_G - \rho \, \mu_{G} + (\mu_{B} - \mu_{G}) \,  q_{G,B} + (\mu_{R} - \mu_{G}) \, q_{G,R} = 1.6 > 0$. Then by Lemma~\ref{lemm:continue-at-good-states}, it is never optimal to stop around the corner $G$ (unless $T=0$) as shown the in right panel of Figure \ref{fig:insurance}. 

\subsection{Bayesian regime detection.}\label{sec:hypothesis}
Recall the hypothesis testing problem in \eqref{def:R3}.
Let $V(\infty, \vp)$ denote the value function of this \emph{minimization} problem on
infinite-horizon. With the notation in \eqref{def:stop-cont-regions}, it is shown
in \cite{dps} that it is optimal to stop the first time the conditional probability process
$\vP$ enters the region $ \cup_{k \in E} \Gamma_{\infty, k}$ where
$\Gamma_{\infty, k} \triangleq \{ \vp \in D: \, V(\infty, \vp) = H_k(\vp) \} $
in terms of the functions $H_k (\vp) = \sum_{i \in E} \mu_{k,i} \pi_i$. 
Each $ \Gamma_{\infty, k} $ is a convex region with non-empty interior around $k$'th corner of the simplex $D$.
Namely, an observer stops whenever the conditional likelihood of
one of the hypotheses is sufficiently high. This structure also extends to the finite-horizon
problem. Since $V(\infty, \vp) \le V(T , \vp) $, we have $\Gamma_{\infty,k} \subseteq
\Gamma_{T,k} $, for $k \in E$ and $T < \infty$. In plain words, regardless of the remaining
time to maturity, the observer selects immediately one of the hypotheses when the conditional likelihoods process $\vP$ is around the corners of $D$ (i.e., if there is sufficient
posterior statistical evidence).


In Figure \ref{fig:hypothesis}, we illustrate the time-dependence of the solution structure
using a simple example with two hypotheses $H_1: \Lambda= \lambda_1$ and $H_2: \Lambda= \lambda_2$  on the arrival rate only. The problem in infinite horizon where there are two hypotheses on the arrival rate was solved for the first time by
\cite{PeskirShiryaev} (with $\lambda_2 > \lambda_1$ without loss of generality). The authors showed 
that the immediate stopping is optimal if and only if $ \mu_{2,1} \mu_{1,2} (\lambda_2 - \lambda_1 ) \le \mu_{2,1} + \mu_{1,2}$ (see \cite[Theorem 2.1]{PeskirShiryaev}). Hence the inequality $ \mu_{2,1} \mu_{1,2} (\lambda_2 - \lambda_1 ) > \mu_{2,1} + \mu_{1,2}$ has to be satisfied in any finite-horizon problem with non-trivial solution. 

In Figure \ref{fig:hypothesis}, under $H_1$ the arrival rate is
$\lambda_1 = 1$ while under $H_2$ it is $ \lambda_2 = 5$.
For the Bayes risk given in \eqref{def:R3}, we select $\mu_{1,2} = \mu_{2,1} = 2$ for the penalty costs for selecting the wrong
hypothesis.
This numerical example corresponds to the one considered in \cite[Figures 2-3]{PeskirShiryaev}.
The left panel of Figure \ref{fig:hypothesis} shows the value functions $V(T,\cdot)$ 
with horizons $T=0.1, T=0.2, T=0.4$ and $T=2$ respectively, and the terminal reward $H(\vp)=  \min \{ \mu_{1,2} \pi_2 \, ; \,  \mu_{2,1} (1-\pi_2) \}$ on the state space of $\pi_2 \in [0,1]$. We see that as more time is available to make the decision, the value function decreases, as expected.
The right panel of Figure \ref{fig:hypothesis} shows that the continuation region widens as time to maturity increases. We also observe that the boundary curves approaches the solution structure of problem with infinite
horizon. \cite{PeskirShiryaev} obtain a continuation region of $[0.22, 0.70]$, very close to
ours of $[0.230, 0.705]$ for $T>1$.

\begin{figure}[ht]
\begin{tabular*}{\textwidth}{lr}
\begin{minipage}{0.5\textwidth}
\center{\includegraphics[width=0.6\textwidth]{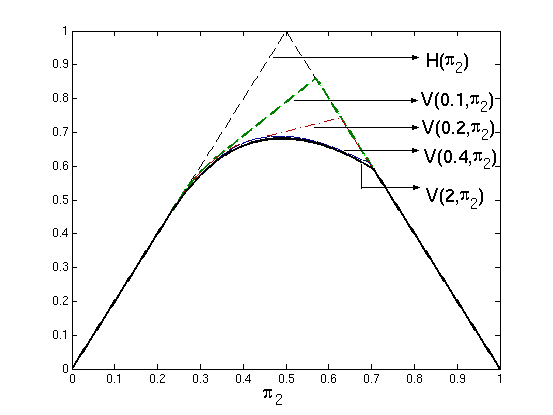}}

\end{minipage} &
\begin{minipage}{0.55\textwidth}
\center{\includegraphics[width=0.6\textwidth]{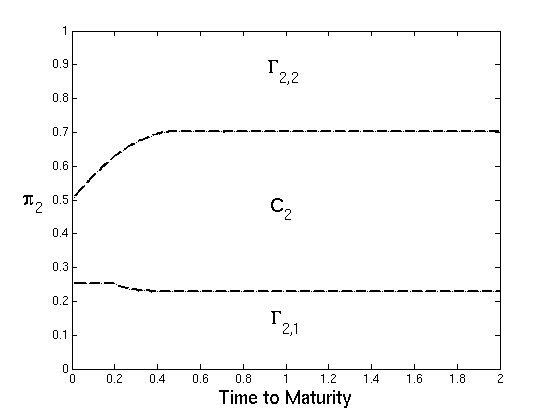}}

\end{minipage}
\end{tabular*}\caption[Bayesian Regime Detection]{\small{\emph{Bayesian regime detection example
of Section \ref{sec:hypothesis}. The left panel shows the value functions $V(T,\vp)$ for various time
horizons $T$. The right panel shows the stopping regions  $\Gamma_{T,k}$ (namely $\Gamma_{T,0}$
below the lower curve and $\Gamma_{T,1}$ above the higher curve)
for $T=2$.
}
\label{fig:hypothesis}}}
\end{figure}


Let us define the lower boundary curve
$T \mapsto b_1(T) \triangleq \sup \{ \pi_2 \in [0,1] :\, V(T, \vp ) = 2 \pi_2 \} $. Clearly $b_1(0)= 0.5$. In the right panel, we observe that
%
the lower boundary curve $b_1(\cdot)$ has a
discontinuity at $T=0$ (jumping from $\pi_2 = 0.5$ to approximately $ \pi_2 = 0.25$) and then
remaining constant until about $T=0.2$.
Note that the point $\vp = (\pi_1 , \pi_2) = (0.5, 0.5)$ is the global maximum of the
terminal cost function $H(\vp)$. Starting at the point $( 0.5 + \varepsilon , 0.5 - \varepsilon)$,
for $\varepsilon \ge 0$ and small, as long as there is no jump, the conditional likelihood
process $\vP$ drifts  (quickly) toward the point $\vp= (\pi_1 , \pi_2) =(1,0)$ and away from this maximum.
For very small values of $T$, the probability of observing a jump is low and thus it is optimal
to continue. Therefore, the lower curve in Figure~\ref{fig:hypothesis} is discontinuous around $T=0$.  
The rate of drift of the process $\vP$ to the point $(1,0)$ decreases as $\pi_2$ decreases and approaches the point $(1,0)$ (see \eqref{eq:vx-dyn}). As
a result, at points $\vp$ where $\pi_2$ is small, the effect of waiting cost becomes
dominant and it is optimal to stop even if $T$ is small. 

The following remark summarizes our discussion on this problem and states that the behavior of the lower boundary curve around $T=0$ holds for any set of parameters $\lambda_2 > \lambda_1$, $\mu_{1,2}$, $\mu_{2,1}$. Its proof can be found in the Appendix.

\begin{rem}
\label{rem:two-hypothesis}
Consider the hypothesis-testing problem in \eqref{def:R3} with two simple hypotheses on the arrival rate: $H_1: \Lambda= \lambda_1$ and $H_2: \Lambda= \lambda_2$ (with $\lambda_2 > \lambda_1$).
The continuation region $\mathcal{C}_T$ is non-empty (for $T> 0$) if and only if $ \mu_{2,1} \mu_{1,2} (\lambda_2 - \lambda_1 ) > \mu_{2,1} + \mu_{1,2}$.
The boundary curve $T \mapsto b_1(T) \triangleq \sup \{ \pi_2 \in [0,1] :\, V(T, \vp ) = \mu_{1,2}\,  \pi_2 \}$ is discontinuous at $ T=0 $, and there is an interval around $ T=0 $  at which $ b_1(\cdot)$ is constant.
\end{rem}

\subsection{Optimal replacement of a system.}\label{sec:jensen}

Here we consider the reliability problem in \eqref{def:objective-function-in-reliability}. In this problem, the unobservable  Markov process $M$ represents the current productivity of a given machine, and the $n$'th state (defective state) of $M$ is absorbing. The objective is to find the best time to replace the equipment in order to maximize the net lifetime earnings. The problem is studied by \cite{JensenHsu} under certain assumptions on $(q_{i,j})_{i,j \in E}$, $\vec{\lambda}$, $\vec{\mu}$ and $\vec{c}$ such that the infinitesimal look-ahead (ILA) rule $\tau^{ILA} := \inf\{ t\ge 0 \colon \sum_i r_i \Pi_t^{(i)} < 0 \}$ is optimal where $r_i \triangleq c_i + \sum_{j \ne i} (\mu_{j}- \mu_{i}) q_{i,j}$ (cf. Lemma \ref{lemm:continue-at-good-states}). More precisely these assumptions are (i) $q_i \ne 0$ for $i = 1, \ldots , n-1$, with $q_n =0$ (ii) $r_1 \ge r_2 \ge \ldots \ge r_n = c_n $, with $c_n < 0$ (iii) $0 < \lambda_1 \le \ldots \le \lambda_n  $, (iv) $q_{in} > \lambda_n!
  - \lambda_i $ for $i = 1, \ldots , n -1 $.

It follows as a corollary to \cite[Theorem 3.1]{Jensen89} that $\tau^{ILA} \wedge T$ is an optimal stopping rule for the finite horizon problem under these assumptions. Therefore, the region $\{  \vp \in D: \, V(T,\vp) = H(\vp) \}$ does not depend on $T$. This occurs because 
the instantaneous revenue rates $r_i$'s completely summarize the relative worth of different machine states, and the sum $\sum_{i \in E} r_i \Pi_t^{(i)}$ 
is monotonically non-increasing over time $\P^{\vp}$-almost surely for all $\vp \in D$ (see \cite[Theorem 2]{JensenHsu}).
Thus, $T$ only plays a role insofar as allowing the DM to collect profits
before the machine deteriorates.

\begin{figure}
\includegraphics[width=1\textwidth]{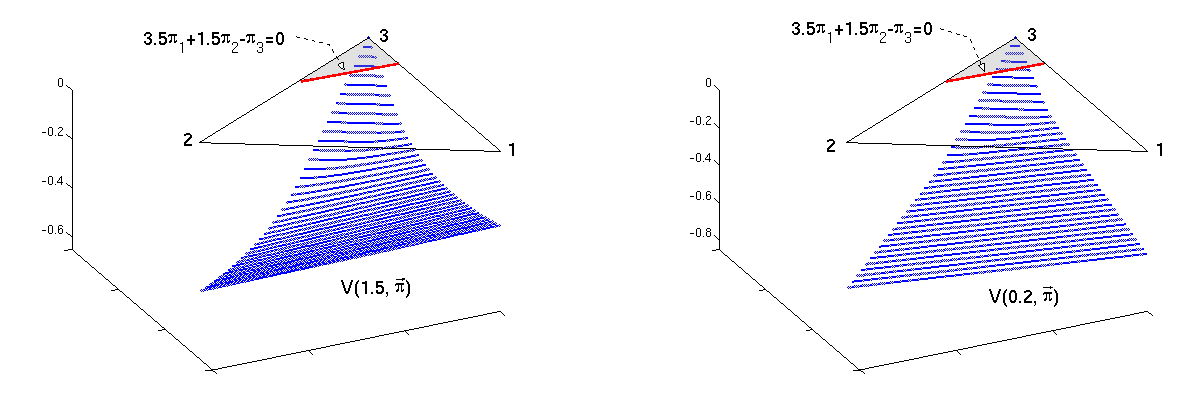}
\caption[Reliability Example]{\small{\emph{Value function $V(T,\vp)$ of the reliability example
of Section~\ref{sec:jensen}. The shaded regions represent the computed stopping regions $\{  \vp \in D: \, V(T,\vp) = H(\vp) \}$.
Left panel shows $T=1.5$, right panel shows $T=0.2$. The shaded regions are the same in both panels. Note however the different $z$-scales. The panels also show the line $3.5 \pi_1 + 1.5 \pi_2- \pi_3 = 0 $, which is the stopping boundary of the ILA rule in \eqref{eq:jensenTau}.} \label{fig:jensen}}}
\end{figure}

We illustrate this degeneracy in Figure \ref{fig:jensen}. In this example, we select the parameters to fit the framework of \cite{JensenHsu}. We have a machine that moves through three regimes $E = \{1, 2, 3\} \equiv \{Good, Average, Poor\}$ with transition matrix
$$ Q = \begin{pmatrix} -4 & 1.5 & 2.5 \\ 0 & -1.5 & 1.5 \\ 0 & 0 & 0 \end{pmatrix}.$$
At different states, the running profit from operating the machine is $\vec{c} = [1, 0,
-1]$, 
and shutting down the machine for maintenance involves a cost of
$\vec{\mu} = [-1, -1, 0]$. Thus, it is costly to shutdown a machine until it is in the $Poor$
state. In each state, the breakdowns occur according to independent Poisson processes with
intensities $\vec{\lambda} = [2, 3, 4]$. In this setting we have $\vec{r} = \{ r_1, r_2, r_3 \} = \{3.5, 1.5, -1\}$ so that
\begin{align}\label{eq:jensenTau}
\tau^{ILA} = \inf\{ t \ge 0 \colon 3.5 \Pi_t^{(1)} + 1.5 \Pi_t^{(2)} - \Pi_t^{(3)}  < 0 \}.\end{align}
The left and right panels of Figure \ref{fig:jensen} show the functions $V(T,\vp)$ and the regions $\{ \vp \in D:\, (T,\vp) \in \Gamma_T \}$ for $T= 1.5$ and $T= 0.2$ respectively.
We see that $V(0.2, \vp ) < V(1.5, \vp ) $ but the regions $\{  \vp \in D: \, V(T,\vp) = H(\vp) \}$ for $T=0.2$ and $T=1.5$ completely matches the region $\{ \vp \in D : \, 3.5 \pi_1 + 1.5 \pi_2- \pi_3 \le 0 \} $, 
at least modulo the $D$-discretization necessary for numerical implementation.

\begin{figure}
\includegraphics[width=1\textwidth]{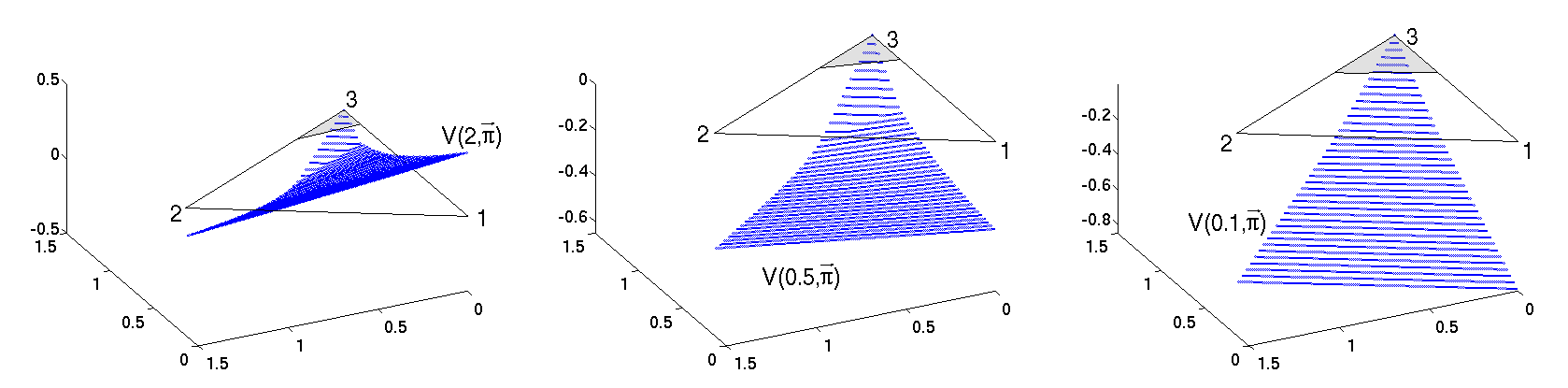}
\caption[Technology Adoption
Example]{\small{\emph{The second example for the reliability problem of Section \ref{sec:jensen} with the new parameters in \eqref{matrix:second-problem}. In the left panel $T=2$, in the middle $T=0.5$, and in the right panel $T=0.1$. In each picture, the function $V( T, \vp) $ is plotted on $D$. The shaded regions are the sets $\{  \vp \in D: \, V(T,\vp) = H(\vp) \}$.} \label{fig:techAdopt2}}}
\end{figure}

This degenerate structure would disappear if
one removes some of the assumptions in \cite{JensenHsu}, for example the special form of generator $Q$ and/or the arrival rates $\vec{\lambda}$ above. We give an example in Figure \ref{fig:techAdopt2} where 
\begin{align}
\label{matrix:second-problem}
Q = \begin{pmatrix} -1 & 0.5 & 0.5 \\ 0 & -0.5 & 0.5 \\ 0 & 0 & 0 \end{pmatrix}
\qquad \text{and} \qquad \vec{ \lambda } = [\lambda_1 , \lambda_2, \lambda_3]=  [1 ,4 , 7].
\end{align}
We keep other parameters the same as in the previous example. In this example, the instantaneous net gain $\sum_{i \in E} r_i \Pi_t^{(i)} = 1.5 \Pi_t^{(1)} + 0.5 \Pi_t^{(2)} - \Pi_t^{(3)}$ is not monotonically non-increasing $\P^{\vp}$-almost surely for all $\vp \in D$ anymore. For example, using \eqref{eq:vx-dyn} it can be shown that $d( 1.5 x_1(t,\vp) + 0.5 x_2(t,\vp) - x_3(t,\vp) ) / dt |_{t =0 } > 0 $ at the point $\vp = (\pi_1 , \pi_2, \pi_3) = (0.45, 0.45, 0.1)$. Figure \ref{fig:techAdopt2} shows that the structure of the stopping region is indeed time dependent. The stopping region expands as time to maturity decreases. Moreover, in this problem the transition rates of $M$ are lower. Therefore, the DM can obtain positive net gain when $M$ starts from the state $\{1\}$ and there is enough time to operate the system. Indeed, the first panel in Figure \ref{fig:techAdopt2} shows that for $T=2$ the value function is positive around the corner $\{ 1 \}$.

\subsection{Technology adoption example.}\label{sec:techAdopt}
To illustrate an example for the discrete cost structure of Section~\ref{sec:otherCosts}, we consider an IT company, which is planning to add a new technological
feature to its products. The benefit of the technology is unknown, but will improve over time
as customer awareness grows and production is streamlined. The company wishes to adopt the
technology at the optimal time that best resolves the tension between early adoption (with high
production costs) and late adoption (with opportunity costs due to late market entry). A
similar setting has been studied recently by \cite{UluSmith} and goes all the way back to
\cite{McCardle85}.

Suppose that after $T$ years the technology becomes obsolete and let $M = \{ M_t \}_{ t \ge 0}$ represent the profitability/value of the technology with state space $E = \{1, 2, 3\} \equiv \{ Low,
Med, High \}$. The generator of $M$ is $$ Q = \begin{pmatrix} -2 & 2 & 0 \\ 0 & -2 & 2 \\
0 & 0  & 0 \end{pmatrix}.$$ Thus, $M$ sequentially moves through the phases $Low \rightarrow
Med \rightarrow High$. The firm may incorporate the feature at the minimal level (action
$d=1$), at the maximum level ($d=2$), or not at all $(d=0)$. The profit functions are given by
$$ \mu_{k,i} = \begin{bmatrix} -1 & 3 & 4\\ -4 & 2 & 10 \end{bmatrix}, \qquad \qquad k \in \{ 1,2\}, \quad i \in
E,$$ with zero profit when  $d=0$.

The observation process $X$ corresponds to competitor contract sales and is represented by a
compound Poisson process with mark space $Y_k \in B = \{1, 2\} \equiv \{Large, Small \}$. The
$M$-modulated intensity of $X$ is $\vec{\lambda} = [ \lambda_1, \lambda_2 , \lambda_3] = [3, 5, 3]$ and the mark distributions on $B$
are $[0.2, 0.8], [0.5, 0.5], [0.8, 0.2]$ respectively. Contracts signed by competitors are
opportunity costs and the objective function is of the type \eqref{def:U-alternative} (with
zero discounting $\rho=0$):
$$ V(T,\vp) = \sup_{\tau \le T, d \in \mathcal{F}^X_\tau} \E^{\vp} \left[  \sum_{j=1}^{N_\tau} K(Y_j)+ \sum_{ k=1 }^2 1_{\{  d =k \}}
\Bigl( \sum_{i \in E} \mu_{k,i} \cdot 1_{\{ M_{\tau} =i \}} \Bigr) \right],
$$ where $T=1, K(1) = -3, K(2) = -1$.

\begin{figure}
\includegraphics[width=0.95\textwidth]{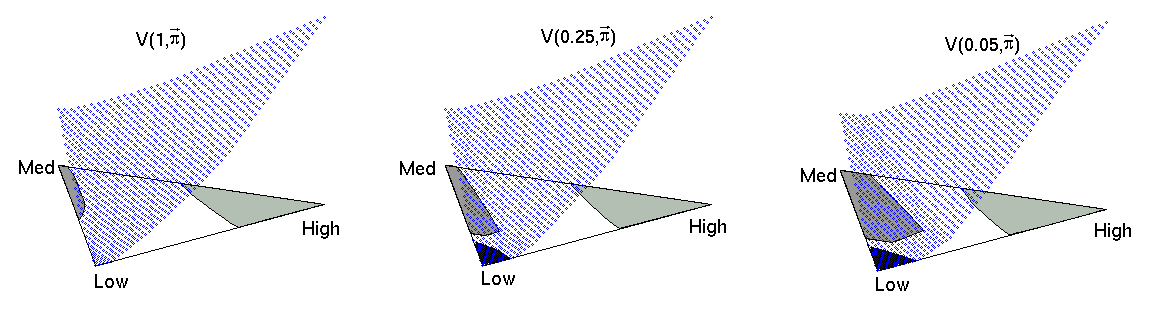}
\caption[Technology Adoption
Example]{\small{\emph{Value function $V(T,\vp)$ of the technology adoption example \ref{sec:techAdopt}
plotted together with the stopping regions (shaded: $d=2$ lighter color, $d=1$ darker, $d=0$ black).
Left panel: $T=1$, middle panel: $T=0.25$, right panel: $T=0.05$.} \label{fig:techAdopt}}}
\end{figure}

The triangular regions in Figure \ref{fig:techAdopt} are the state space $D = \{ \vp \in \R_+^3 :\, \pi_{Low} + \pi_{Med} + \pi_{High} =1\} $. In the panels, we show how the stopping regions expand as the time to maturity approaches (from left to right) as indicated in Remark
\ref{rem:stoppingHorizon}. When $T=1$, (left panel) we see that if the DM stops, she either selects $d=1$, or $d=2$ if there is sufficient evidence that $M$ is at $Med$ or $High$ respectively. For $T=1$, the decision $d=0$ is never considered since the DM can wait for $M$ to move to better states. Note that, if $T$ is small (middle and right panels) and if $M$ seems to be at $Low$ state, the DM does not have enough time to wait for $M$ to jump to a new state. By stopping immediately, she at least gets rid of the opportunity costs.

Around the $Med$ corner there is high competitor activity ($\lambda_2 =5$), and this 
increases in the opportunity costs (given by $K(\cdot)$). As a result the DM always stops, she does not wait for $M$ to move to $High$ state. Since the expected reward of minimal commitment is higher than that of maximum commitment around this corner, she selects $d=1$. The DM selects $d=2$ only if there is sufficient statistical evidence that the technology has reached its $High$ benefit.

\subsection{A targeting problem.}
\label{sec:lobby}

As a final illustration we present a \emph{targeting} example, where the objective is to
maximize the probability of $M$ belonging to some favorable set $B \subseteq E$. 
\begin{figure}
\includegraphics[width=0.95\textwidth]{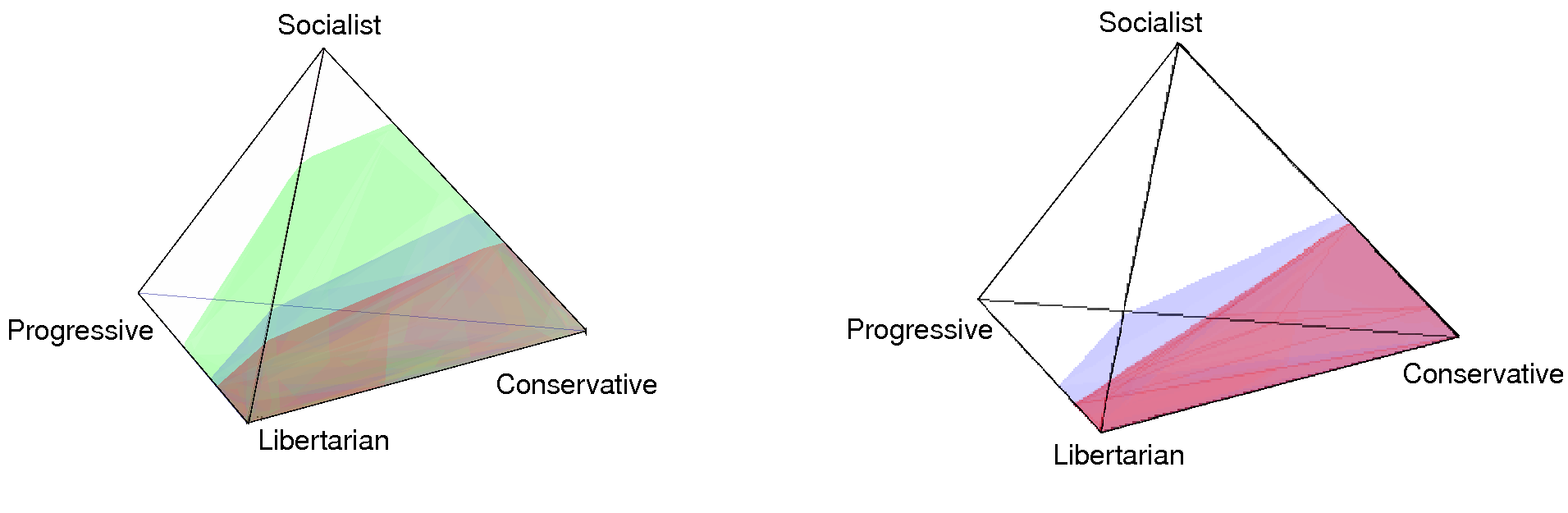}
\caption{\small{\emph{Stopping regions $\{\vp \in D :\, V(T,\vp)= H(\vp) \} \subset \Gamma_T$ of the targeting example of Section \ref{sec:lobby} for $T=2$. On the left
panel we illustrate the effect of the waiting cost $c$, with the shaded polyhedra representing
stopping regions for $c=-0.1, c=-0.2, c=-0.4$ respectively.
On the right panel we take $c= -0.2$, and we display the effect of changing the arrival rate from $\lambda_{L}=4$ (blue/lighter
stopping region) to $\lambda_{L} = 10$ (red/darker stopping region). } \label{fig:lobbyLam}}}
\end{figure}

An industrial conglomerate is seeking a business-favorable government legislation and employs a
lobbyist for that purpose. The lobbyist maintains government contacts and will try to time her
action to maximize the probability of the law passing. Suppose the passage of legislation
depends on the current political climate $M_t$ in the country that can be one of the following
four states: $E = \{1,2,3,4\} \equiv \{ Libertarian,  Conservative, Progressive, Socialist \}$.
For simplicity we assume that the law will pass if the climate is in $B = \{Libertarian,
Progressive\}$ and fail otherwise. Suppose that the generator of $M$ is
$$ Q = \begin{pmatrix} -1 & 0.5 & 0.5 & 0 \\ 0.5 & -1.5 & 0.5 & 0.5 \\ 1 & 0.5 & -2 & 0.5 \\
0 & 1 & 0.5 & -1.5 \end{pmatrix}.$$ We postulate that the objective function is $\E^{\vp}[ c
\tau] + \P^{\vp} (M_{\tau} \in B)$, where the constant $c \le 0$ denotes the running cost of
maintaining the lobby. Information is obtained via a simple Poisson process counting the
passing of other business-friendly legislation, with $M$-modulated intensities $\vec{\lambda}
\equiv [\lambda_{L}, \lambda_{C}, \lambda_{P},\lambda_{S}] = [4, 3, 2, 1]$. The time horizon is
$T=2$ years.

Figure \ref{fig:lobbyLam} shows the stopping regions of this example inside the tetrahedron $D$.
The left panel shows the effect of changing the waiting cost $c$; as $c$ increases in absolute value, the DM is
more ``impatient'' and will stop sooner, compare with Remark \ref{rem:VincreasesInRho}. The
right panel of Figure \ref{fig:lobbyLam} shows the effect of increasing $\lambda_{L}$ to
$\lambda_{L} = 10$. As intuition suggests, this shrinks the continuation region because the
data is now more informative. We see that the continuation region $\mathcal{C}_T $ expands
especially around the 'Libertarian' corner, as the DM can now be fairly confident in detecting
that regime (as it has a much higher arrival intensity). 

\appendix

\renewcommand{\thesection}{A1}
\refstepcounter{section}
\makeatletter
\renewcommand{\theequation}{\thesection.\@arabic\c@equation}
\makeatother

\section{Sample Paths of $\vP$}
\label{sec:sample-paths-of-pi}

In this appendix, we prove Lemma~\eqref{lemm:vP-explicit}, and we derive the characterization of the sample paths given in (\ref{eq:rel-pi-x}-\ref{eq:x-i}).

 \begin{proof}[\textbf{Proof of Lemma~\ref{lemm:vP-explicit}.}]
Let $\Xi$ be a set of the form
\begin{align*}
{\Xi} = \{  N_{t_1} = m_1, \ldots, N_{t_k} = m_k ; (Y_1,\ldots, Y_{m_k}) \in B  \}
\end{align*}
where $0 = t_0 \le t_1 \le \ldots \le t_k = t $ with $0 \le m_1 \le \ldots \le m_k$ for $k \in \N$, and $B$ is a Borel set in $\mathcal{B}(\R^{m_k})$. Since $t_j$ and $m_j$'s are arbitrary, to prove (\ref{vP-explicit}) it is then sufficient to establish  
\begin{align*}
\E^{\vp} \left[ 1_{\Xi} \cdot \P^{\vp} \{ M_t =i | \Fc^X_t \} \right] =
\E^{\vp} \left[ 1_{\Xi} \cdot \frac{L_i^{\vp} ( t, N_t : (\sigma_k, Y_k ), i \le N_t) }{ L^{\vp} ( t, N_t : (\sigma_k, Y_k ), i \le N_t)} \right].
\end{align*}
Conditioning on the path of $M$, the left-hand side (LHS) above equals
\begin{multline*}
LHS  = \E^{\vp} \left[ 1_{\{M_t = i\}} \P^{\vp} \left\{ N_{t_1} = m_1, \ldots, N_{t_k} = m_k ; (Y_1, \ldots, Y_{m_k}) \in B \, \Big| M_s ; \, s \le t \right\}  \right] \\
 = \E^{\vp} \left[ 1_{\{M_t = i\}}  \int_{B \times \Upsilon (t_1, \dotsc, t_k)} \P^{\vp} \left\{ \sigma_1 \in ds_1, \ldots, \sigma_{m_k} \in s_{m_k} ; Y_1 \in dy_1, \ldots, dY_{m_k} \in dy_{m_k} \, \Big| M_s ; \, s \le t \right\}  \right]
\end{multline*}
where
\begin{align*}
\Upsilon (t_1, \dotsc, t_k) = \left\{ s_1, \ldots, s_{m_k} \in \R_+^{m_k}: \; s_1\le \ldots \le
s_{m_k} \le t \; \; \text{and}\; \; s_{m_j} \le t_j < s_{m_j+1} \; \text{for}\;  j=1, \ldots k
\right\}.
\end{align*}
Then, by Fubini's theorem we have
\begin{align*}
LHS & =\E^{\vp} \left[ 1_{\{M_t = i\}}\int_{B \times \Upsilon (t_1, \dotsc, t_k)}  e^{-I(t)}  \prod_{l=1}^{m_k} \sum_{j \in E }  1_{\{M_{s_l} = i\}} \lambda_j f_j(y_l) \,  ds_l \, \nu(dy_l)  \right] \\
& =\int_{B \times \Upsilon (t_1, \dotsc, t_k)}  L_i^{\vp} ( t, m_k : (s_j, y_j ), j \le m_k) \prod_{l=1}^{m_k}ds_l \cdot \nu(dy_l)\\
& =\int_{B \times \Upsilon (t_1, \dotsc, t_k)} \frac{L_i^{\vp} ( t, m_k : (s_j, y_j ), j \le m_k) }{ L^{\vp} ( t, m_k : (\sigma_j, Y_j ), j \le m_k) } \cdot  L^{\vp} ( t, m_k : (s_j, y_j ), j \le m_k) \prod_{l=1}^{m_k}ds_l \cdot \nu(dy_l)
\end{align*}
Another application of Fubini's theorem gives LHS = 
\begin{align*}
&\E^{\vp} \left[ \sum_{i \in E }1_{\{M_t = i\}}
\int_{B \times \Upsilon (t_1, \dotsc, t_k)}
\frac{L_i^{\vp} ( t, m_k : (s_j, y_j ), j \le m_k) }{ L^{\vp} ( t, m_k : (\sigma_j, y_j ), j
\le m_k) } \cdot     e^{-I(t)}  \prod_{l=1}^{m_k} \sum_{j \in E }  1_{\{M_{s_l} = i\}}
\lambda_j f_j(y_l)
\cdot \prod_{l=1}^{m_k}ds_l \cdot \nu(dy_l) \right]\\
&=\E^{\vp} \left[ \sum_{i \in E }1_{\{M_t = i\}}  \E^{\vp} \left[ 1_{\{  N_{t_1} = m_1, \ldots,
N_{t_k} = m_k ; (Y_1,\ldots, Y_{m_k}) \in B  \}} \cdot \frac{L_i^{\vp} ( t, N_t : (\sigma_j,
Y_j ), j \le N_t) }{ L^{\vp} ( t, N_t : (\sigma_j, Y_j ), j \le N_t) } \;
\Bigg| M_s ; \, s \le t \right] \right] \\
&=\E^{\vp} \left[   \E^{\vp} \left[ 1_{\{  N_{t_1} = m_1, \ldots, N_{t_k} = m_k ; (Y_1,\ldots,
Y_{m_k}) \in B  \}} \cdot \frac{L_i^{\vp} ( t, N_t : (\sigma_j, Y_j ), j \le N_t) }{ L^{\vp} (
t, N_t : (\sigma_j, Y_j ), j \le N_t) }
\; \Bigg| M_s ; \, s \le t \right] \right] .
\\
&=\E^{\vp} \left[   1_{\Xi} \cdot \frac{L_i^{\vp} ( t, N_t : (\sigma_j, Y_j ), j \le N_t) }{
L^{\vp} ( t, N_t : (\sigma_j, Y_j ), j \le N_t) }
 \right],
\end{align*}
and this concludes the proof.
\end{proof}

\begin{proof}[\textbf{Proof of Remark~\ref{rem:sample-paths-of-pi}.}]
In order to establish (\ref{eq:rel-pi-x}-\ref{eq:x-i}), let $\E_j[\cdot]$ denote the expectation operator $\E^{\vp}[\cdot \, | M_0 =j] $, and let $t_m \le t \le t+ u < t_{m+1}$. Here $t_m $ and $t_{m+1}$ can be considered as the sample realization $\sigma_m (\omega) $ and $\sigma_{m+1} (\omega)$ of the m'th and m+1'st arrival times respectively. Using the definition of $L_i^{\vp}$ in \eqref{def:L} we have
$L_i^{\vp} ( t+u  , m : (t_k, y_k ), k \le m)
= \sum_{j \in E} \pi_j  \cdot
\E_j \left[
1_{\{ M_{t+u} =i\}} \cdot e^{ - I(t+u) } \cdot  \prod_{k=1}^m \ell(t_k, y_k)
\right]$
\begin{align}
\label{eq:derivation-of-vx}
\begin{aligned}
=& \sum_{j \in E} \pi_j  \cdot
\E_j \left[
\E_j \Biggl[
1_{\{ M_{t+u} =i\}} \cdot e^{ - I(t+u) } \cdot  \prod_{k=1}^m \ell(t_k, y_k) \,
\Bigg| M_s : s\le t \Biggr]   \right] \\
=& \sum_{j \in E} \pi_j  \cdot
\E_j \left[ e^{ - I(t) } \left( \prod_{k=1}^m \ell(t_k, y_k) \right)
\E_j \left[
1_{\{ M_{t+u} = i \}} \cdot e^{ - (I(t+u)-I(t) )}  \Bigg| M_s : s\le t \right]   \right] .
\end{aligned}
\end{align}
Using the Markov property of $M$, the last expression in \eqref{eq:derivation-of-vx} can be written as
\begin{multline*}
\begin{aligned}
 =& \sum_{j \in E} \pi_j
\cdot \E_j \left[ e^{ - I(t) } \left( \prod_{k=1}^m \ell(t_k, y_k) \right)
\cdot
\sum_{l \in E}  1_{\{ M_{t} =l\}} \cdot \E_l \left[
1_{\{ M_{u} =i\}} \cdot e^{ - I(u)}   \right]   \right] \\
=& \sum_{l \in E}  \E_l \left[
1_{\{ M_{u} =i\}}  e^{ - I(u)}   \right] \cdot
\E^{\vp} \left[ 1_{\{ M_{t} =l\}}  \cdot e^{ - I(t) } \prod_{k=1}^m \ell(t_k, y_k)
 \right]
\\
&
= \sum_{l \in E}  \E_l \left[
1_{\{ M_{u} =i\}}  e^{ - I(u)}   \right] \cdot L_l^{\vp} ( t  , m : (\sigma_k, y_k ), k \le m).
\end{aligned}
\end{multline*}
Then the explicit form of $\vP$ in (\ref{vP-explicit}) implies that for $\sigma_m \le t \le t+ u < \sigma_{m+1}$, we have
\begin{multline}
 \label{eq:semi-group}
\Pi_i (t+u) = \frac{\sum_{l \in E}  L_l^{\vp} ( t  , m : (\sigma_k, y_k ), k \le m) \cdot \E_l \left[
1_{\{ M_{u} =i\}} \cdot e^{ - I(u)}   \right]  }
{\sum_{j \in E} \sum_{l \in E}  L_{l}^{\vp} ( t  , m : (\sigma_k, y_k ), k \le m) \cdot  \E_{l} \left[
1_{\{ M_{u} =j\}}  e^{ - I(u)}   \right] } \\
= \frac{\sum_{l \in E}  \Pi_l(t) \cdot \E_l \left[
1_{\{ M_{u} =i\}}  e^{ - I(u)}   \right]  }
{\sum_{j \in E} \sum_{l \in E}  \Pi_{l}(t) \cdot  \E_{l} \left[
1_{\{ M_{u} =j\}}  e^{ - I(u)}   \right] }
=  \frac{  \E^{\vP_t} \left[
1_{\{ M_{u} =i\}}  e^{ - I(u)}   \right]  }
{\sum_{j \in E}  \E^{\vP_t} \left[
1_{\{ M_{u} =j\}}  e^{ - I(u)}   \right] }
= \frac{    \P^{\vp}  \{ \sigma_1 > u , M_u =i  \}  }
 {   \P^{\vp}  \{ \sigma_1 > u  \} }\Bigg|_{\vp = \vP_t}.
\end{multline}
On the other hand, the expression in (\ref{def:L}) gives
\begin{multline}
\label{eq:derivation-of-jump-part}
L_i^{\vp} ( \sigma_{m+1} , m+1 : (\sigma_k, Y_k ), k \le m+1)
 = \E^{\vp} \left[
1_{\{ M_t =i\}}   e^{ - I(t) }    \prod_{k=1}^{m+1} \ell(t_k, y_k)
 \right] \Bigg|_{ \substack{ t = \sigma_{m+1} \\  (t_k=\sigma_k, y_k = Y_k )_{k \le m+1}  } } \\
\begin{aligned}
= \lambda_i f_i(Y_{m+1})   \E^{\vp} \left[
1_{\{ M_t =i\}}   e^{ - I(t) }    \prod_{k=1}^m \ell(t_k, y_k)
\right] \Bigg|_{ \substack{ t = \sigma_{m+1} \\  (t_k=\sigma_k, y_k = Y_k )_{ k \le m } }}.
\end{aligned}
\end{multline}
Observe that for fixed time $t$, we have $M_t = M_{t-}$, $\P^{\vp}$-a.s. and $L_i^{\vp} (t , m : (t_k, y_k ), k \le m)= L_i^{\vp} (t- , m : (t_k, y_k ), k \le m)$ when $t_m < t$.
Then we have 
\begin{align*}
L_i^{\vp} ( \sigma_{m+1} , m+1 : (\sigma_k, Y_k ), k \le m+1) =
\lambda_i f_i(Y_{m+1}) \cdot  L_i^{\vp} ( \sigma_{m+1}- , m : (\sigma_k, Y_k ), k \le m),
\end{align*}
due to \eqref{eq:derivation-of-jump-part}. Hence, 
at arrival times $\sigma_1, \sigma_2, \ldots$ of $X$, the process $\vP$ exhibits a jump behavior and satisfies the recursive relation $\Pi_i (\sigma_{m+1} ) =$
\begin{align}
\label{eq:jumps-of-vP}
 \frac{ \lambda_i f_i(Y_{m+1})  L_i^{\vp} ( \sigma_{m+1}- , m : (\sigma_k, Y_k ), i \le m) }
{ \sum_{j \in E} \lambda_j f_j(Y_{m+1})  L_j^{\vp} ( \sigma_{m+1}- , m : (\sigma_k, Y_k ), k \le m) }
=\frac{ \lambda_i f_i(Y_{m+1})  \Pi_i (\sigma_{m+1} - ) }
{ \sum_{j \in E} \lambda_j f_j(Y_{m+1})  \Pi_j (\sigma_{m+1} - ) }
\end{align}
for $m \in \N $.

The identities in \eqref{eq:semi-group} and \eqref{eq:jumps-of-vP} give (\ref{eq:rel-pi-x}-\ref{eq:x-i}). By repeating (\ref{eq:derivation-of-vx}-\ref{eq:semi-group}) with $m=0$ (i.e., with no arrivals on $[0,t+s]$), we see that the paths $t \mapsto \vx(t,\vp)$ have the semigroup property $\vx(t+u , \vp ) = \vx (u , \vx(t,\vp) )$.
\end{proof}

\renewcommand{\thesection}{A2}
\refstepcounter{section}
\makeatletter
\renewcommand{\theequation}{\thesection.\@arabic\c@equation}
\makeatother

\section{Supplementary Results and Other Proofs}
\label{sec:appendix}

\begin{proof}[\textbf{Proof of Proposition~\ref{prop:uniform-convergence}.}]
The inequality $V_m(s, \vp) \le V (s, \vp)$ is immediate. To show the second inequality, let
$\tau $ be an $\F$-stopping time less than $s$ $\P$-a.s.. Then we have
\begin{multline}
\label{proof-ineq1} \E^{\vp} \left[  \int_0^{\tau} e^{- \rho t }  \, C(\vP_t ) dt + e^{- \rho
\tau } H\left(\vP_{\tau} \right) \right] = \E^{\vp} \left[  \int_0^{\tau\wedge \sigma_m} e^{-
\rho t }  \, C(\vP_t ) dt + e^{- \rho \tau \wedge \sigma_m  } H\left(\vP_{\tau \wedge \sigma_m
} \right) \right.
\\
\begin{aligned}
& \hspace{1in} \left. + 1_{ \{  \tau > \sigma_m \} } \left[ \int_{  \sigma_m}^{\tau} e^{- \rho
t } \, C(\vP_t ) dt + e^{- \rho \tau } H\left(\vP_{\tau} \right)
- e^{- \rho \sigma_m  } H\left(\vP_{ \sigma_m} \right)  \right] \right] \\
 &\le
\E^{\vp} \left[  \int_0^{\tau\wedge \sigma_m} e^{- \rho t }  \, C(\vP_t ) dt +
e^{- \rho \tau \wedge \sigma_m  } H\left(\vP_{\tau \wedge \sigma_m } \right) \right.\\
&\hspace{1in} \left. +1_{ \{  \tau > \sigma_m \} }e^{- \rho \sigma_m  }  \left[ \| C
\|\int_0^{T - \sigma_m } e^{- \rho t }\, dt + e^{- \rho (\tau - \sigma_m) } H\left(\vP_{\tau}
\right)
-  H\left(\vP_{ \sigma_m } \right)  \right] \right] \\
&\le \E^{\vp} \left[ \int_0^{\tau\wedge \sigma_m} e^{- \rho t }  \, C(\vP_t ) dt + e^{- \rho
\tau \wedge \sigma_m  } H\left(\vP_{\tau \wedge \sigma_m } \right) \right] +  ( T\| C \| + 2 \|
H \| ) \cdot \E^{\vp} \left[e^{- \rho \sigma_m  }  \, 1_{ \{ T > \sigma_m  \}} \right]
\end{aligned}
\end{multline}
where the last line follows since
$\tau \le s \le T$ and $\{ \tau > \sigma_m \} \subseteq \{ T > \sigma_m \}$. Using the
Cauchy-Schwarz inequality and the inequalities $\P^{\vp} \{ T > \sigma_m  \} \le  \E^{\vp}[ 1_{
\{ T > \sigma_m  \} } ( T/ \sigma_m ) ] \le T \cdot \E^{\vp}[ 1/ \sigma_m ]$ we obtain
\begin{multline}
\label{proof-ineq2} \E^{\vp} \left[  \int_0^{\tau} e^{- \rho t }  \, C(\vP_t ) dt + e^{- \rho
\tau } H\left(\vP_{\tau} \right) \right] \\ \le \E^{\vp} \left[ \int_0^{\tau\wedge \sigma_m}
e^{- \rho t }  \, C(\vP_t ) dt + e^{- \rho \tau \wedge \sigma_m  } H\left(\vP_{\tau \wedge
\sigma_m } \right) \right] + ( T\| C \| + 2 \| H \| ) \sqrt{ T \, \E^{\vp}[ 1/ \sigma_m ] \,
\E^{\vp}[ e^{ -2 \rho \sigma_m } ] }.
\end{multline}

Note that given $M$, 
we have
$\P^{\vp} \left[ \sigma_1 > t | M \right] = e^{- I(t)}$,
where $I(\cdot)$ is defined as in (\ref{def:I}). This implies $\E^{\vp} \left[ e^{- u \sigma_1
} | M \right] = \E^{\vp} \left[ \int_{\sigma_1}^{\infty} u \cdot e^{- u t } dt \, \big| M
\right] =  \int_{0}^{\infty} \P^{\vp} \left[ \sigma_1 \le t | M \right] u \cdot e^{- u t } dt
=$
\begin{align*}
 \int_{0}^{\infty}  \left[ 1- e^{-I(t) }\right] u \cdot e^{- u t } dt \le  \int_{0}^{\infty}  \left[ 1- e^{-\bar{\lambda} t }\right] u \cdot e^{- u t } dt = \frac{ \bar{\lambda} }{ u + \bar{\lambda} }.
\end{align*}
The process $X$ has independent increments conditioned on $M$. Then, the inequality $\E^{\vp}
\left[ e^{- u \sigma_m  } | M \right] \le \left(  \frac{ \bar{\lambda} }{ u + \bar{\lambda} }
\right)^m $ follows by induction and we have
\begin{align}
\label{eq:bound-on-Fourrier} \E^{\vp} \left[ e^{- u \sigma_m  } \right] \le \left(  \frac{
\bar{\lambda} }{ u + \bar{\lambda} }  \right)^m ,
\end{align}
for all $m \in \N$. Moreover, since $1/ \sigma_m = \int_0^{\infty} e^{- \sigma_m u } du $, the
inequality in \eqref{eq:bound-on-Fourrier} gives 
$
\E^{\vp} \left[ 1 /  \sigma_m    \right] \le \int_0^{\infty}
 ( \bar{\lambda}^m /     u+ \bar{\lambda} )^m  du = \bar{\lambda} / ( m-1 ),
$
for $m \ge 2$. By using this upper bound in (\ref{proof-ineq2})
and taking the supremum of both sides we obtain (\ref{eq:uniform-bound}).
\end{proof}

\begin{proof}[\textbf{Proof of Lemma~\ref{lem:prop-of-J}.}]
Boundedness and monotonicity are immediate by the definition of the operator $J$ in \eqref{def:J}. To establish the convexity, we will show that expression in (\ref{eq:J-expectations}) is convex (in $\vp$) for each $t$ and $s$. 

We first note that 
$
\E^{\vp} \left[ e^{- I(t)} \right] =  \sum_{j \in E}  \pi_j  \E_j \left[ e^{- I(t)}  \right]
$ and
$m_i(t,\vp) 
=  \sum_{j \in E}  \pi_j \E_j \left[ 1_{ \{  M_t =i  \}  } e^{- I(t)} \right]
$
are linear in $\vp$ where $m_i(t,\vp)$ is defined in \eqref{def:m} for $i \in E$ and $\E_j$ is the expectation operator $\E \left[ \cdot \; | M_0 =j \right]$ for $j \in E$. Then we see that the expression
$
\E^{\vp} \left[ e^{- I(t)} \right]
   e^{- \rho t }    H\left(\vx( t, \vp ) \right) = \max_{k \in \mathcal{A}} e^{- \rho t }
    \sum_{i \in E}  \mu_{k,i} \, m_i(t,\vp) 
$ 
is convex as the upper envelope of convex functions. Next we let $\vp \mapsto w(s, \vp )$ be a
convex mapping for each $s \ge 0$. Then we have
$
w(s, \vp ) = \sup_{k \in K_s } \beta_{k,0} (s) + \beta_{k,1} (s) \pi_1 + \ldots + \beta_{k,n} (s) \pi_n ,
$
for some index set $K_s$, and each $\beta_{k,i} (s)$ is a function in $s$. Using this characterization with the definition of the operator $S_i$ in (\ref{def:S})
we obtain $\int_{0}^{t} e^{- \rho u}     \sum_{i \in E}  \E^{\vp}  \left[   1_{ \{ M_u =i \} } e^{- I(u)} \right] \cdot \lambda_i S_i w(s-u, \vx(u, \vp))du =
\int_{0}^{t} e^{- \rho u}     \sum_{i \in E} \lambda_i  \, m_i(u,\vp) \; \cdot
$
\begin{align*}
\begin{aligned}
&\qquad \qquad
\left[
\int_{\R^d}\,   \sup_{k \in K_{s-u} }
\left(
\beta_{k,0} (s-u) + \sum_{j \in E} \beta_{k,j} (s-u)
\frac{
\lambda_j f_j(y) \, m_j(u,\vp) 
}
{
\sum_{l \in E} \lambda_l f_l(y) \, m_l(u,\vp) 
}
\right) f_i(y) \nu(dy) \right] du \\
&= \int_{0}^{t} e^{- \rho u} \left[
\int_{\R^d}\,   \sup_{k \in K_{s-u} }
\left(
\sum_{j \in E} \left[ \beta_{k,j} (s-u) + \beta_{k,0} (s-u) \right]
\lambda_j f_j(y) \, m_j(u, \vp) 
\right) \nu(dy) \right]
%
%
du.
\end{aligned}
\end{align*}
Since the expression inside the supremum operator are linear in $\pi$, the integrand in the inner integral is convex, and therefore so is the expression above. 
Also note that
$
 \int_{0}^{t} e^{- \rho u}     \sum_{i \in E}     m_i(u, \vp)
 C( \vx(u, \vp)) du $ $= \int_{0}^{t} e^{- \rho u}     \sum_{i \in E}  c_i \,  m_i(u, \vp) 
 du
$,  
where both the integrand and the integral 
are linear in $\vp$. Finally, as the sum of three convex functions $\vp \mapsto Jw(t,s,\vp)$ is convex. Since $J_0w(s,\vp)$ is the supremum of convex functions, it is again convex.
\end{proof}

\begin{proof}[\textbf{Proof of Lemma~\ref{rem:continuity-of-J}.}]
Let us define $ \Upsilon_T \triangleq \{ (t,s) \in \R_+^2 : 0 \le t \le s \,  , \, s \le T \} $. Then the mapping
$
( t, s , \vp) \mapsto \E^{\vp} \left[ e^{- I(t)} \right]
 \cdot e^{- \rho t }  \cdot  H\left(\vx( t, \vp ) \right)
 =  \left( \sum_{j \in E}  \pi_j  \E_j \left[ e^{- I(t)}  \right] \right)
 e^{- \rho t }  \cdot  H\left(\vx( t, \vp ) \right)$
is continuous on the compact set $\Upsilon_T \times D$ due to bounded convergence theorem, the continuity of $H(\cdot)$, and regularity of paths $t \mapsto \vx( t, \vp )$.

For a (bounded) continuous function $w(\cdot, \cdot )$ on $ [0,T] \times D $ , the function $S_iw(\cdot, \cdot )$ is again continuous for $i \in E $ due to bounded convergence theorem. Next let $(t_m, s_m , \vp_m)_{m \in \N}$ be a sequence converging to a point $(t, s , \vp) \in \Upsilon_T \times D $, and let us denote
$F_i(u,s, \vp) \triangleq C( \vx(u, \vp )) +\lambda_i S_i w(s-u, \vx(u, \vp ))$ for typographical convenience. Then
\begin{align*}
&\Bigg| \int_{0}^{t} e^{- \rho u}     \sum_{i \in E}  \E^{\vp}  \left[  1_{ \{ M_u =i \} } e^{- I(u)} \right] F_i(u,s, \vp) du
- \int_{0}^{t_m} e^{- \rho u}     \sum_{i \in E}  \E^{\vp_m}  \left[   1_{ \{ M_u =i \} } e^{- I(u)} \right]   F_i(u ,s_m , \vp_m ) \, du \Bigg|\\
&\le \Bigg| \int_{t_m}^{t} e^{- \rho u}     \sum_{i \in E}  \E^{\vp}  \left[   1_{ \{ M_u =i \} } e^{- I(u)} \right]  F_i(u,s, \vp) \, du \Bigg| \\
 &\qquad \qquad + \Bigg|  \int_{0}^{t_m} e^{- \rho u} \cdot  \sum_{i \in E} \Big( \E^{\vp}  \left[   1_{ \{ M_u =i \} } e^{- I(u)} \right]  F_i(u,s, \vp)
  -  \E^{\vp_m}  \left[   1_{ \{ M_u =i \} } e^{- I(u)} \right]  F_i(u,s_m, \vp_m)
  \Big)
  du \Bigg| \\
&\le ( \| C \| +  \bar{\lambda} \| w \| )  \int_{t_m}^{t} e^{- \rho u} \, du +   \int_{0}^{T}
e^{- \rho u} \sum_{i \in E} \Bigl| \Bigl( \ldots - \ldots \Bigr) \Bigr| du.
\end{align*}
Note that as $m \to \infty$, the second integrand above goes to $0$, and the whole expression vanishes due to dominated convergence theorem.
Hence, we conclude that $J w (t, s, \vp)$ in (\ref{eq:J-expectations}) is continuous on $ \Upsilon_T \times D $. Since this last set is compact, it follows that $J w (t, s, \vp)$ is uniformly continuous and $(s,\vp) \mapsto J_0 w(s,\vp) = \sup_{t \le s } J_0 w(t.s,\vp)$ is continuous on $[0,T] \times D $.
\end{proof}

\noindent To prove Proposition \ref{prop:V-n-epsilon}, we first establish the following intermediate
result.

\begin{prop}
\label{prop-with-S-eps-stopping-times} For every $\eps \ge 0$, let us define
\begin{align}\label{eq:defn-r-m}
r_{m}^{\eps}(s, \vec{\pi}) \triangleq \inf\{ t \in [0,s]: J v_{m}(t,s,\vec{\pi}) \ge J_0 v_{m}
(s,\vec{\pi})-\eps\}, \qquad
\vec{\pi} \in D,
\end{align}
\begin{align*}
S_1^{\eps}(s,\vp) \triangleq r_0^{\eps}(s, \vp) \wedge \sigma_1
\quad \text{and} \quad S_{m+1}^{\eps}(s,\vec{\pi}) \triangleq
\begin{cases}
r_{m}^{\eps/2}(s,\vp) & \text{if $\sigma_1>r_m^{\eps/2}(s,\vp)$},
\\ \sigma_1+ S_m^{\eps/2}
(s - \sigma_1 , \vP_{\sigma_1}) & \text{if $\sigma_1 \leq r_m^{\eps/2}(s,\vp)$}.
\end{cases}
\end{align*}
Then, for every $m \geq 1$ 
we have
\begin{equation}
\label{eq:eps-opt} \E^{\vp} \left[ \int_0^{S_m^{\eps}(s,\vp) } e^{- \rho t} C( \vP_{t} )  dt +
e^{- \rho \cdot S_m^{\eps}(s,\vp)  } H\left(\vP_{ S_m^{\eps}(s,\vp) } \right) \right] \ge
v_{m}(s, \vec{\pi})-\eps.
\end{equation}
\end{prop}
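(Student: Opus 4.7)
The plan is to argue by induction on $m$. The base case $m=1$ should be essentially a bookkeeping verification. By definition, $S_1^{\epsilon}(s,\vp) = r_0^{\epsilon}(s,\vp) \wedge \sigma_1$, and since $v_0 \equiv H$ does not depend on the time argument, plugging $w = v_0 = H$ into \eqref{def:J} and unpacking the two indicators $1_{\{t<\sigma_1\}}$ and $1_{\{\sigma_1 \le t\}}$ collapses the expression to
\[
Jv_0(t,s,\vp) = \E^{\vp}\!\left[\int_0^{t\wedge\sigma_1} e^{-\rho u} C(\vP_u)\,du + e^{-\rho(t\wedge\sigma_1)} H(\vP_{t\wedge\sigma_1})\right].
\]
Evaluating at $t=r_0^{\epsilon}(s,\vp)$ produces exactly the expectation on the left-hand side of \eqref{eq:eps-opt} for $m=1$, and by the definition \eqref{eq:defn-r-m} of $r_0^{\epsilon}$ this is bounded below by $J_0 v_0(s,\vp) - \epsilon = v_1(s,\vp) - \epsilon$.

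For the inductive step, assume \eqref{eq:eps-opt} holds for $m$ (with any choice of $\epsilon \ge 0$) and consider $S_{m+1}^{\epsilon}$. I would decompose the expectation along $\{\sigma_1 > r_m^{\epsilon/2}(s,\vp)\}$ and its complement, which matches the two branches in the definition of $S_{m+1}^{\epsilon}$. On $\{\sigma_1 > r_m^{\epsilon/2}\}$ the stopping time equals $r_m^{\epsilon/2}$ and contributes $\int_0^{r_m^{\epsilon/2}} e^{-\rho u} C(\vP_u)\,du + e^{-\rho r_m^{\epsilon/2}} H(\vP_{r_m^{\epsilon/2}})$. On $\{\sigma_1 \le r_m^{\epsilon/2}\}$ the stopping time equals $\sigma_1 + S_m^{\epsilon/2}(s-\sigma_1,\vP_{\sigma_1}) \circ \theta_{\sigma_1}$; invoking the strong Markov property of $\vP$ at $\sigma_1$ and applying the induction hypothesis to the shifted problem with initial distribution $\vP_{\sigma_1}$ and horizon $s-\sigma_1$, the conditional expectation over the tail is bounded below by $v_m(s-\sigma_1,\vP_{\sigma_1}) - \epsilon/2$.

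Combining the two pieces (and pulling the constant $\epsilon/2$ out by bounding $e^{-\rho \sigma_1} \le 1$ and $\P^{\vp}\{\sigma_1 \le r_m^{\epsilon/2}\} \le 1$), the expectation on the left-hand side of \eqref{eq:eps-opt} for $m+1$ is at least
\[
\E^{\vp}\!\left[\int_0^{r_m^{\epsilon/2}\wedge\sigma_1}\! e^{-\rho u} C(\vP_u)\,du + 1_{\{r_m^{\epsilon/2}<\sigma_1\}} e^{-\rho r_m^{\epsilon/2}} H(\vP_{r_m^{\epsilon/2}}) + 1_{\{\sigma_1\le r_m^{\epsilon/2}\}} e^{-\rho\sigma_1} v_m(s-\sigma_1,\vP_{\sigma_1})\right] - \tfrac{\epsilon}{2},
\]
which is exactly $Jv_m(r_m^{\epsilon/2},s,\vp) - \epsilon/2$ by \eqref{def:J}. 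Finally, the defining property \eqref{eq:defn-r-m} of $r_m^{\epsilon/2}$ gives $Jv_m(r_m^{\epsilon/2},s,\vp) \ge J_0 v_m(s,\vp) - \epsilon/2 = v_{m+1}(s,\vp) - \epsilon/2$, so the total is at least $v_{m+1}(s,\vp) - \epsilon$, closing the induction.

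The main obstacle I anticipate is the strong Markov step: one must first check that $S_m^{\epsilon}(s,\vp)$, recursively defined in terms of $r_{m-1}^{\epsilon/2}$ and of $S_{m-1}^{\epsilon/2}$ applied to the post-$\sigma_1$ trajectory, really is an $\F$-stopping time bounded by $s$, and then justify splitting the path at $\sigma_1$ and applying the induction hypothesis to the shifted, fully observable Markov family with the new initial condition $\vP_{\sigma_1}$ and remaining horizon $s-\sigma_1$. The halving of $\epsilon$ at every step (so that the telescoping error $\sum \epsilon/2^k$ stays below $\epsilon$) and the continuity/boundedness of $v_m$ from Lemma \ref{lem:properties-of-v-m-s} (ensuring the infimum defining $r_m^{\epsilon}$ is attained for $\epsilon=0$ and is a legitimate deterministic time otherwise) are the auxiliary ingredients that make this work.
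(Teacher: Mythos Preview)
Your proposal is correct and follows essentially the same approach as the paper: induction on $m$, with the base case reducing to $Jv_0(r_0^{\eps},s,\vp)\ge v_1(s,\vp)-\eps$ via the observation that $v_0=H$ is time-independent, and the inductive step handled by splitting on $\{\sigma_1 \lessgtr r_m^{\eps/2}\}$, invoking the strong Markov property at $\sigma_1$, and combining the $\eps/2$ from the induction hypothesis with the $\eps/2$ from the definition of $r_m^{\eps/2}$. One small clarification: the ``telescoping'' remark is a bit misleading---at the level of the proof you only ever split $\eps$ into $\eps/2+\eps/2$ once per step, with the recursive halving being absorbed into the induction hypothesis.
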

\begin{proof}
We will prove (\ref{eq:eps-opt}) by an induction on $m \in \mathbb{N}.$ For $m=1$,
thanks to
(\ref{def:J}) and (\ref{eq:defn-r-m}) the left-hand-side of (\ref{eq:eps-opt})
equals
$\E\left[  \int_0^{  r_0^{\eps}(s, \vec{\Pi}_0) \wedge \sigma_1  } e^{- \rho t} C( \vP_{t} )  dt
+ e^{- \rho   \cdot  r_0^{\eps}(s, \vec{\Pi}_0) \wedge \sigma_1  } H\left(\vP_{
r_0^{\eps}(s, \vec{\Pi}_0) \wedge \sigma_1 } \right)\right]$
$J H(r_0^{\eps}(s, \vec{\pi}), s, \vec{\pi}) \equiv J v_0 (r_0^{\eps}(s, \vec{\pi}), s, \vec{\pi}) \ge v_1(s, \vec{\pi})-\eps$, which proves (\ref{eq:eps-opt}) for $m=1$.

Now, let us suppose
(\ref{eq:eps-opt}) holds for $\eps \geq 0$, and for some $m>1$, and let us prove that it also
holds when $m$ is replaced by $m+1$. Since $S_{m+1}^{\eps}(s,\vp) \wedge \sigma_1 =
r_m^{\eps}(s, \vec{\Pi}_0) \wedge \sigma_1$, we have
\begin{multline*}
\hspace{-0.5cm}
\begin{aligned}
&\E^{\vp} \left[  \int_0^{  S_{m+1}^{\eps  }(s,\vp)   } \!\! e^{- \rho t} C( \vP_{t} )  dt +
e^{- \rho \cdot S_{m+1}^{\eps  }(s,\vp)  }  \cdot H\left(\vP_{ S_{m+1}^{\eps  }(s,\vp) } \right) \right]\\
&= \E^{\vp} \left[ \int_0^{  S_{m+1}^{\eps  }(s,\vp)  \wedge \sigma_1 } e^{- \rho t} C( \vP_{t}
)  dt +
1_{\{S_{m+1}^{\eps  }(s,\vp)<\sigma_1\}} e^{- \rho \cdot S_{m+1}^{\eps  }(s,\vp)  } H\left(\vP_{ S_{m+1}^{\eps  }(s,\vp) } \right) \right. \\
&\hspace{1.5in}
 \left.  + 1_{\{S_{m+1}^{\eps  } (s,\vp) \ge \sigma_1\}} \Bigl[
\int_{\sigma_1 }^{  S_{m+1}^{\eps  }(s,\vp)  \wedge \sigma_1 } \!\! e^{- \rho t} C( \vP_{t} )
dt + e^{- \rho \cdot S_{m+1}^{\eps  }(s,\vp)  }  H\left(\vP_{ S_{m+1}^{\eps  }(s,\vp) } \right)
\Bigr]  \right]\\
&= \E^{\vp} \left[ \int_0^{  r_m^{\eps /2 }(s, \vec{\Pi}_0)  \wedge \sigma_1 } \!\! e^{- \rho
t} C( \vP_{t} )  dt +
1_{\{ r_m^{\eps /2 }(s, \vec{\Pi}_0) <\sigma_1\}} H\left( \vP_{ r_m^{\eps /2 }(s, \vec{\Pi}_0) } \right)
+ 1_{\{ r_m^{\eps /2 }(s, \vec{\Pi}_0) \ge \sigma_1\}} \; \cdot
\right. \\
&\;
 \left.   \Bigl[
\int_{\sigma_1 }^{  \sigma_1+ S_m^{\eps /2 /2} (s - \sigma_1 , \vP_{\sigma_1}) } \!\! e^{- \rho
t} C( \vP_{t} )  dt + e^{- \rho \cdot  \left( \sigma_1+ S_m^{\eps /2 /2} (s - \sigma_1 ,
\vP_{\sigma_1}) \right)   } H\left(\vP_{\sigma_1+ S_m^{\eps /2 /2} (s - \sigma_1 ,
\vP_{\sigma_1} ) } \right)
\Bigr]  \; \right]\\
&= \E^{\vp} \left[  \int_0^{  r_m^{\eps /2 }(s, \vec{\Pi}_0)  \wedge \sigma_1 } \!\! e^{- \rho
t} C( \vP_{t} )  dt + 1_{\{ r_m^{\eps /2 }(s, \vec{\Pi}_0) <\sigma_1\}} H\left( \vP_{ r_m^{\eps
/2 }(s, \vec{\Pi}_0) } \right)
\right.
\left.
+1_{\{ r_m^{\eps /2 }(s, \vec{\Pi}_0) \ge \sigma_1\}} e^{- \rho \cdot \sigma_1 }  f_m( s -
\sigma_1 , \vec{\Pi}_{\sigma_1})
 \right]
\end{aligned}
\end{multline*}
where the last line follows from the strong Markov property and where 
\begin{equation*}
\label{ineq:for-f} f_{m}(u, \vec{\pi})=\E^{\vp} \left[ \int_0^{ S_m^{\eps/2}(u, \vp) } \!\!
e^{- \rho t} C( \vP_{t} )  dt + e^{- \rho \cdot S_m^{\eps/2}(u, \vp) } \cdot H\left(
\vec{\Pi}_{S_m^{\eps/2}(u, \vp) } \right) \right] \ge v_{m}(u, \vec{\pi})-\eps/2.
\end{equation*}
The inequality above follows from the induction hypothesis. Then we obtain
\begin{multline*}
\E^{\vp} \left[  \int_0^{  S_{m+1}^{\eps  }(s,\vp)   } \!\! e^{- \rho t} C( \vP_{t} )  dt +
e^{- \rho \cdot S_{m+1}^{\eps  }(s,\vp)  }  \cdot H\left(\vP_{ S_{m+1}^{\eps  }(s,\vp) } \right) \right] \ge \E^{\vp} \left[ \int_0^{  r_m^{\eps /2 }(s, \vec{\Pi}_0)  \wedge \sigma_1 } \!\! e^{- \rho t} C( \vP_{t} )  dt \right. \\
\begin{aligned}
&+ 1_{\{ r_m^{\eps /2 }(s, \vec{\Pi}_0) <\sigma_1\}} H\left( \vP_{ r_m^{\eps /2 }(s,
\vec{\Pi}_0) } \right)
+ 1_{\{ r_m^{\eps /2 }(s, \vec{\Pi}_0) \ge \sigma_1\}} e^{- \rho \cdot \sigma_1 } \cdot v_m( s
- \sigma_1 , \vec{\Pi}_{\sigma_1}) \Bigg] -\frac{\eps}{2}
\end{aligned}
\end{multline*}
$=J v_m (r_{m}^{\eps/2}(\vec{\pi}),s, \vec{\pi})-\frac{\eps}{2} \ge v_{m+1}(\vec{\pi})- \eps$. Here the  equality follows from the definition of the operator $J$ in (\ref{def:J}) and
the second equality follows from (\ref{eq:defn-r-m}). This concludes the proof of
(\ref{eq:eps-opt}).
\end{proof}

\begin{proof}[\textbf{Proof of Proposition~\ref{prop:V-n-epsilon}.}]
The inequality $V_m \ge v_m$
follows 
from (\ref{eq:eps-opt}) since $S_m^{\eps} (s,\vp) \leq s \wedge \sigma_m$
by construction. To prove the reverse inequality $V_m \le v_m$ we
will show 
\begin{equation}\label{eq:V-vs-v}
\E\left[  \int_0^{  \tau \wedge \sigma_m } e^{- \rho t} C( \vP_{t} )  dt + e^{- \rho \cdot \tau \wedge \sigma_m } \cdot  H\left( \vP_{\tau \wedge \sigma_m } \right)  \right] \le v_m(s, \vec{\pi}),
\end{equation}
for every bounded stopping time $\tau \le s$ and $m \in \mathbb{N}$, by showing 
\begin{multline}
\label{eq:V-vs-v-int}
\E\left[  \int_0^{  \tau \wedge \sigma_m } e^{- \rho t} C( \vP_{t} )  dt +
e^{- \rho \cdot \tau \wedge \sigma_m } \cdot  H\left( \vP_{\tau \wedge \sigma_m }\right)\right]
\le \E\left[  \int_0^{  \tau \wedge \sigma_{m-k+1} } e^{- \rho t} C( \vP_{t} )  dt \right. \\
\begin{aligned}
 &+\left.
1_{\{\tau \ge \sigma_{m-k+1}\}}
e^{- \rho \cdot \sigma_{m-k+1} } v_{k-1}\left(s  - \sigma_{m-k+1}  , \vP_{ \sigma_{m-k+1} } \right) \right.
\left.+1_{\{\tau < \sigma_{m-k+1}\}}  e^{- \rho \cdot \tau  } \cdot  H\left( \vP_{\tau} \right)
\right] =: RHS_{k-1},
\end{aligned}
\end{multline}
for $k=1, \cdots, m+1$. The inequality (\ref{eq:V-vs-v}) will then follow from
(\ref{eq:V-vs-v-int}) by taking $k=m+1$. For $k=1$, (\ref{eq:V-vs-v-int}) is satisfied as an equality since
$v_0(s, \cdot)=H(\cdot)$, for all $s \in [0,T]$. Now, let us assume (\ref{eq:V-vs-v-int}) holds
for some $1 \leq k < m+1$, and let us prove that it also holds for $k+1$.

Note that $RHS_{k-1}$ in \eqref{eq:V-vs-v-int} can be written as
$RHS_{k-1}=RHS_{k-1}^{(1)}+RHS_{k-1}^{(2)}$,
in terms of
\begin{align*}
\begin{aligned}
RHS_{k-1}^{(1)} & \triangleq \E\left[
\int_0^{  \tau \wedge \sigma_{m-k} } e^{- \rho t} C( \vP_{t} )  dt + 1_{\{\tau<\sigma_{m-k}\}} e^{- \rho \cdot \tau  } \cdot H(\vP_{\tau}  )\right],
\\RHS_{k-1}^{(2)} & \triangleq \E\Bigg[
1_{\{\tau \ge \sigma_{m-k}\}} \cdot
\Bigg( \int_{ \sigma_{m-k} }^{  \tau \wedge \sigma_{m-k+1} } e^{- \rho t} C( \vP_{t} )  dt
\\ &\quad  + 1_{\{\tau \ge \sigma_{m-k+1}\}}
e^{- \rho \cdot \sigma_{m-k+1} } \cdot v_{k-1}\left( s  - \sigma_{m-k+1}  , \vP_{ \sigma_{m-k+1} } \right)+1_{\{\tau<\sigma_{m-k+1}\}}
e^{- \rho \cdot \tau  } \cdot  H\left( \vP_{\tau}  \right) \Bigg)
 \Bigg].
\end{aligned}
\end{align*}
Lemma~\ref{lem:bremaud} implies that there exists an
$\Fc^X_{\sigma_{m-k}}$-measurable random variable $R_{m-k}   $ such that
\begin{align*}
\tau \wedge \sigma_{m-k+1}=(\sigma_{m-k}+R_{m-k}) \wedge
\sigma_{m-k+1} \quad \text{ on } \; \{\tau \geq \sigma_{m-k}\}.
\end{align*}
Moreover since $\tau \le s$, we have $R_{m-k} \le s -\sigma_{m-k} $ on $\{\tau \geq \sigma_{m-k}\}$. Then we obtain
$RHS_{k-1}^{(2)}=$
\begin{multline*}
 \E\Bigg[
1_{\{\tau \ge \sigma_{m-k}\}} \cdot \Bigg( \! \int_{ \sigma_{m-k} }^{ (\sigma_{m-k}+R_{m-k})
\wedge \sigma_{m-k+1} } \!\! e^{- \rho t} C( \vP_{t} )  dt + 1_{\{\tau \ge \sigma_{m-k+1}\}}
e^{- \rho  \sigma_{m-k+1} }  v_{k-1}\left( s  - \sigma_{m-k+1}  , \vP_{ \sigma_{m-k+1} }
\right)
\\  +1_{\{  \sigma_{m-k}+R_{m-k}   <\sigma_{m-k+1}\}}
e^{- \rho  ( \sigma_{m-k}+R_{m-k} )   } \cdot  H\left( \vP_{ \sigma_{m-k}+R_{m-k} }  \right) \Bigg)
 \Bigg].
 \end{multline*}
Due to strong Markov property, the last expression can be written as
\begin{align}
\label{eq:RHS-k-1-2}
RHS_{k-1}^{(2)}=  \E\left[1_{\{\tau \ge
\sigma_{m-k}  \}} \,  \cdot e^{- \rho \cdot \sigma_{m-k} }  g_{k-1} \left( R_{m-k},s - \sigma_{m-k}, \vec{\Pi}\left( \sigma_{m-k} \right) \right) \right],
\end{align}
where $g_{k-1}(r,u , \vec{\pi}) \triangleq$
\begin{align*}
 \E \Big[
\int_0^{  r \wedge \sigma_{1} } e^{- \rho t} C( \vP_{t} )  dt
+ 1_{\{r < \sigma_1\}}   e^{- \rho r }   H\left(  \vP \left( r \right) \right)
+
1_{\{r \ge \sigma_1\}}   e^{- \rho \sigma_1 }  v_{k-1} \left( u- \sigma_1,  \vP \left( \sigma_1 \right) \right)
\Big] , 
\end{align*}
for $r \le u$.
Then, using the definition of the operator $J$ in \eqref{def:J} we have
\begin{align*}
g_{k-1}(r,u , \vec{\pi}) = J v_{k-1}(r,u,\vp) \le J_0 v_{k-1}(u,\vp) = v_k (u,\vp).
\end{align*}
As a result, we obtain 
$ RHS_{k-1}^{(2)} \le
\E\left[  1_{\{\tau \geq \sigma_{m-k}\}}
 e^{- \rho \cdot \sigma_{m-k} } v_{k}  \left( u- \sigma_{m-k},  \vP \left( \sigma_1 \right) \right)
 \right] $,
and this further implies 
\begin{multline}
\E\left[
\int_0^{ \tau \wedge \sigma_m } e^{- \rho t} C( \vP_{t} )  dt  + e^{- \rho \cdot \tau \wedge \sigma_m }    H\left( \vP_{\tau \wedge \sigma_m }\right)\right] \\
\begin{aligned}
&\le RHS_{k-1} =
\E\left[
\int_0^{  \tau \wedge \sigma_{m-k} } e^{- \rho t} C( \vP_{t} )  dt + 1_{\{\tau<\sigma_{m-k}\}} e^{- \rho \cdot \tau  } \cdot H(\vP_{\tau}  )\right] + RHS^{(2)}_{k-1} \\
&\le
 \E\left[ \int_0^{  \tau \wedge \sigma_{m-k} } e^{- \rho t} C( \vP_{t} )  dt +
1_{\{\tau<\sigma_{m-k}\}} e^{- \rho \cdot \tau  }  H( \vP_{\tau} )
+
1_{\{\tau \geq \sigma_{m-k}\}}
\cdot e^{- \rho \cdot \sigma_{m-k} }  v_{k}  \left( u- \sigma_{m-k},  \vP_{ \sigma_1 } \right)
\right].
\end{aligned}
\end{multline}
Since the last term equals $RHS_k$, this completes the proof of (\ref{eq:V-vs-v-int}) by induction.
Equation (\ref{eq:V-vs-v}) follows when we set $k=m+1$. Finally, taking the
infimum of both sides in (\ref{eq:V-vs-v}), we arrive at the
desired inequality $V_m \le v_m$.
\end{proof}

\begin{proof}[\textbf{Proof of Lemma \ref{lemm:J-dyn-prog}.}]
Using the definition of the operator $J$ in \eqref{def:J} we obtain
\begin{multline*}
Jw(t,s,\vp) = \E^{\vp} \left[ \int_0^{t \wedge \sigma_1} \!\! e^{- \rho t} C(\vP_t ) \, dt +
1_{ \{ t < \sigma_1 \} } \cdot e^{-\rho t} H(\vP_t) + 1_{ \{ \sigma_1 \le t \}} \cdot e^{-\rho
\sigma_1} w(s-\sigma_1, \vP_{\sigma_1} ) \right] \\
\begin{aligned}
& = \E^{\vp}\Bigl[ \int_0^{u \wedge \sigma_1} \!\! e^{- \rho t} C(\vP_t ) \, dt + \int_{u
\wedge \sigma_1}^{t \wedge \sigma_1} \!\! e^{- \rho t} C(\vP_t ) \, dt
 - 1_{ \{ u < \sigma_1 \} }\cdot
e^{-\rho u} H(\vP_{u} ) +
1_{\{ u < \sigma_1 \} } \cdot e^{-\rho u} H(\vP_{u} ) \\
& \hspace{0.3in} +1_{\{ t < \sigma_1 \} } \cdot e^{-\rho t} H(\vP_t) + 1_{\{ \sigma_1 \le u\}}
e^{-\rho \sigma_1}   w(s-\sigma_1, \vP_{\sigma_1}) +
1_{\{ u < \sigma_1 \le t \} }\cdot e^{-\rho \sigma_1} w(s-\sigma_1, \vP_{\sigma_1}) \Bigr] \\
& = Jw(u,s,\vp) + \E^{\vp}\Bigl[ - 1_{ \{ \sigma_1 > u \} } \cdot e^{-\rho u}   H(\vP_{u} )
 + 1_{ \{ \sigma_1
> u \} }  \Big(
\int_{u }^{t \wedge \sigma_1} \!\! e^{- \rho t} C(\vP_t ) \,dt \Big) \\
& \hspace{0.3in} + 1_{ \{ \sigma_1
> u \} }   \Big(
 1_{ \{ \sigma_1 > t \} } \cdot e^{-\rho t}H(\vP_t)
+  1_{ \{ \sigma_1 \le t \} } \cdot e^{-\rho \sigma_1 } \cdot w(s-\sigma_1, \vP_{\sigma_1})
\Big) \Bigr] .
\end{aligned}
\end{multline*}
On $\{ \sigma_1 > u\}$, we have $\sigma_1 \wedge t = u + (\sigma_1 \wedge (t-u))\circ
\theta_u$. Then the Markov property of $\vP$ gives
\begin{align*}
 &Jw(t,s,\vp)= Jw(u,s,\vp) - \P^{\vp} \{ \sigma_1 > u\} e^{-\rho u}
H(\vx(u,\vp)) \\
&+ \E^{\vp} \Bigl[ 1_{\{ \sigma_1 > u \} } e^{-\rho u}\, \E^{\vP_{u} }\left[ \int_{0 }^{t -u}
e^{- \rho t} C(\vP_t ) dt +
1_{\{ \sigma_1 > t-u\} } e^{-\rho(t-u)}H(\vP_{t-u})  \right. \\
& \hspace{3in} \left. +  1_{\{ \sigma_1 \le (t-u) \} }
e^{-\rho \sigma_1} w(s-u-\sigma_1, \vP_{\sigma_1}) \right] \Bigr] \\
& = Jw(u,s,\vp) - \P^{\vp}\{  \sigma_1 > u\} e^{-\rho u} H(\vx(u,\vp)) + \E^{\vp}
\left[1_{\{ \sigma_1 > u \} } \cdot e^{-\rho u} Jw(t-u, s-u, \vP_{u} ) \right] \\
& = Jw(u,s,\vp) + \P^{\vp}\{  \sigma_1 > u\} e^{-\rho u} \left[Jw(t-u, s-u,
\vx(u,\vp))-H(\vx(u,\vp)) \right].
\end{align*} \end{proof}

\begin{proof}[\textbf{Proof of Lemma~\ref{lemm:continue-at-good-states}}]
Let $\vec{e}_i \in D $ denote the point whose $i$'th component is equal to 1. To establish the result it is sufficient to find a closed ball with strictly positive radius around $\vec{e}_i$ (e.g., a region of the form $\{ \vp \in D: || \vp - \vec{e}_i || \le \delta \}$ for some $\delta >0$, where $|| \cdot ||$ denotes the Euclidian norm on $\R^n$)
such that $H(\vp) < v_1 (s,\vp ) \le
V(s,\vp) $ for all points on this closed ball.
%

We first note that there exists a closed ball $B_0$ around $\vec{e}_i$ with positive radius such that $H(\vp) = \max_{k \in \mathcal{A}^*(i) } H_k(\vp)$, for $\vp \in B_0$.
Then on $B_0$ and for small $s > 0$ we have
$ v_1(s,\vp) = \sup_{t \le s } J_0 H(t,s,\vp)  = \max_{k \in \mathcal{A}^*(i)  } \sup_{t \in[0, s]} J^{(k)}_0 H(t, \vp) $,  where $J^{(k)}_0 H(t,\vp) \triangleq$
\begin{align*}
 \E^{\vp}\left[ e^{- I(t)} \right] e^{- \rho t}H_k(\vx(t,\vp))
 + \int_0^t e^{- \rho u } \sum_{j \in E} m_j(u,\vp) \bigl( C(\vx(u,\vp)) + \lambda_j S_j H( \vx(u,\vp) ) \bigr)
du.
\end{align*}
Then, using \eqref{eq:vx-dyn} we have 
$  d J^{(k)}_0
H(t,\vp) / dt\big|_{t=0}= $
\begin{multline*}
 \left( - \rho - \sum_{j \in E} \lambda_j \pi_j \right) H_k(\vp)
+ \sum_{j \in E} \mu_{k,j} \left( \sum_{l \in E} q_{l,j} \pi_l - \lambda_j \pi_j + \pi_j \sum_{l \in E} \lambda_l \pi_l \right)
 + C(\vp) + \sum_{j \in E} \lambda_j \pi_j S_j H(\vp) \ge \\
 \left( - \rho - \sum_{j \in E} \lambda_j \pi_j \right) H_k(\vp)
+ \sum_{j \in E} \mu_{k,j} \left( \sum_{l \in E} q_{l,j} \pi_l - \lambda_j \pi_j + \pi_j \sum_{l \in E} \lambda_l \pi_l \right)
 + C(\vp) + \sum_{j \in E} \lambda_j \pi_j S_j H_k(\vp)  .
\end{multline*}
The right hand side of the inequality above is 
uniformly continuous on the compact set $D$. Its value at the point $\vec{e}_i$ equals 
$c_i - \rho \mu_{k,i} + \sum_{j \ne i} (\mu_{k,j}- \mu_{k,i}) q_{k,j} > 0$. Hence for some $\delta_k
>0$ there exists an open ball (contained in $B_0$) with radius $\delta_k$ around $\vec{e}_i$
such that $d J^{(k)}_0 H(t,\vp) / dt\big|_{t=0}> 0$ for all the points in this ball. Let $B_k$
be the closed ball around the same point $\vec{e}_i$ with radius $\delta_k /2$. Then on the intersection
set $ \bigcap_{k \in \mathcal{A}^*(i)  } B_k  $ the mapping $\vp \mapsto d J^{(k)}_0 H(t,\vp) /
dt\big|_{t=0}$ is strictly positive and $ \sup_{t \ge 0 } J^{(k)}_0 H(t,\vp) > H_k(t,\vp)$ for all $k \in \mathcal{A}^*(i) $. This implies
that $v_1 (s,\vp ) > H(\vp)$ for all $s >0$ on $\bigcap_{k \in \mathcal{A}^*(i)  } B_k$.
\end{proof}

\begin{proof}[\textbf{Proof of Lemma~\ref{lem:nonempty-acceptance-region}}]
%
%
Let $i \in I^*$ for $I^*$ defined in \eqref{optimal-notation}.
To establish the result, we will find $\pi^{s}_{i} <1$ such that $H(\vp) = J_0 w(s,\vp) $ on $\{ (s, \vp)  \in [0,T] \times D \, : \, \pi^{s}_{i} \le \pi_{i} <1\}$ for a bounded function $w(\cdot) \le \|H \| = \bar{\mu} \triangleq \max_{i,k} \mu_{i,k} $. Since $ V $ is bounded by the same upper bound (recall that $c_i \le 0$ for $i \in E$ by assumption) and satisfies $V(s,\vp) = J_0 V(s,\vp)$
we will have $H(\cdot) = V(\cdot)$ on this region.

\noindent \textbf{Part I:} Let us first define 
\begin{align}
\label{def:L-k}
F_k(t,\vp)\triangleq \E^{\vp} \left[ e^{- I(t) - \rho t } \right]
    H_k(\vx( t, \vp ) )
 + \int_{0}^{t}    e^{- \rho u}  \sum_{j \in E} m_j (u,\vp)
 \Big[ C(\vx( t, \vp ) )+ \lambda_j \bar{\mu}  \Big]  du .
\end{align}
Since $H(\vp) \le J_0 w(s, \vp) = \sup_{ t \in [0,s] } Jw(t,s,\vp) \le \sup_{ t \in [0,s] } \max_{ k \in A } F_k(t,\vp) =  \max_{k \in A} \sup_{ t  \in [0,s] } F_k(t,\vp)$ (see \eqref{eq:J-expectations}), it is enough to show that for some $  \pi^{s}_{i}  <  1$ we have $\sup_{t \ge 0} F_k(t,\vp) = H_k(\vp)$ for all $k \in A$.

Let $\hat{\pi}_{i} < 1$ be a value such that $H(\vp) = \max_{k \in \mathcal{A}^*} h_{k}(\vp)$,
where $\mathcal{A}^* \triangleq \{ k \in \mathcal{A} :\, \mu_{k,i} = \bar{\mu}  \}$. That is,
we have $\mu_{k,i} =\bar{\mu} $ for all $k \in \mathcal{A}^*$ (and $i \in I^*$). Note that
$\hat{\pi}_{i}$ can for instance be selected as
 \begin{align*}
 \hat{\pi}_{i} = \max_{k \notin \mathcal{A}^*} \frac{\bar{\mu} - \min_{k,j} \mu_{k,j} }{2\, \bar{\mu} - \min_{k,j} \mu_{k,j} - a_{k,i}}.
 \end{align*}
Let us then define the hitting time $ T( \vp,  \hat{\pi}_{i} ) \triangleq \inf \left\{ t \ge 0 \, : \, x_{i}(t,\vp) \le  \hat{\pi}_{i} \right\}$. For $t \le T( \vp,  \hat{\pi}_{i} )$, we have $  \max_{k \in \mathcal{A}  } H_k \left(\vx( t, \vp ) \right) = \max_{k \in \mathcal{A}^* }  H_k(\vx( t, \vp ) )$, which implies $  \max_{k \in \mathcal{A}} F_k ( t, \vp ) = \max_{k \in \mathcal{A}^* }   F_k( t, \vp ) $. Note that we have
\begin{align}
\label{derivative:L-k}
\frac{ d F_k(t,\vp) }{dt}  =    \sum_{i \in E}  \E^{\vp}  \left[ 1_{ \{ M_t =i \} } e^{- I(t)- \rho t } \right]
 \left\{  - (\lambda_i + \rho)   \cdot H_k(\vx(t,\vp)) +  \frac{ d H_k(\vx(t,\vp))  }{dt} + C( \vx(t,\vp) ) + \lambda_i  \| H \| \right\}
\end{align}
where
\begin{align}
\label{derivative:h-x}
\frac{ d H_k(\vx(t,\vp))  }{dt}  =  \sum_{i \in E} \mu_{k,i} \left( \sum_j^n q_{ji}  x_j (t,\vp) - \lambda_i  x_i (t,\vp)  + x_i (t,\vp)   \sum_j^n \lambda_{j}  x_j (t,\vp)  \right)
\end{align}
due to \eqref{eq:vx-dyn}.
Let us denote $\underline{\mu} \triangleq \min_{k,i} \mu_{k,i} $. For $k \in A^*$, we have $H_k (\vx(t,\vp)) = \bar{\mu} x_{i}(t,\vp) + \sum_{i \ne i} \mu_{k,i} x_{i}(t,\vp) \ge \bar{\mu} x_{i}(t,\vp) + \underline{\mu}( 1- x_{i}(t,\vp) )$. Using this inequality, we get 
an upper bound for the derivative in \eqref{derivative:L-k} as
\begin{align}
\begin{aligned}
\label{upper-bound-on-deriv-K}
\frac{ d F_k(t,\vp) }{dt}
 \le  \E^{\vp}  \left[ e^{- I(t)- \rho t } \right] \Bigg\{   \Big( \bar{\lambda} (\bar{\mu} - \underline{\mu}) - \rho \underline{\mu}\Big) (1- x_{i}(t,\vp)) - \rho \bar{\mu} x_{i}(t,\vp) + c_i x_{i}(t,\vp)  + \frac{ d H_k (\vx(t,\vp))  }{dt}  \Bigg\},
\end{aligned}
\end{align}
where 
$\bar{\lambda} \triangleq \max_{i \in E} \lambda_i $. Moreover, using \eqref{derivative:h-x} it can be shown that for $k \in \mathcal{A}^*$ we have
\begin{align}
\label{upper-bound-on-deriv-h-x}
\begin{aligned}
\frac{ d H_k (\vx(t,\vp))  }{dt}  &= \sum_{j \in A} x_j (t,\vp) \sum_{l \in E} \mu_{k,l}  q_{jl}   - \sum_{l \in E} \mu_{k,l}  \lambda_l  x_l (t,\vp)  + \sum_{l \in E} \mu_{k,l}  x_l (t,\vp)   \sum_{j \in A} \lambda_{j}  x_j (t,\vp)
\\
&\le n \bar{\mu} \left(\max_{l,j} | q_{lj} | \right) \Big(1-x_{i}(t,\vp) \Big)  - \sum_{l \ne i} \mu_{k,l}  \lambda_l  x_l (t,\vp)  +   \bar{\mu} x_{i} (t,\vp)  \sum_{j \ne k} \lambda_{j}  x_j (t,\vp)
\\
&\qquad \quad + \lambda_{i} x_{i} (t,\vp) \sum_{l \ne i} \mu_{k,l}  x_l (t,\vp)
+ \left(    \sum_{l \ne i} \mu_{k,l}  x_l (t,\vp) \right) \left(   \sum_{j \ne k} \lambda_{j}  x_j (t,\vp) \right)
\\
&\le \Big(1-x_{i}(t,\vp) \Big) \cdot \left( 3 \cdot  \bar{\mu}  \cdot  \bar{\lambda} +   n \cdot \left(\max_{l,j} | q_{lj} | \right)  \cdot \bar{\mu}   \right)
\end{aligned}
\end{align}
where the second line follows from the inequality $  \sum_{l \in E} \mu_{k,l} \,  q_{ l l } \le 0$ (recall that $\bar{\mu} = \mu_{k,i} = \max_{k,l} \mu_{k,l}$ and $q_{ii} = - \sum_{i \ne i} q_{i i}$).
The equations \eqref{upper-bound-on-deriv-K} and \eqref{upper-bound-on-deriv-h-x} then imply that for $t < T( \vp, \hat{\pi}_{i} ) $, and for $k \in \mathcal{A}^*$;
\begin{align}
\label{upper-upper-bound-on-deriv-K}
\frac{ d F_k(t,\vp) }{dt} \le  \E^{\vp}  \left[ e^{- I(t)- \rho t } \right]
 \cdot  \Bigg\{   - \rho \bar{\mu} x_{i}(t,\vp) + c_i x_{i}(t,\vp)  + \Big(1- x_{i}(t,\vp) \Big) \cdot  G  \Bigg\}.
\end{align}
where
$ G \triangleq 4 \cdot  \bar{\mu}  \cdot  \bar{\lambda} +   n \cdot \left(\max_{l,j} | q_{lj} | \right)  \cdot \bar{\mu}   - ( \rho + \bar{\lambda})  \cdot \underline{\mu}.
$ 
Note that the assumption '$\rho > 0$ or $c_i>0$' in Lemma~\ref{lem:nonempty-acceptance-region} assures that $dF_k(t,vp) / dt \big|_{t=0}$ is negative as $\pi_i \to 1$. Therefore, if we define
\begin{align*}
\hat{ \hat{\pi} }_{i} \triangleq \max \left\{  \hat{\pi}_{i} \, , \, \frac{G  }{\rho \bar{\mu} - c_i  + G} \right\}
= \max \left\{  \hat{\pi}_{i} \, , \, \frac{4   \bar{\mu}    \bar{\lambda} +   n  \left(\max_{l,j} | q_{lj} | \right)   \bar{\mu}   - ( \rho + \bar{\lambda})    \underline{\mu}  }{ - c_i+  n \bar{\mu}\left(\max_{l,j} | q_{lj} | \right) + 3  \bar{\lambda} \bar{\mu}+ (\bar{\mu} - \underline{\mu}) (\rho +  \bar{\lambda} )} \right\} < 1,
\end{align*}
we have $d F_k(t,\vp) / dt \le 0 $ on $t \in [0, T( \vp, \hat{\pi}_n )]$ for all $k \in A^*$ and for all $\vp $ such that $\pi_{i} > \hat{ \hat{\pi} }_{i}$. This implies that $JH(t,s, \vp) \le H(\vp)$ on this region.

\noindent \textbf{Part II:} Next, let $T( \vp, \hat{ \hat{\pi} }_{i} )$ be the hitting time of the deterministic path $x_{i}(t,\vp)$ to the level $\hat{ \hat{\pi} }_{i}$. Below we show that there exists $\pi^{s}_{i}$ such that
\begin{align}
\label{remaining-inequalities}
F_k (t , \vp) \le \E^{\vp}  \left[ \int_0^{t \wedge \sigma_1} e^{ - \rho t }c\, dt +
e^{ - \rho t \wedge \sigma_1 }\bar{\mu} \right] \le \bar{\mu} \,  \pi^{s}_{i} + m (1 - \pi^{s}_{i}) \le H(\vp)
\end{align}
for all $k \in \mathcal{A}$ (not just $\mathcal{A}^*$) and for all $t \ge T( \vp, \hat{ \hat{\pi} }_{i} )$ on the region $\{  \vp \in D ; \,  \pi_{i} \ge  \pi^{s}_{i}\}$. This will further imply that $JH(t,s, \vp) \le H(\vp)$ for all $t \ge 0$ for a point $\vp$ falling on the latter region, and we will have $H(\vp) \le J_0H(s, \vp) = \sup_{t \in [0,s] } JH(t,s, \vp) \le H(\vp)$.

Note that the first inequality in \eqref{remaining-inequalities} follows from $C(\cdot) \le  c $ and $H(\cdot) \le \bar{\mu}$. For a given value $\pi^{s}_{i}$ the last inequality is true for all the points on $\{  \vp \in D ; \,  \pi_{i} \ge  \pi^{s}_{i}\}$ since
\begin{align*}
H(\vp) = \sup_{k \in A^*} H_k(\vp) = \bar{\mu} \pi_{i} +  \sup_{k \in A^*} \sum_{i \ne i} \mu_{k,i} \pi_i \ge
\bar{\mu} \pi_{i}  + m ( 1 - \pi_{i} ) \ge \bar{\mu} \, \pi^{s}_{i}  + m ( 1 -\pi^{s}_{i} ).
\end{align*}
Hence it remains to show that the second inequality holds for some $\pi^{s}_{i}$.

For $\pi_{i} > \hat{ \hat{\pi} }_i $ we have
$
 \hat{ \hat{\pi} }_{i} = \pi_{i} + \int_0^{T( \vp, \hat{ \hat{\pi} }_{i} )}  \frac{ d ( x_{i} (t,\vp)) }{dt} \, dt.
$ 
Then, thanks to \eqref{eq:vx-dyn} we get $0 \ge \hat{ \hat{\pi} }_{i} - \pi_{i} =$
\begin{align*}
  \int_0^{T( \vp, \hat{ \hat{\pi} }_{i} )} \left( \sum_{j \in E} q_{ji}  x_j (t,\vp) - \lambda_{i}  x_{i} (t,\vp)  + x_{i} (t,\vp)   \sum_{j \in E} \lambda_{j}  x_j (t,\vp) \right)  dt
\ge  \int_0^{T( \vp, \hat{ \hat{\pi} }_{i} )} \Big(  q_{ii}  - \lambda_{i}   \Big)   dt
\end{align*}
$= \Big( q_{ii}  - \lambda_{i}  \Big) \cdot T( \vp, \hat{ \hat{\pi} }_{i} )$, which further implies
\begin{align}
\label{exit-time-lower-bond}
T( \vp, \hat{ \hat{\pi} }_{i} ) \ge (\pi_{i} - \hat{ \hat{\pi} }_{i} ) / (- q_{ii}  - \lambda_{i}      ) .
\end{align}

\noindent \textbf{Case I: $\rho > 0$.}
By \eqref{exit-time-lower-bond} we get the inequality
$\E^{\vp} \exp\left(- \rho \cdot T( \vp, \hat{ \hat{\pi} }_{i} ) \wedge \sigma_1 \right) \le$
\begin{align*}
 \E^{\vp} \exp\left(- \rho \Bigl[  \frac{ \pi_{i} - \hat{ \hat{\pi} }_{i} }{ - q_{ii}  + \lambda_{i}  } \,  \wedge \sigma_1\Bigr]\right)
 = \int_0^{\infty} \exp\left(- \rho \Bigl[  \frac{ \pi_{i} - \hat{ \hat{\pi} }_{i} }{ - q_{ii}  + \lambda_{i}  } \, \wedge u \Bigr]  \right) \sum_{i \in E} \E^{\vp} \left[ 1_{ \{ M_u =i \}} e^{- I(u)} \lambda_i \right] \, du .
\end{align*}
The last expression above is strictly decreasing in $\pi_{i}$ and equals $1$ at $\pi_{i} = \hat{ \hat{\pi} }_{i}$. Moreover the mapping $\pi_{i} \mapsto \bar{\mu} \pi_{i}  + \underline{\mu} (1-\pi_{i} )$ is increasing and equals $\bar{\mu}$ at $\pi_{i} =1$. Therefore there exists a unique $\pi^{s}_{i} \in [\hat{ \hat{\pi} }_{i}, 1)$ defined as
\begin{align}
\label{def:pi-i-star}
\pi^{s}_{i} \triangleq \inf  \left\{ \pi_{i} \ge \hat{ \hat{\pi} }_{i}  \, : \,
 \bar{\mu}\,  \E^{\vp} \exp\left(- \rho \Bigl[  \frac{ \pi_{i} - \hat{ \hat{\pi} }_{i} }{ - q_{ii}  + \lambda_{i}  } \,  \wedge \sigma_1\Bigr]\right)
 \le \bar{\mu} \pi_{i}  + \underline{\mu} (1-\pi_{i} )\right\}
 < 1,
\end{align}
such that the inequality in \eqref{def:pi-i-star} holds for all $\pi_{i} \in [ \pi^{s}_{i}, 1]$. The definition of $\pi^{s}_{i}$ implies that for all the points $\vp$ with $\pi_{i} \ge \pi^{s}_{i}$ and for $t \ge T( \vp, \hat{ \hat{\pi} }_{i} )$ we have
\begin{multline*}
\E  \Bigl[ \int_0^{t \wedge \sigma_1} e^{ - \rho t }c\, dt +
e^{ - \rho t \wedge \sigma_1 } \bar{\mu} \Bigr]
\le
\bar{\mu} \, \E  \left[ e^{ - \rho T( \vp, \hat{ \hat{\pi} }_{i} ) \wedge \sigma_1 }  \right] = \bar{\mu} \, \E  \left[ e^{ - \rho T( \vp, \hat{ \hat{\pi} }_{i} ) \wedge \sigma_1 }  \right] \\
 \le \bar{\mu} \, \E^{\vp} \exp\left(- \rho \Bigl[  \frac{ \pi_{i} - \hat{ \hat{\pi} }_{i} }{ - q_{ii}  + \lambda_{i}  } \,  \wedge \sigma_1\Bigr]\right)
 \le \bar{\mu} \pi_{i}  + \underline{\mu} (1-\pi_{i} ) \le H(\vp) .
\end{multline*}
This establishes \eqref{remaining-inequalities} and concludes the proof when $\rho > 0$.

\noindent \textbf{Case II: $c > 0$.} If $\rho > 0$, arguments given for Case I still holds. Hence we assume that $\rho =0$.
Using \eqref{exit-time-lower-bond} again, we obtain
\begin{align*}
\E^{\vp}  \Bigl[ T( \vp, \hat{ \hat{\pi} }_{i} ) \wedge \sigma_1 \Bigr]   \ge \E^{\vp} \Bigl[  \frac{ \pi_{i} - \hat{ \hat{\pi} }_{i} }{ - q_{ii}  + \lambda_{i}  } \wedge \sigma_1 \Bigr]
= \int_0^{\infty} \! \Bigl[  \frac{ \pi_{i} - \hat{ \hat{\pi} }_{i} }{ - q_{ii}  + \lambda_{i}  } \, \wedge u \Bigr]   \sum_{j \in E} \lambda_j m_j(u,\vp) du.
\end{align*}
The last expression above equals to $0 $ at $\pi_{i} = \hat{ \hat{\pi} }_{i} $ and it is strictly increasing in $\pi_{i} $ for $\pi_{i} \ge \hat{ \hat{\pi} }_{i} $. Therefore there exists a unique point
\begin{align*}
\pi^{s}_{i} \triangleq \inf  \left\{ \pi_{i} \ge \hat{ \hat{\pi} }_{i}  \, : \,
-c \, \E^{\vp} \Bigl[  \frac{ \pi_{i} - \hat{ \hat{\pi} }_{i} }{ - q_{ii}  + \lambda_{i}  } \wedge \sigma_1 \Bigr] + \bar{\mu}
 \le \bar{\mu} \pi_{i}  + \underline{\mu} (1-\pi_{i} )\right\}
 < 1,
\end{align*}
Then for the points $\vp$ with $\pi_{i} \ge \pi^{s}_{i}$ and for $t \ge T( \vp, \hat{ \hat{\pi} }_{i} )$ we have
\begin{multline*}
\E  \left[ \int_0^{t \wedge \sigma_1} c\,  dt +
\bar{\mu} \right]
=   c \, \E  \left[  t \wedge \sigma_1  \right]   + \bar{\mu}
\le  c \, \E  \left[ T( \vp, \hat{ \hat{\pi} }_{i} ) \wedge \sigma_1 \right]  + \bar{\mu}
  \\
 \le c \, \E^{\vp} \Bigl[  \frac{ \pi_{i} - \hat{ \hat{\pi} }_{i} }{ - q_{ii}  + \lambda_{i}  } \wedge \sigma_1 \Bigr] + \bar{\mu}
  \le \bar{\mu} \pi_{i}  + \underline{\mu} (1-\pi_{i} ) \le H(\vp),
\end{multline*}
and this concludes the proof.
\end{proof}

\begin{proof}[\textbf{Proof of Lemma~\ref{lem:convergence-of-finite-to-infinite}}]
The first inequality in \eqref{eq:convergence-of-finite-to-infinite}  
is obvious. To show the second inequality let $\tau $ be an $\F$-stopping time.
Then, we have
\begin{multline}
\label{proving-the-second-inequality}
 \E^{\vp} \left[  \int_0^{\tau} e^{- \rho t}k(\vP_t)dt + e^{- \rho \tau} H\left( \vP_{\tau} \right) \right]
\le  \E^{\vp} \left[ \int_0^{\tau \wedge T}  e^{- \rho t}C(\vP_t)dt + e^{- \rho \tau  \wedge T} H( \vP_{\tau \wedge T}  )
\right]
\\
+ \E^{\vp} \left[
 1_{ \{ \tau \ge T \} } \left(\int_T^{\tau} e^{- \rho t}C(\vP_t)dt  + e^{- \rho \tau} H( \vP_{\tau}  )  - e^{- \rho T} H( \vP_{T} ) \right)  \right].
\end{multline}
If $\rho > 0$, the last expectation above is bounded above by
$e^{- \rho T}   ( \|  C \| + 2 \cdot \|  H \|)$. Then taking the supremumover all $\tau$'s on both sides we obtain \eqref{eq:convergence-of-finite-to-infinite}.

On the other hand, if $\rho = 0$ and $ \max_{i \in E} c_i < 0$, we may safely restrict ourselves to the set of stopping times $\tau$ for which
$\E [ \tau ] \le \big( \min_{k,i}\mu_{k,i} - \max_{k,i} \mu_{k,i}\big) / \max_{i \in E} c_i $: the expected reward associated with any stopping time having a higher expected value is dominated by the reward achieved upon stopping immediately. Then, the second expectation in \eqref{proving-the-second-inequality} is bounded above by
\begin{align*}
2 \cdot \|  H \| \cdot \P \{\tau > T \}
\le 2 \cdot \|  H \| \frac{ \E [\tau ]}{T}
\le  \frac{2 \cdot \|  H \|}{T} \, \frac{\big( \min_{k,i}\mu_{k,i} - \max_{k,i} \mu_{k,i} \big)}{ \max_{i \in E} c_i},
\end{align*}
thanks to  Markov's inequality. Then, the inequality in \eqref{eq:convergence-of-finite-to-infinite} follows after taking the supremums over $\tau$ again.
\end{proof}

\begin{proof}[\textbf{Proof of \eqref{explicit-error-for-T-truncated-stopping-time}}]
Let $U^{(m)}_{\varepsilon}$ denote the stopping rule in \eqref{def:eps-stopping-time-for-truncated-inifinite-horizon} 
for notational convenience.
Since $ U^{(m)}_{\varepsilon}  \wedge T  \le U^{(m)}_{\varepsilon}  \le  U_{0} (\infty, \vp)$,
the arguments of \cite[Proposition 3.11 and Section 4.1]{dps} give
%
 \begin{align*}
V(T,\vp) \le V(\infty,\vp)  =  \E^{\vp}\left[
  \int_{0 }^{ U^{(m)}_{\varepsilon} \wedge T } \!\! e^{- \rho t} C(\vP_t ) \,dt +
  e^{-\rho ( U^{(m)}_{\varepsilon} \wedge T) } V \left(\infty, \vP_{U^{(m)}_{\varepsilon} \wedge T} \right) \right].
\end{align*}
On the event $\{ U^{(m)}_{\varepsilon}  \le T \}$, we use the inequality
$V\left( \infty , \vP_{U^{(m)}_{\varepsilon} } \right) - \varepsilon - Err_{\infty}(m) \le H\left( \vP_{U^{(m)}_{\varepsilon} } \right) $, $\P^{\vp}$-a.s., to obtain 
 \begin{align*}
 V(T,\vp) &\le \E^{\vp}\left[
  \int_{0 }^{ U^{(m)}_{\varepsilon}  \wedge T } \!\! e^{- \rho t} C(\vP_t ) \,dt +
  e^{-\rho ( U^{(m)}_{\varepsilon}  \wedge T)} H\left(  \vP_{U^{(m)}_{\varepsilon} \wedge T} \right)
  + \varepsilon + Err_{\infty}(m)   \right.
  \\
  &\hspace{6cm}\left.
  + 1_{ \{ U^{(m)}_{\varepsilon}(\infty, \vp) > T \}  }
    e^{-\rho T} \left[  V \left(\infty, \vP_{ T} \right) - H\left(  \vP_{U^{(m)}_{\varepsilon}  } \right) \right]
    \right]
    \\
    &\le \E^{\vp}\left[
  \int_{0 }^{ U^{(m)}_{\varepsilon}  \wedge T } \!\! e^{- \rho t} C(\vP_t ) \,dt +
  e^{-\rho (U^{(m)}_{\varepsilon}  \wedge T)} H\left(  \vP_{U^{(m)}_{\varepsilon}  \wedge T} \right)
  \right]  \\
  &\hspace{6cm}
   + \varepsilon + Err_{\infty}(m) +   e^{-\rho T}Err_{\infty}(0) \; \P  \{ U^{(m)}_{\varepsilon}  > T \}   .
 \end{align*}
 If $\rho > 0$, we obtain \eqref{explicit-error-for-T-truncated-stopping-time} by removing the last probability. Otherwise we can use Markov's inequality
 $ \P  \{ U^{(m)}_{\varepsilon}  > T \} \le \E [U^{(m)}_{\varepsilon}  / T]
 \le  \E [U^{0}_{\varepsilon}]  / T \le \max_{k,i} \mu_{k,i} / [ ( \min_{i \in E} c_i ) T ]  $ as in the proof of Lemma \ref{eq:convergence-of-finite-to-infinite}, and \eqref{explicit-error-for-T-truncated-stopping-time} follows.
 \end{proof}

\begin{proof}[\textbf{Proof of Remark~\ref{rem:two-hypothesis}}] The first claim on immediate stopping if $ \mu_{2,1} \mu_{1,2} (\lambda_2 - \lambda_1 ) \le \mu_{2,1} + \mu_{1,2}$ is an immediate corollary of \cite[Theorem 2.1]{PeskirShiryaev}.

Let us now assume that $ \mu_{2,1} \mu_{1,2} (\lambda_2 - \lambda_1 ) > \mu_{2,1} + \mu_{1,2}$. For the problem with two hypotheses, we have $H(\vp) = \min\{  \mu_{1,2} \pi_2 \, ; \,  \mu_{2,1} \pi_1 \}$, and recall that 
$v_1(T,\vp) = \inf_{t \in [0,s] }
JH(t,\vp)$. 
For $\vp = (\pi_1, \pi_2)$ with $ \pi_2 \in \big( \lambda_1 \mu_{2,1} /(\lambda_2 \mu_{1,2} + \lambda_1 \mu_{2,1}) \, , \, \mu_{2,1} / ( \mu_{2,1} +\mu_{1,2} ) \big) $ 
and for small $t>0$, evaluating the expression $ JH(t, \vp)  $ gives
\begin{align*}
 \left[ \pi_1 e^{- \lambda_1 t} + \pi_2 e^{- \lambda_2 t}\right] \mu_{1,2} x_2(t,\vp)
 + \int_0^t \sum_{j =1}^2 \pi_i e^{- \lambda_i u} \left( 1 + \lambda_j  \left(
\mu_{2,1} \frac{\lambda_1 x_1(u,\vp) }{\lambda_1 x_1(u,\vp) + \lambda_2 x_2(u,\vp)}
 \right) \right)
du,
\end{align*}
and using the dynamics of $t \mapsto \vx(t, \vp)$ in \eqref{eq:vx-dyn} we obtain
\begin{align}
\label{eq:dJH-example} \frac{d JH(t ,\vp)}{dt} =  \left[ 1 + \mu_{2,1} \lambda_1 \right]
\cdot \pi_1 e^{- \lambda_1 t } +  \left[ 1 - \mu_{1,2} \lambda_2 \right] \cdot \pi_2  e^{-
\lambda_2 t } . 
\end{align}
With $t=0$ and $\vp = (  \mu_{1,2} / ( \mu_{2,1} +\mu_{1,2} ) + \delta ,  \mu_{2,1} / ( \mu_{2,1} +\mu_{1,2} ) - \delta ) $, for $\delta > 0$ small, the derivative becomes
\begin{align*}
\frac{d JH(t, \vp)}{dt}\Big|_{t =0, \, \vp = (\cdot, \cdot)} =
\frac{1}{ \mu_{2,1} +\mu_{1,2}  } \left[ \mu_{2,1} +\mu_{1,2}  + \mu_{2,1} \mu_{1,2} (\lambda_1 - \lambda_2) \right] + \delta (\mu_{2,1} \lambda_1 + \mu_{1,2} \lambda_2) .
\end{align*}
Under the assumption $ \mu_{2,1} \mu_{1,2} (\lambda_2 - \lambda_1 ) > \mu_{2,1} + \mu_{1,2}$,  the last expression is negative for $\delta$ sufficiently small. This implies that $v_1(T, \vp) < H(\vp)$ for small values of $T>0$ at points $\vp $, for which
$\pi_2 = \mu_{2,1} / ( \mu_{2,1} +\mu_{1,2} ) - \delta$ where
\begin{align*}
 \delta <  \frac{  \mu_{2,1} \mu_{1,2} (\lambda_2 - \lambda_1 ) - \mu_{2,1} - \mu_{1,2}  }{ (\mu_{2,1} +\mu_{1,2}) \, ( \mu_{2,1} \lambda_1 + \mu_{1,2} \lambda_2)}.
\end{align*}
%
%
Since $b_1(0) = \mu_{2,1} / ( \mu_{2,1} +\mu_{1,2} )$, it follows that the boundary curve $T \mapsto b_1(T) $ is discontinuous at $T=0$ (see the lower curve in Figure~\ref{fig:hypothesis}).

The expression in \eqref{eq:dJH-example} with $t=0$ indicates that $d JH(t,\vp) / dt|_{t=0}$ is decreasing in $\pi_2$ and vanishes at the point $\vp$ with
\begin{align*}
 \pi_2 = \frac{ 1 + \mu_{2,1}\lambda_1 }{  \mu_{2,1}\lambda_1 + \mu_{1,2}\lambda_2 }
\le \frac{ \mu_{2,1} }{  \mu_{2,1} + \mu_{1,2}},
\end{align*}
where the inequality is due to the assumption $ \mu_{2,1} \mu_{1,2} (\lambda_2 - \lambda_1 ) > \mu_{2,1} + \mu_{1,2}$. This implies that
\begin{align*}
 \left\{ (T,\vp) : \; \pi_2 \le  \frac{ \mu_{2,1} }{  \mu_{2,1} + \mu_{1,2}} \; \text{and }
\; V_1 (T,\vp) = H(\vp) \right\} \subseteq
 \left\{ (T,\vp) : \pi_2 \le  \frac{ 1 + \mu_{2,1}\lambda_1 }{  \mu_{2,1}\lambda_1 + \mu_{1,2}\lambda_2 }    \right\}
\end{align*}
At the point $\vp $ with $\pi_2 = (1 + \mu_{2,1}\lambda_1) / ( \mu_{2,1}\lambda_1 + \mu_{1,2}\lambda_2)$ the expression for $d JH(t,\vp) / dt  $ in \eqref{eq:dJH-example} is strictly positive for small $t > 0$. Then, we can find a value of $u > 0$ such that
\begin{align*}
v_1(T, \vp ) = H(\vp ), \quad \text{for $\vp = \left( \frac{\mu_{1,2}\lambda_2}{\mu_{2,1}\lambda_1 + \mu_{1,2}\lambda_2} , \frac{1 + \mu_{2,1}\lambda_1}{ \mu_{2,1}\lambda_1 + \mu_{1,2}\lambda_2 } \right)$ and $T \in [0,u]$.}
\end{align*}
This further implies
\begin{align*}
v_1(T ,\vp ) = H(\vp)
\quad \text{on} \quad \left\{ (T,\vp) \, : \, T\in [0,u] \; \; \text{and}\; \;  \pi_2 \le \frac{1 + \mu_{2,1}\lambda_1}{ \mu_{2,1}\lambda_1 + \mu_{1,2}\lambda_2 } \right\},
\end{align*}
since
the region $\{ \vp \in D : V(T,\vp) = H(\vp) \}$ is convex for each $T$ (see Remark~\ref{lem:convexAdoption}),
and we have $v_1(T ,\vp ) = H(\vp)$, for all $T>0$ at $\vp = (1,0)$.
Recall that the deterministic part $t \mapsto \vx(t,\pi)$ drifts towards the point $(1,0)$. Then, by induction we conclude that $v_n(T ,\vp ) = H(\vp)$ for all $n \in \N$, which implies that $\lim_{n \to \infty}v_n(T ,\vp )= V(T ,\vp ) = H(\vp)$ on the same region.

As a result, we see that if the solution of the problem is not trivial,
the lower boundary curve $b_1(T)$ is discontinuous at $T=0$, and there is an initial region over which the curve stays flat at level $\pi_2 = (1 + \mu_{2,1}\lambda_1) / (\mu_{2,1}\lambda_1 + \mu_{1,2}\lambda_2)$ as in Figure~\ref{fig:hypothesis}.
\end{proof}

%
\bibliography{LS-references}
\bibliographystyle{siam}
\end{document}